\documentclass[final]{article}
\usepackage{amsmath,amssymb,dsfont}
\usepackage{graphicx,color}
\usepackage{enumitem, xspace, float}

\setlength{\oddsidemargin}{0.2in}
\setlength{\evensidemargin}{0mm}
\setlength{\topmargin}{5mm}
\setlength{\textheight}{8in}
\setlength{\textwidth}{6in}

\parindent 10pt
\parskip5pt

\newcommand{\commentout}[1]{}

\newtheorem{theorem}{Theorem}[section]
\newtheorem{lemma}[theorem]{Lemma}
\newtheorem{corollary}[theorem]{Corollary}
\newtheorem{proposition}[theorem]{Proposition}
\newtheorem{definition}[theorem]{Definition}
\newtheorem{remark}[theorem]{Remark}

\newenvironment{proof}{\medskip\textit{Proof ---\;}}{\\ 
    \null\  \hfill\textbf{Q.E.D.}\medskip}
 
\def\R{\mathbb R}

\def\eps{\varepsilon}

\def\e{\varepsilon}

\newcommand{\ds}{\,\mathrm{d}s}
\newcommand{\dt}{\,\mathrm{d}t}




\def\HRL{{\displaystyle{\mathop{\scriptstyle{(y,s)\to
(x,t)}}_{\varepsilon\to 0}}}}
\def\limssup{\mathop{\rm lim\,sup\!^*\,}}
\def\limiinf{\mathop{\rm lim\,inf_*\,}}

\def\eps{\varepsilon}

\def\xe{x_\eps}
\def\te{t_\eps}

\def\X{X}

\newcommand{\Q}[1]{{Q}^{(#1)}}
\DeclareMathOperator{\dist}{\mathrm{dist}}







\newcommand{\Man}[1]{\mathbf{M}^{#1}}

\renewcommand{\d}{\mathrm{d}}

\newcommand{\hyp}[1]{$(\mathbf{H}_{#1})$}

\newcommand{\BL}{\mathbf{BL}}
\newcommand{\B}{\mathbf{B}}
\renewcommand{\L}{\mathbf{L}}

\newcommand{\cT}{\mathcal{T}}
\renewcommand{\Q}[1]{$\mathcal{Q}(#1)$}

\newcommand{\AFS}{{\rm (AFS)}\xspace}
\newcommand{\LAHF}{{\rm (LAHF)}\xspace}
\newcommand{\AHG}{{\rm (AHG)}\xspace}
\newcommand{\LCP}{\mathrm{LCP}}
\newcommand{\HJBSD}{{\rm (HJB-SD)}\xspace}
\newcommand{\RS}{{\rm (RS)}\xspace}

\newcommand{\TC}{{\bf (TC)}\xspace}
\newcommand{\NC}{{\bf (NC)}\xspace}
\newcommand{\LP}{{\bf (LP)}\xspace}

\newcommand{\TCBL}{{\bf (TC-BL)}\xspace}
\newcommand{\NCBL}{{\bf (NC-BL)}\xspace}
\newcommand{\M}{\mathbb{M}}

\newcommand{\toRS}{\mathop{\longrightarrow}\limits^{RS}}


\newenvironment{enum}{
    \begin{enumerate}[label=$(\roman *)$, topsep=0pt, leftmargin=4em, 
            itemsep=0pt]}{\end{enumerate}\medskip}

\newcommand{\distH}{\dist_\mathrm{H}}

\begin{document}

    \title{(Almost) Everything You Always Wanted to Know About Deterministic
        Control Problems in Stratified Domains} 
    
    \author{G. Barles \& E. Chasseigne
    \thanks{Laboratoire de Math\'ematiques et
            Physique Th\'eorique (UMR CNRS 7350), F\'ed\'eration Denis Poisson
            (FR CNRS 2964), Universit\'e Fran\c{c}ois Rabelais, Parc de
            Grandmont, 37200 Tours, France. Email:
            Guy.Barles@lmpt.univ-tours.fr,
            Emmanuel.Chasseigne@lmpt.univ-tours.fr . }
    \thanks{This work was partially supported by the ANR HJnet ANR-12-BS01-0008-01 and by EU under
  the 7th Framework Programme Marie Curie Initial Training Network
  ``FP7-PEOPLE-2010-ITN'', SADCO project, GA number 264735-SADCO.} 
    }

\date{}
\maketitle

\begin{abstract}
    \noindent We revisit the pioneering work of Bressan \& Hong on
    deterministic control problems in stratified domains, \textit{i.e.}  control problems
    for which the dynamic and the cost may have discontinuities on submanifolds
    of $\R^N$. By using slightly different methods, involving more partial
    differential equations arguments, we $(i)$ slightly improve the assumptions
    on the dynamic and the cost; $(ii)$~obtain a comparison result for general
    semi-continuous sub and supersolutions (without any continuity assumptions
    on the value function nor on the sub/supersolutions); $(iii)$ provide a general
    framework in which a stability result holds.  
\end{abstract}

\noindent {\bf Key-words}: Optimal control, discontinuous dynamic, Bellman
Equation, viscosity solutions.  \\
{\bf AMS Class. No}:
49L20,   
49L25,   
35F21.   

\section{Introduction}

In a well-known pioneering work, Bressan \& Hong \cite{BH} provide a rather
complete study of deterministic control problems in stratified domains, i.e.
control problems for which the dynamic and the cost may have discontinuities on
submanifolds of $\R^N$. In particular, they show that the value-function
satisfies some suitable Hamilton-Jacobi-Bellman (HJB) inequalities (in the viscosity
solutions' sense) and were able to prove that, under certain conditions, one has
a comparison result between sub and supersolutions of these HJB equations,
ensuring that the value function is the unique solution of these equations.

The aim of this article is to revisit this work by $(i)$ slightly improving the
assumptions on the dynamic and the cost, in a (slightly) more general
framework; $(ii)$ obtaining a comparison result for general semi-continuous sub
and supersolutions 
(while in \cite{BH} the subsolution has to be H\"older
continuous, and this turns into an a priori assumption on the value function that
we do not need here)
; $(iii)$ providing a general (and to our point of view,
natural) framework in which a stability result holds. 

In order to be more specific, even if we are not going to enter into details in
this introduction, the first key ingredient is the ``stratification'', namely
writing the whole space as $$\R^N=\Man{0}\cup\Man{1}\cup\dots\cup\Man{N}\; ,$$
where, for each $k=0\dots N$, $\Man{k}$ is a $k$-dimensional embedded
submanifolds of $\R^N$, the $\Man{k}$ being disjoint. The reader may consider
that the $\Man{k}$ are the subsets of $\R^N$ where the dynamic and cost have
discontinuities, which may also mean that, as in \cite{BH}, on certain
$\Man{k}$, there is a specific control problem whose dynamic and cost have
nothing to do with the dynamic and cost outside $\Man{k}$. But as in our
previous papers in collaboration with A.~Briani dedicated to co-dimension 1 type
discontinuities \cite{BBC1,BBC2}, part of the dynamic and cost on $\Man{k}$ is
some kind of ``trace'' of the dynamic and cost outside $\Man{k}$. In \cite{BH},
the regularity imposed on the $\Man{k}$ is $C^{1}$, while in our case it depends
on the controllability of the system: $C^1$ is the controllable case,
$W^{2,\infty}$ otherwise. This additional regularity may be seen as the price to
pay for having no continuity assumption on either the value function nor the
subsolutions for obtaining the equation and proving the comparison result.

The next ingredient is the control problem, \textit{i.e.} the dynamic and cost. Here we
are not going to enter at all into details but we just point out key facts.
First, contrarily to \cite{BH}, we use a general approach through differential
inclusions and we do not start from dynamic $b_k$ and cost $l_k$ defined on
$\Man{k}$. This may have the disadvantage to lead to a more difficult checking
of the assumptions in the applications but, for example, since most of our
arguments are local, the global Lipschitz assumption on the $b_k$ can be reduced
to a locally Lipschitz one. But the most interesting feature are the
controllability assumptions ---and we hope to convince the reader that they are
natural: for each $k$, we assume that the system is controllable w.r.t. the
normal direction(s) of $\Man{k}$ in a neighborhood of each $\Man{k}$, while the
dynamic and cost should also satisfy some continuity assumptions in the tangent
direction(s). This controllability assumption has a clear interpretation: if,
in a neighborhood of $\Man{k}$, the controller wants to go to $\Man{k}$, then he
can do it, and in the same way he can avoid $\Man{k}$ if this is its choice.
This avoids useless discontinuities (which are not ``seen'' by the
system) and cases where the value functions have discontinuities. We point out
that this normal controllability is a key assumption to prove that the value
function satisfies the right HJB inequalities {\em without assuming a priori
that it is continuous} but also it is a key argument in the comparison and 
stability results as this was already the case in \cite{BBC2}.

Except our slightly different approach, the viscosity sub and supersolutions
inequalities are the same as in \cite{BH}, even if the formulation coming from
the differential inclusion and the set-valued maps for the dynamic and cost
changes a litle bit the form of the Hamiltonians. The next step is more important
since it concerns the comparison of {\em any semi-continuous} sub and
supersolutions: here our proof differs from \cite{BH} since it involves more
partial differential equations (pde for short) arguments and less control ones.
A key step, already used but not in a such systematic way in \cite{BBC2}, is to
completely localize the comparison result, \textit{i.e.} to reduce to the proof of
comparison results in (small) balls. Once this is done, the assumptions on the
$\Man{k}$ allow to reduce the case when they are just affine subspaces and the
key arguments of \cite{BBC2} can be applied (regularisation in the tangent
directions to $\Man{k}$ combined by a key control-pde lemma). It is worth
pointing out anyway that, as in \cite{BH}, the proof is done by induction on the
codimension of the encountered discontinuities: local comparison in $\Man{N}$,
then successively in $\Man{N}\cup\Man{N-1}$,
$\Man{N}\cup\Man{N-1}\cup\Man{N-2}$, ..., the previous comparison result
providing the key argument for the next step. We refer to the
    beginning of Section~\ref{CUCVF} for a more explicit exposition of the
    induction argument.

Finally we provide the stability result, which extends the one proved in
\cite{BBC2} to the more complicated framework we have here but the idea remains
the same: roughly speaking, the normal controllability implies that the
half-relaxed limits on $\Man{k}$ can be computed by using only the restrictions
of the functions on $\Man{k}$, allowing to pass to the limit on the specific
inequalities on $\Man{k}$ (in particular for the subsolutions).

Recently, control problems in either discontinuous coefficients situations or in
stratified domains or even on networks have attracted more and more attention.
Of course, we start by recalling the pioneering work by Dupuis \cite{Du} who
constructs a numerical method for a calculus of variation problem with
discontinuous integrand. Problems with a discontinuous running cost were
addressed by either Garavello and Soravia \cite{GS1,GS2}, or Camilli and
Siconolfi \cite{CaSo} (even in an $L^\infty$-framework) and Soravia \cite{So}.
To the best of our knowledge, all the uniqueness results use a special structure
of the discontinuities as in \cite{DeZS,DE,GGR} or an hyperbolic approach as in
\cite{AMV,CR}. More in the spirit of optimal control problem on stratified
domains are the ones of Barnard and Wolenski \cite{BaWo} (for flows
invariances), Rao and Zidani \cite{RaZi} and Rao, Siconolfi and Zidani
\cite{RaSiZi} who proved comparison results but with more restrictive
controlability assumptions and without the stability results we can provide. For
problems on networks which partly share the same kind of difficulties, we refer
to Y. Achdou, F. Camilli, A. Cutri, N.  Tchou\cite{ACCT}, C. Imbert, R. Monneau,
and H. Zidani \cite{IMZ}, F.~Camilli and D.~Schieborn \cite{ScCa} and C. Imbert
and R. Monneau \cite{IM-ND,IM} where more and more pde methods are used, instead of
control ones. A multi-dimensional version (ramified spaces) for Eikonal type
equations is given F.~Camilli, C.  Marchi and D.~Schieborn \cite{CaMaSc} and for
more general equations in C.~Imbert and R. Monneau \cite{IM-ND}.

We end this introduction by mentioning that this paper is
    focused on the specific difficulties related to stratified domains. Hence, we
    assume that the reader is familiar with the theory of deterministic control
    problems, including the approach through differential inclusions and the connections with
    HJB equations using viscosity solutions. Good references on this subject are \cite{AF}, \cite{BCD} and
    \cite{fs}. Let us also recall that, as was said above, we derive here
    a general (theoretical) framework. In a forthcoming paper we will
    treat several specific examples and show how the present framework applies.

The article is organized as follows: in Section~\ref{CPSD}, we describe the
control problem in a full generality; this gives us the opportunity to provide
all the notations and recall well-known general results which are useful in the
sequel (in particular the results related to supersolutions properties). Then we
have to revisit the notion of stratification and we take this opportunity to
introduce the assumptions we are going to use throughout this article
(Section~\ref{AS}). Section~\ref{CHJB} contains the (subsolutions) properties
which are specific to this context. Then we address the question of the
comparison result (Section~\ref{CUCVF}), reducing first to the case of the
comparison in (small) balls which allows to flatten the submanifolds $\Man{k}$.
Section~\ref{NESR} is devoted to the stability result and we conclude the
article with a section collecting typical examples and extensions.

\

\noindent\textbf{TERMINOLOGY ---}

\begin{tabular}{ll}
\AFS & Admissible Flat Stratification\\
\HJBSD & Hamilton-Jacobi-Bellman in Stratified Domains\\ 
\AHG & Assumptions on the Hamiltonian in the General case\\
\LAHF & Local Assumptions on the Hamiltonians in the Flat case\\
\RS & Regular Stratification\\
\TC & Tangential Continuity\\
\NC & Normal Controllability\\
\LP & Lipschitz Continuity\\
$\LCP(\Omega)$ & Local Comparison Result in $\Omega$\\
$\ \,\M$ & a general regular stratification of $\R^N$
\end{tabular}

\section{Control Problems on Stratified Domains (I):\\
    Generalities or what is always true}\label{CPSD}

In this section, we consider control problems in $\R^N$ where the dynamics and
costs may be discontinuous on the collection of submanifolds $\Man{k}$ for
$k<N$. In this first part, we describe the approach using differential
inclusions and we recall all the properties of the value-function which are {\em
always true}, \textit{i.e.} results where the structure of the stratification does
not play any role. This is the case for all the supersolution type properties of
the value function. This part is essentially expository and is kept here in
order to have a self-contained article for the reader's convenience.
On the contrary, the subsolution's properties of the value function are more
specific and described in Section~\ref{CHJB}.

We first define a general control problem associated to a differential
inclusion. As we mention it above, at this stage, we do not need any particular
assumption concerning the structure of the stratification, nor on the control
sets.

\bigskip

\noindent\textsc{Dynamics and costs ---} We treat them both at the same time by
embedding the cost in the differential inclusion we solve below. We denote by
$\mathcal{P}(E)$ the set of all subsets of $E$.

\noindent\hyp{\mathbf{\BL}} We are given a set-valued maps
$\BL:\R^N\times[0,T]\to\mathcal{P}(\R^{N+1})$ satisfying
\begin{enum}
\item The map $(x,t)\mapsto\BL(x,t)$ has compact, convex images
      and is upper semi-continuous;
  \item There exists $M>0$, such that for any $x\in\R^N$ and $t> 0$,
      $$\BL(x,t)\subset \big\{(b,l)\in\R^N\times\R:|b|\leq M; | l | \leq
          M\big\}\,,$$
  \end{enum}
where $|\cdot|$ stands for the usual euclidian norm in $\R^N$ (which reduces to
the absolute value in $\R$, for the $l$-variable). If $(b,l) \in \BL(x,t)$, 
$b$ corresponds to the dynamic and $l$ to the running cost, and Assumption \hyp{\BL}-$(ii)$ means
that both the dynamics and running costs are uniformly bounded.
In the following, we sometimes have to consider separately dynamics and running costs and to do so, we set
$$\B(x,t)=\big\{b \in\R^N;\ \hbox{there exists $l\in\R$ such that  } (b,l)\in\BL(x,t) \big\}\; ,$$
and analogously for $\L(x,t)\subset\R$.

We recall what upper semi-continuity means here: 
a set-valued map $x\mapsto F(x)$ is upper-semi continuous at $x_0$ if for any
open set $\mathcal{O}\supset F(x_0)$, there exists an open set $\omega$
containing $x_0$ such that $F(\omega)\subset\mathcal{O}$. In other terms, 
$F(x)\supset \limsup_{y\to x} F(y)$.

\bigskip

\noindent\textsc{The control problem ---} as we said, we embed the accumulated cost
in the trajectory by solving a differential inclusion in $\R^N\times\R$: we look
for trajectories $(X,L)(\cdot)$ of the following inclusion
$$\frac{\d}{\dt}(X,L)(s)\in\BL\big(X(s),t-s\big)\ \text{ for a.e. }s\in[0,t)\,, 
\quad \text{and }(X,L)(0)=(x,0)\,.$$
Under \hyp{\BL}, it is well-known that given $(x,t)\in\R^N\times (0,T]$,
there exists a Lipschitz function $(X,L):[0,t]\to\R^N\times\R$ which is a
solution of this differential inclusion. To simplify, we just use the notation
$X,L$ when there is no ambiguity but we may also use the notations
$X_{x,t},L_{x,t}$ when the dependence in $x,t$ plays an important role.
If $(X,L)$ is a solution of the differential inclusion, we have for almost any
$s\in(0,t)$, $(\dot X,\dot L)(s)=(b,l)(s)$
for some $(b,l)(s)\in\BL(X(s),t-s)$. 
However, throughout the paper we prefer to write it this way 
$$
\begin{aligned}
\dot X(s) &= b\big(X(s),t-s\big)\\
\dot L(s) &= l\big(X(s),t-s\big)
\end{aligned}
$$
in order to remember that both $b$ and $l$ correspond to a specific choice in
$\BL(X(s),t-s)$.

Then, we introduce the value function
$$U(x,t)=\inf_{(\X,L)\in\cT(x,t)}\Big\{\int_0^t 
     l\big(\X(s),t-s\big)\dt+g\big(\X(t)\big)\Big\}\,,
$$
where $\cT(x,t)$ stands for all the Lipschitz trajectories $(\X,L)$ of the
differential inclusion which start at $(x,0)$ and the function $g:\R^N\to\R$
is the final cost. We assume throughout the paper that $g$ is
bounded and uniformly continuous.

A key standard result is the 
\begin{theorem}\label{DPP} {\bf (Dynamic Programming Principle)} 
    Under Assumptions \hyp{\BL}, the value-function $U$ satisfies
$$U(x,t)=\inf_{(\X,L)\in\cT(x,t)}\Big\{\int_0^\tau 
    l\big(\X(s),t-s\big)\dt+U \big(\X(\tau),t-\tau\big)\Big\}\,,$$
for any $(x,t)\in\R^N\times(0,T]$, $0<\tau < t$.
\end{theorem}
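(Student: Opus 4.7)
The plan is to prove the two inequalities $U(x,t) \leq \inf\{\cdots\}$ and $U(x,t) \geq \inf\{\cdots\}$ separately, using the familiar concatenation/restriction arguments for trajectories of the differential inclusion. The only structural facts I would need are: (a) existence of trajectories on any interval starting from any point (already used to define $U$), (b) the fact that if $(X_1,L_1)\in\cT(x,t)$ and $(X_2,L_2)\in\cT(X_1(\tau),t-\tau)$, then their time-concatenation lies in $\cT(x,t)$, and (c) the restriction property: if $(X,L)\in\cT(x,t)$ then $s\mapsto (X(s+\tau)-X(\tau),L(s+\tau)-L(\tau))$ defines an element of $\cT(X(\tau),t-\tau)$. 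Properties (b) and (c) are immediate from the definition of the differential inclusion: the drift in $\BL(X(s),t-s)$ depends on position and the remaining time, so the time-shift $s\mapsto s+\tau$ perfectly matches the rescaling of the second leg. Hypothesis \hyp{\BL}, together with standard upper semi-continuity/convex-valued selection theorems, guarantees that concatenation preserves membership in $\cT$.

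For the inequality $U(x,t) \leq \inf\{\cdots\}$, I fix $\tau\in(0,t)$, an arbitrary $(X,L)\in\cT(x,t)$, and $\eta>0$. By definition of $U$ at $(X(\tau),t-\tau)$, there exists $(\tilde X,\tilde L)\in\cT(X(\tau),t-\tau)$ with
\[
  \int_0^{t-\tau} \tilde l\bigl(\tilde X(s),t-\tau-s\bigr)\,\mathrm{d}s + g\bigl(\tilde X(t-\tau)\bigr) \leq U\bigl(X(\tau),t-\tau\bigr)+\eta.
\]
Define the concatenated trajectory $\hat X(s)=X(s)$ for $s\in[0,\tau]$ and $\hat X(s)=\tilde X(s-\tau)$ for $s\in[\tau,t]$, together with the analogous concatenation $\hat L$. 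By (b), $(\hat X,\hat L)\in\cT(x,t)$, and splitting the cost integral at $\tau$ together with the change of variables $s'=s-\tau$ gives
\[
  U(x,t) \leq \int_0^\tau l\bigl(X(s),t-s\bigr)\,\mathrm{d}s + U\bigl(X(\tau),t-\tau\bigr)+\eta.
\]
Taking the infimum over $(X,L)\in\cT(x,t)$ and letting $\eta\to 0$ yields the desired inequality.

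For the reverse inequality, I pick $\eta>0$ and select a near-optimal $(X,L)\in\cT(x,t)$ with total cost at most $U(x,t)+\eta$. Using (c), the shifted restriction $Y(s):=X(s+\tau)$ lies in $\cT(X(\tau),t-\tau)$, and the corresponding cost comparison gives
\[
  U\bigl(X(\tau),t-\tau\bigr) \leq \int_\tau^t l\bigl(X(s),t-s\bigr)\,\mathrm{d}s + g\bigl(X(t)\bigr).
\]
Adding $\int_0^\tau l(X(s),t-s)\,\mathrm{d}s$ to both sides and using the near-optimality of $(X,L)$, we obtain
\[
  \int_0^\tau l\bigl(X(s),t-s\bigr)\,\mathrm{d}s + U\bigl(X(\tau),t-\tau\bigr) \leq U(x,t)+\eta,
\]
and since the left-hand side is bounded below by the infimum over $\cT(x,t)$, letting $\eta\to 0$ yields the reverse inequality and hence the equality.

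The only delicate point, really, is justifying the measurable selection needed to guarantee that the concatenated $(\hat X,\hat L)$ actually comes from a measurable selection $(b,l)(s)\in\BL(\hat X(s),t-s)$, but this is standard under \hyp{\BL} (compact convex values and upper semi-continuity): since each leg comes with its own selection, the piecewise definition is measurable, and absolute continuity follows from uniform boundedness in \hyp{\BL}-$(ii)$. Apart from this purely technical verification, the proof is the classical two-inequality argument and contains no surprise specific to the stratified setting.
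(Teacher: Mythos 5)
Your proof is correct: it is the classical concatenation/restriction argument for the dynamic programming principle, with the two inequalities handled by $\eta$-optimal selections, and the time-shift $s\mapsto s-\tau$ indeed matches the dependence of $\BL$ on the remaining time $t-s$ exactly as you note. The paper states this theorem as a standard result and gives no proof, so your argument supplies precisely the expected justification; the only cosmetic remark is that no selection theorem is really needed for concatenation, since the glued trajectory is Lipschitz and its a.e.\ derivative lies in the correct set piecewise by construction.
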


Next we introduce the ``usual'' Hamiltonian $H(x,t,p)$ for $x\in\R^N$,
$t\in[0,T]$ and $p\in \R^N$ defined~as
$$H(x,t,p)=\sup_{(b,l)\in\BL(x,t)}\big\{ -b\cdot p - l \big\}\,.$$
Using \hyp{\BL}, it is easy to prove that $H$ is upper semi-continuous (w.r.t.
all variables) and is convex and Lipschitz continuous as a function of $p$ only.

The second (classical) result is the
\begin{theorem}\label{SP} {\bf (Supersolution's Property)} 
    Under Assumptions \hyp{\BL}, the value-function $U$ is a viscosity supersolution of 
    \begin{equation}\label{eq:super.H}
        U_t + H(x,t, DU) = 0 \quad \hbox{in  } \R^N\times(0,T]\; .
    \end{equation}
\end{theorem}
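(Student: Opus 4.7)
The plan is to follow the standard viscosity-supersolution argument from deterministic control theory, leveraging the dynamic programming principle (Theorem~\ref{DPP}), and adapting it to the differential-inclusion formulation of the dynamics/costs via the convex-valued upper semi-continuity in \hyp{\BL}.

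First I would fix a test function $\varphi \in C^1$ and a local minimum point $(x_0,t_0) \in \R^N \times (0,T]$ of $U - \varphi$, assuming without loss of generality that $U(x_0,t_0) = \varphi(x_0,t_0)$, and that the minimum is strict on some neighborhood. The goal is to show $\varphi_t(x_0,t_0) + H(x_0,t_0,D\varphi(x_0,t_0)) \geq 0$. Because $\B$ is bounded by $M$, any trajectory $X_{x_0,t_0}$ satisfies $|X(s)-x_0| \leq Ms$, so for $\tau$ small enough every point $(X(\tau),t_0-\tau)$ lies in the neighborhood where $U \geq \varphi$.

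Next, the key step is to exploit Theorem~\ref{DPP} in its infimum direction: for each small $\tau>0$, choose an almost-optimal trajectory $(X_\tau,L_\tau) \in \cT(x_0,t_0)$ with
\[
\int_0^\tau l_\tau\big(X_\tau(s),t_0-s\big)\ds + U\big(X_\tau(\tau),t_0-\tau\big) \;\leq\; U(x_0,t_0) + \tau^2.
\]
Combining this with $U \geq \varphi$ locally and the equality $U(x_0,t_0) = \varphi(x_0,t_0)$, then Taylor-expanding $\varphi$ along the curve and dividing by $\tau$, yields
\[
\varphi_t(x_0,t_0) - D\varphi(x_0,t_0)\cdot \frac{X_\tau(\tau)-x_0}{\tau} + o_\tau(1) \;\geq\; \frac{1}{\tau}\int_0^\tau l_\tau\big(X_\tau(s),t_0-s\big)\ds - \tau.
\]
Since both the average velocity $\bar b_\tau := \tau^{-1}(X_\tau(\tau)-x_0)$ and the average running cost $\bar l_\tau := \tau^{-1}\int_0^\tau l_\tau\,\ds$ are bounded by $M$, one extracts a subsequence $\tau_n \to 0$ along which $(\bar b_{\tau_n}, \bar l_{\tau_n}) \to (b^*,l^*)$.

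The main (and only genuinely delicate) step is to verify that $(b^*,l^*) \in \BL(x_0,t_0)$. This uses crucially both properties in \hyp{\BL}-$(i)$: convex-valuedness and upper semi-continuity. Indeed, $(\bar b_\tau, \bar l_\tau)$ is an integral mean of selections of $\BL(X_\tau(s),t_0-s)$ for $s \in [0,\tau]$, and hence lies in the closed convex hull of $\bigcup \BL(y,r)$ taken over $|y-x_0|\leq M\tau$, $t_0-\tau \leq r \leq t_0$. Upper semi-continuity of $\BL$ together with the fact that $\BL(x_0,t_0)$ is compact and convex then forces the limit $(b^*,l^*)$ into $\BL(x_0,t_0)$. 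Passing to the limit $\tau_n \to 0$ in the inequality above gives $\varphi_t(x_0,t_0) - D\varphi(x_0,t_0)\cdot b^* - l^* \geq 0$; since $H(x_0,t_0,D\varphi(x_0,t_0)) \geq -b^*\cdot D\varphi(x_0,t_0) - l^*$ by definition, we conclude $\varphi_t + H \geq 0$ at $(x_0,t_0)$, which is the desired supersolution inequality.
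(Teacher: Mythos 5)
The paper states Theorem~\ref{SP} without proof, as a classical result, so there is nothing to compare line by line; your argument is the standard one and it is essentially correct. In particular, the delicate step — identifying the limit $(b^*,l^*)$ as an element of $\BL(x_0,t_0)$ by placing the averaged dynamics/cost in the closed convex hull of $\bigcup\BL(y,r)$ over a shrinking neighbourhood and invoking upper semi-continuity plus convexity and compactness of the images — is exactly the mechanism the paper itself uses in Step~2 of the proof of Lemma~\ref{prop:super.alternative}, so your proof fits the paper's toolkit.

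The one point you should tighten is the handling of semicontinuity. The paper's notion of supersolution (Ishii's definition) is phrased for the lower semi-continuous envelope $U_*$: the test point is a minimum of $U_*-\varphi$, whereas the Dynamic Programming Principle of Theorem~\ref{DPP} is stated for $U$ itself, and you test at a minimum of $U-\varphi$. Either observe that under \hyp{\BL} the value function is in fact lsc — almost-optimal trajectories issued from $(x_n,t_n)\to(x_0,t_0)$ converge, by the same compactness/weak-$*$ convexity argument used in the proof of Lemma~\ref{lem:super.dpp}, to an admissible trajectory from $(x_0,t_0)$, whence $U(x_0,t_0)\le\liminf_n U(x_n,t_n)$ — or run your argument starting from a sequence $(x_n,t_n)\to(x_0,t_0)$ with $U(x_n,t_n)\to U_*(x_0,t_0)$, applying the DPP at $(x_n,t_n)$ and passing to the limit in $n$ for fixed $\tau$ before sending $\tau\to0$. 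With that adjustment the proof is complete.
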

In Theorem~\ref{SP}, we use the classical definition of viscosity supersolution
introduced by H.~ Ishii~\cite{Idef} for discontinuous Hamiltonians: we recall that a locally bounded function $w$ is a viscosity supersolution of (\ref{eq:super.H}) if its lower-semicontinuous envelope $w_*$ satifies
$$ (w_*)_t + H^*(x,t, Dw_*) \geq 0 \quad \hbox{in  } \R^N\times(0,T]\; ,$$
in the viscosity solutions' sense, i.e. when testing with smooth function $\phi$ at minimum points of $w_*-\phi$. Here, because of \hyp{\BL}{}, the Hamiltonian
$H$ is a locally bounded, usc function which is defined everywhere and therefore
$H^* = H$. For the sake of completeness, we recall that $w$ is a viscosity subsolution of (\ref{eq:super.H}) if its upper-semicontinuous envelope $w^*$ satifies
$$ (w^*)_t + H_*(x,t, Dw^*) \leq 0 \quad \hbox{in  } \R^N\times(0,T]\; ,$$
in the viscosity solutions' sense, i.e. when testing with smooth function $\phi$ at maximum points of $w^*-\phi$. But this definition of subsolution in $\R^N\times(0,T]$ is not the one we are going to use below.

Here and below we have chosen a formulation of viscosity solution which holds up to time
$T$, i.e. on $(0,T]$ instead of $(0,T)$, to avoid the use of terms of the form
$\eta/(T-t)$ in comparison proofs or results like \cite[Lemma~2.8, p.41]{Ba}

We also point out that both Theorem~\ref{DPP} and \ref{SP} hold in a complete general
setting, independently of the stratification we may have in mind.

We conclude this first part by a converse result showing that supersolutions
always satisfy a super-dynamic programming principle: again we remark that this
result is independent of the possible discontinuities for the dynamic or cost.

\begin{lemma}\label{lem:super.dpp}
Let $v$ be a lsc supersolution of $v_t+H(x,t,Dv)=0$ in $\R^N\times(0,T]$. Then, 
    for any $(x,t)\in\R^N\times(0,T]$ and any $0<\sigma <t$,
    \begin{equation}\label{ineq:super.dpp}
        v(x,t)\geq
        \inf_{(\X,L)\in\cT(x,t)}
        \Big\{\int_0^\sigma 
        l\big(\X(s),t-s\big)\ds+v\big(\X(\sigma),t-\sigma\big)\Big\}
    \end{equation}
\end{lemma}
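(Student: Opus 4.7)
The plan is to regularize $v$ by inf-convolution to a semiconcave supersolution $v_\eps$ satisfying a perturbed pointwise a.e.\ inequality, then to construct a trajectory $(X_\eps,L_\eps)\in\cT(x,t)$ along which $s\mapsto v_\eps(X_\eps(s),t-s)+L_\eps(s)$ is essentially nonincreasing, and finally to pass to the limit $\eps\to 0$ using Arzel\`a--Ascoli and the lsc of $v$. This is the classical route (in the spirit of Frankowska's invariance theorem) for lsc supersolutions of first-order HJB equations with convex Hamiltonians; the possible stratified structure of $\BL$ plays no role here since only \hyp{\BL}{} is used.

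Concretely, I would first introduce
\[
v_\eps(x,t) := \inf_{(y,s)\in\R^N\times(0,T]}\Big\{ v(y,s) + \frac{|x-y|^2 + (t-s)^2}{\eps^2}\Big\},
\]
which is locally Lipschitz, semiconcave, satisfies $v_\eps\leq v$, and converges pointwise to $v$ as $\eps\to 0$ (by the lsc of $v$). Using \hyp{\BL}{} together with the Lipschitz continuity of $H$ in $p$, one checks that $v_\eps$ still satisfies a perturbed a.e.\ supersolution inequality $(v_\eps)_t + H(x,t,Dv_\eps)\geq -\omega(\eps)$, with $\omega(\eps)\to 0$ uniformly on compacts. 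Given $(x,t)$ and a small step $\Delta s>0$, I would then build a piecewise-affine trajectory $X_\eps^{\Delta s}$ by picking, at each discretization node $s_k$, some $(b_k,l_k)\in\BL(X_\eps^{\Delta s}(s_k),t-s_k)$ that $\eta$-achieves the sup defining $H$ at $\big(X_\eps^{\Delta s}(s_k),t-s_k,Dv_\eps(\cdot)\big)$. A telescoping estimate of $v_\eps$ between nodes yields
\[
v_\eps(x,t) \;\geq\; v_\eps\big(X_\eps^{\Delta s}(\sigma),t-\sigma\big) + L_\eps^{\Delta s}(\sigma) - \sigma\bigl(\omega(\eps)+\eta\bigr) - o_{\Delta s}(1).
\]

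Finally, passing to the limits $\Delta s\to 0$, then $\eta\to 0$, then $\eps\to 0$, with the uniform bound $|\dot X_\eps^{\Delta s}|+|\dot L_\eps^{\Delta s}|\leq 2M$ from \hyp{\BL}-$(ii)$ and Arzel\`a--Ascoli, produces along subsequences a limit trajectory $(X,L)\in\cT(x,t)$ (the limit lies in $\cT(x,t)$ by the classical stability of differential inclusions with usc, convex-valued right-hand side). Combining the pointwise convergence $v_\eps(x,t)\to v(x,t)$ with $\liminf_{\eps}v_\eps(X_\eps(\sigma),t-\sigma)\geq v(X(\sigma),t-\sigma)$ --- which follows from the inf-convolution structure and the lsc of $v$ --- one obtains $v(x,t)\geq v(X(\sigma),t-\sigma)+L(\sigma)$, and (\ref{ineq:super.dpp}) follows at once. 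The main technical obstacle is to establish the perturbed supersolution inequality for $v_\eps$ with a quantitative modulus $\omega(\eps)$; this is where the upper semi-continuity of $H$ in $(x,t)$ and its Lipschitz continuity in $p$ (both inherited from \hyp{\BL}{}) are crucially used. Once that estimate is in hand, the discrete construction and the passage to the limit are routine.
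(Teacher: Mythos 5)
Your overall architecture (regularize, build near-optimal trajectories for the regularized object, pass to the limit) is a reasonable classical route, but the step you yourself single out as the main technical obstacle is precisely the one that fails here: the inf-convolution $v_\eps$ does \emph{not} satisfy a perturbed supersolution inequality with a modulus $\omega(\eps)\to 0$ uniform on compacts, because under \hyp{\BL}{} the Hamiltonian $H(x,t,p)=\sup_{(b,l)\in\BL(x,t)}\{-b\cdot p-l\}$ is only \emph{upper semi-continuous} in $(x,t)$ --- it genuinely jumps upward on the discontinuity set of $\BL$, which is the whole point of the paper. Transferring the viscosity inequality from the shifted touching point $(y_\eps,s_\eps)$ of $v$ back to the touching point $(x_0,t_0)$ of $v_\eps$ requires $H(y_\eps,s_\eps,p)\leq H(x_0,t_0,p)+\omega(\eps)$; upper semi-continuity gives this only pointwise, in the $\limsup$ sense, with no uniform rate. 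For an usc $H$ with a jump up at a single point $x^*$, the quantity $\sup_{|y-x|\leq C\eps}\bigl(H(y,p)-H(x,p)\bigr)_+$ stays bounded away from $0$ for $x$ near $x^*$, for every $\eps$, so no such $\omega$ exists. Since your discretized trajectory must collect this error at every node, and nothing prevents the nodes (or the shifted points $(y_\eps,s_\eps)$) from landing next to the discontinuity set, the telescoping estimate cannot be closed; Lipschitz continuity of $H$ in $p$ does not help.

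The paper avoids this by regularizing the \emph{Hamiltonian} rather than the supersolution: it introduces $H_\delta(x,t,p)=\sup_{|b|,|l|\leq M}\{-b\cdot p-l-\delta^{-1}\psi(b,l,x,t)\}$, where $\psi$ is a Lipschitz penalization of the distance to the graph of $\BL$. Then $H_\delta\geq H$ (so $v$ is still a supersolution for $H_\delta$), $H_\delta$ is globally Lipschitz in \emph{all} variables (so the classical super-DPP for lsc supersolutions applies to the unconstrained control problem with running cost $l+\delta^{-1}\psi$), and $H_\delta\downarrow H$. The limit $\delta\to0$ is handled by showing $\int\psi=O(\delta)$ along $\delta$-optimal trajectories and using the convexity of the images of $\BL$ (hence of $\psi$ in $(b,l)$), weak-$*$ compactness of the controls, and the lower semicontinuity of $v$ to conclude that the limit trajectory solves the $\BL$-differential inclusion. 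If you want to keep an inf-convolution route, you would first have to replace $H$ by such a continuous (indeed Lipschitz) majorant and only then regularize $v$; as written, your proof has a genuine gap.
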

\begin{proof}
For $M$ given by \hyp{\mathbf{\BL}}, we consider the sequence of Hamiltonians
    $$H_\delta(x,t,p):=\sup_{|b|\leq M, |l|\leq M} 
    \big\{ -b\cdot p -l - \delta^{-1}\psi(b,l,x,t)\big\}\,,$$
where 
$$\psi(b,l,x,t)=\inf_{(y,s) \in \R^N\times [0,T]} 
\Big(\dist\big((b,l),\BL(y,s)\big)+ |y-x|+|t-s|\Big)\; , $$
$\dist(\cdot,\BL(y,s))$ denoting here the distance here the set $\BL(y,s)$. 
Noticing that $\psi$ is Lipschitz continuous and  that $\psi(b,l,x,t)=0$ if
$(b,l)\in\BL(x,t)$, the following properties are easy to obtain

\noindent $(i)$ For any $\delta >0$, $H^\delta\geq H$ and therefore $v$ is a lsc
supersolution of $v_t+H_\delta (x,t,Dv)=0$,\\ 
$(ii)$ The Hamiltonians $H_\delta$ are (globally) Lipschitz continuous w.r.t.
all variables,
\\ $(iii)$ $H_\delta \downarrow H$ as $\delta \to 0$, all the other variables being
fixed.\\

By using $(i)$ and $(ii)$, it is clear that $v$ satisfies the Dynamic
Programming Principle for the control problem associated to $H_\delta$, namely
$$v(x,t)\geq
        \inf_{(X,L)}
        \Big\{\int_0^{t\wedge\sigma}
        l_\delta\big(X(s),t-s\big)\ds+v\big(X(t\wedge
        \sigma),t-t\wedge\sigma\big)\Big\}\; ,
$$
where $(X,L)$ solves the odes $\dot X (s)=
b(s)$, $\dot L (s)=l(s)$,
the controls $b(\cdot)$, $l(\cdot)$ satisfy $|b(s)|,|l(s)|\leq M$ and the cost is 
$$l_\delta \big(X^\delta(s),t-s\big)= l\big(s\big) +
\delta^{-1}\psi\Big(b(s),l(s),
X^\delta(s),t-s\Big)\; .$$
To conclude the proof, we have to let $\delta$ tend to $0$. To do so, we pick an optimal or $\delta$-optimal trajectory, i.e. $(X^\delta,L^\delta)$ such that
$$        \inf_{(X,L)}
        \Big\{\int_0^{t\wedge\sigma}
        l_\delta\big(X(s),t-s\big)\ds+v\big(X(t\wedge
        \sigma),t-t\wedge\sigma\big)\Big\} \geq 
     \int_0^{t\wedge\sigma}
        l_\delta\big(X^\delta (s),t-s\big)\ds+v\big(X^\delta(t\wedge
        \sigma),t-t\wedge\sigma\big) -\delta \; .$$
Since $\dot X^\delta =b^\delta, \dot L^\delta=l^\delta$ are uniformly bounded,
standard compactness arguments imply that up to the extraction of a
subsequence, we may assume that  $X^\delta, L^\delta$ converges uniformly on
$[0,t\wedge\sigma]$ to $(X,L)$. And we may also assume that they derivatives
converge in $L^\infty$ weak-* (in particular $\dot L^\delta =l^\delta$).

We use the above property of $X^\delta, L^\delta$, namely
\begin{equation}\label{spdd}
\int_0^{t\wedge\sigma}
        l_\delta\big(X^\delta (s),t-s\big)\ds+v\big(X^\delta(t\wedge
        \sigma),t-t\wedge\sigma\big) -\delta \leq v(x,t)\; ,
\end{equation}
in two ways: first by multiplying by $\delta$, we get
$$
\int_0^{t\wedge\sigma} \psi\Big(b^\delta (s),l^\delta (s),
X^\delta(s),t-s\Big)ds = O(\delta)\; .$$
But $\psi$ is convex in $(b,l)$ since the $\BL(y,s)$ are convex and if $(b^\delta ,l^\delta)$ converges weakly to $(b,l)$ (and $X^\delta$ converges uniformly), we have
$$\int_0^{t\wedge\sigma} \psi\Big(b(s),l(s),
X(s),t-s\Big)ds \leq \liminf_\delta \int_0^{t\wedge\sigma} \psi\Big(b^\delta (s),l^\delta (s),
X^\delta(s),t-s\Big)ds = 0\; .$$
Finally we remark that $\psi \geq 0$ and $\psi(b,l,x,t)=0$ if and only if $(b,l)\in \BL(x,t)$, therefore $(X,L)$ is a solution of the $\BL$-differential inclusion.

In order to conclude, we come back to (\ref{spdd}) and we remark that $l_\delta\big(X^\delta (s),t-s\big) \geq l^\delta (s)$ since $\psi\geq 0$. Therefore
$$\int_0^{t\wedge\sigma}
        l^\delta (s) \ds+v\big(X^\delta(t\wedge
        \sigma),t-t\wedge\sigma\big) -\delta \leq v(x,t)\; ,$$
and we pass to the limit in this inequality using the lower-semicontinuity of
$v$, together with the weak convergence of $l^\delta$ and the uniform
convergence of $X^\delta$. This yields 
$$\int_0^{t\wedge\sigma}
        l (s) \ds+v\big(X(t\wedge
        \sigma),t-t\wedge\sigma\big) \leq v(x,t)\; ,$$
and recalling that $(X,L)$ is a solution of the $\BL$-differential inclusion and
taking the infimum in the left-hand side over all solution of this differential
inclusion gives the desired answer.
\end{proof}

\section{Admissible Stratifications:\\
    how to re-read Bressan \& Hong Assumptions?}\label{AS}

In this section, we define the notion of \textit{Admissible Stratification},
which specifies the structure of the discontinuity set of $\BL$ as was
considered in \cite{BH}. We point out that, besides of the precise regularity we
will impose in connection with the control problem, this notion is nothing but
the notion of Whitney Stratification, based on the Whitney condition
\cite{W1,W2}, see below Lemma~\ref{flat-lem} and Remark~\ref{rem:whitney}.
We first do it in the case of a flat stratification;
the non-flat case is reduced to the flat one by suitable local charts.

\subsection{Admissible Flat Statification}
\label{sect:afs}

We consider here the stratification introduced in Bressan and Hong
\cite{BH} but in the case when the different embedded submanifolds of $\R^N$ are
locally affine subspace of $\R^N$. More precisely
$$\R^N=\Man{0}\cup\Man{1}\cup\cdots\cup\Man{N}\; ,$$
where the $\Man{k}$ ($k=0..N$) are disjoint submanifolds of $\R^N$. We
say that $\M=(\Man{k})_{k=0..N}$ is an \textit{Admissible Flat Stratification}
\AFS, the following set of hypotheses is satisfied

\bigskip

\noindent\AFS-$(i)$ For any $x \in \Man{k}$, there exists $r>0$ and $V_k$ a
 $k$-dimensional linear subspace of $\R^N$ such that  $$ B(x,r) \cap
         \Man{k} = B(x,r) \cap (x+V_k)\; .$$ Moreover $B(x,r) \cap
         \Man{l}=\emptyset$ if $l<k$.

\noindent \AFS-$(ii)$ If $\Man{k} \cap \overline {\Man{l}}\neq \emptyset$ for some $l>k$
       then $\Man{k} \subset \overline {\Man{l}}$.

\noindent \AFS-$(iii)$ We have $\overline{\Man{k}} \subset
       \Man{0}\cup\Man{1}\cup\cdots\cup\Man{k}$.

\begin{remark}Condition \AFS-$(i)$ implies that the set $\Man{0}$, if not
           void, consists of isolated points. Indeed, in the case $k=0$,
           $V_k=\{0\}$.
\end{remark}

We point out that these assumptions are equivalent (for the flat case)  to the
assumptions of Bressan \& Hong \cite{BH}. Indeed, we both assume a decomposition
such that the submanifolds are disjoints and the union of all of them coincide
with $\R^N$ but in order to describe the allowed stratifications we define in
a different way the submanifolds $\Man{k}$. The key point is that for us
$\Man{k}$ is here a $\mathbf{k}$-dimensional submanifold while, in \cite{BH},
the $\Man{j}$ can be of any dimension.  In other words, {\em our}  $\Man{k}$ is
the union of all submanifolds of dimension $k$ in the stratification of Bressan
\& Hong.

With this in mind it is easier to see that our assumptions  \AFS-$(ii)$-$(iii)$
are equivalent to the following assumption of Bressan and Hong: if $\Man{k}
\cap \overline {\Man{l}} \neq \emptyset$  then $\Man{k} \subset
\overline{\Man{l}}$  for all indices  $l,k$ without asking $l>k$ in our case.
But according to the last part of \AFS-$(i)$, $\Man{k} \cap \overline
{\Man{l}} = \emptyset$ if $l<k$: indeed for any $x\in \Man{k}$, there
exists $r>0$ such that $B(x,r) \cap \Man{l}=\emptyset$. This property
clearly implies \AFS-$(iii)$.

In order to be more clear let us consider a stratification in $\R^3$ induced by
the upper half-plane $\{x_3>0,x_2=0\}$ and the $x_2$-axis (see
figure 1. below).

\begin{figure}[H]\label{fig.0} 
    \begin{center}
   \includegraphics[width=8cm]{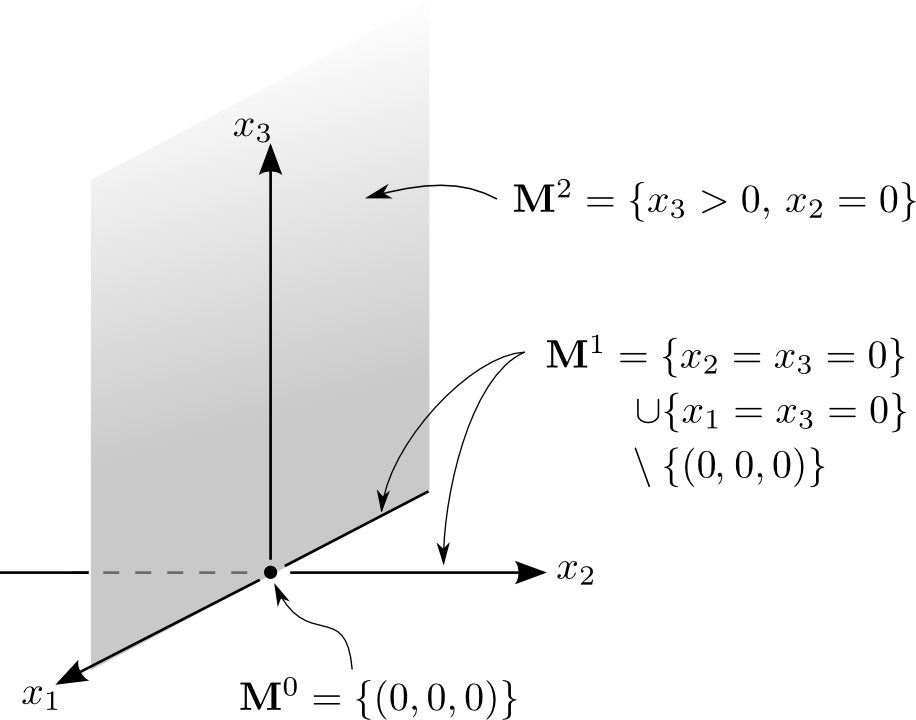}
   \caption{Example of a 3-D stratification}
   \end{center}
\end{figure}

The stratification we use in this case requires first to set
$\Man{2}=\{x_3>0,x_2=0\}$. The boundary of $\Man{2}$ which is the $x_1$-axis is
included in $\Man{1}\cup\Man{0}$ and of course, we have to set here
$\Man{0}=\{(0,0,0)\}$. Thus, $\Man{1}$ consists of four connected components
which are induced by the $x_1$- and $x_2$-axis (but excluding the origin, which
is in $\Man{0}$). Notice that in this situation, the $x_3$-axis has no
particular status, it is included in $\Man{2}$.

On the other hand, notice that \AFS-$(ii)$ FORBIDS the following decomposition
of $\R^3$
$$ \Man{2} =\{x_3>0\ ,\ x_2=0\}\;,\; \Man{1} =\{x_1=x_3=0\} \cup \{
    x_2=x_3=0\}\;,\; \Man{3}=\R^3-\Man{2}-\Man{1}\; ,$$
because $(0,0,0)\in \Man{1} \cap\, \overline {\Man{2}}$ but clearly $\Man{1}$ is not
included in $\overline {\Man{2}}$.

As a consequence of this definition we have following result which will be
usefull in a tangential regularization procedure (see Figure 2 below)
\begin{lemma}\label{flat-lem} Let $\M=(\Man{k})$ be an \AFS of
    $\R^N$, let $x$ be in $\Man{k}$ and $r, V_k$ as in \AFS-$(i)$ and
    $l>k$. Then there exists $r' \leq r$ such that, if $B(x,r') \cap \Man{l} \neq
    \emptyset$, then for any $y \in B(x,r') \cap \Man{l}$, $B(x,r') \cap (y+V_k)
    \subset B(x,r') \cap \Man{l}$.  
\end{lemma}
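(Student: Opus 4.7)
The plan is to split the proof into two cases based on whether $x \in \overline{\Man{l}}$. If $x \notin \overline{\Man{l}}$, since $\overline{\Man{l}}$ is closed I pick $r' \leq r$ small enough that $B(x, r') \cap \overline{\Man{l}} = \emptyset$; then the hypothesis $B(x, r') \cap \Man{l} \neq \emptyset$ is never satisfied and the conclusion holds vacuously. Otherwise $x \in \Man{k} \cap \overline{\Man{l}}$, so by \AFS-$(ii)$, $\Man{k} \subset \overline{\Man{l}}$, meaning that $\Man{l}$ accumulates on every point of $\Man{k} \cap B(x, r)$.

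The heart of the proof is an intermediate claim: for $r'$ sufficiently small and every $y \in B(x, r') \cap \Man{l}$, the local tangent direction $V_l^{(y)}$ given by \AFS-$(i)$ at $y$ contains $V_k$. Since \AFS-$(i)$ makes $\Man{l}$ locally affine, the direction $V_l^{(\cdot)}$ is locally constant on $\Man{l}$, and the connected component $C_y$ of $y$ in $\Man{l}$ is an open subset of the affine $l$-subspace $A_y := y + V_l^{(y)}$. By \AFS-$(iii)$, the relative boundary of $C_y$ in $A_y$ lies in $\Man{0} \cup \cdots \cup \Man{l-1}$; combining this with $\Man{k} \subset \overline{\Man{l}}$ one argues that $\overline{C_y}$ meets $\Man{k}$ in a relatively open (i.e. $k$-dimensional) piece. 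The rigidity of affine subspaces then forces $A_y \supset x + V_k$, and passing to linear parts yields $V_k \subset V_l^{(y)}$.

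Given the intermediate claim, the conclusion follows by a connectedness argument: for $z = y + v \in B(x, r') \cap (y + V_k)$, the segment $\gamma(t) = y + tv$, $t \in [0,1]$, lies in $A_y$ (since $v \in V_k \subset V_l^{(y)}$) and in $B(x, r')$ (by convexity of the ball), and the set $S := \{t \in [0,1] : \gamma(t) \in \Man{l}\}$ is open (apply \AFS-$(i)$ and the intermediate claim at any $\gamma(t_0) \in \Man{l}$) and closed (apply \AFS-$(iii)$ and lower semicontinuity of the stratum index, then either rule out a limit $\gamma(t^*) \in \Man{k}$ by noting it would force $y - x \in V_k$ and hence $y \in \Man{k}$ contradicting $y \in \Man{l}$, or rule out $\gamma(t^*) \in \Man{j^*}$ with $k < j^* < l$ by induction on the codimension gap $l-k$ applied to the lemma at $\gamma(t^*)$). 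Since $0 \in S$ one concludes $S = [0,1]$ and $z = \gamma(1) \in \Man{l}$. The main obstacle is the intermediate claim: it amounts to upgrading a Whitney-(a)-type limit statement (limits $V^*$ of $V_l^{(y_n)}$ along sequences $y_n \to x$ must satisfy $V^* \supset V_k$) into a pointwise inclusion valid at every $y$. This is delicate because the local flatness scale $\rho_y$ degenerates like $|y - x|$ as $y \to x$, preventing a naive uniform neighborhood argument; the essential flat-case trick is affine rigidity, i.e.\ once $A_y$ is shown to meet $\Man{k}$ in a relatively open piece, the entire inclusion $A_y \supset x + V_k$ is automatic.
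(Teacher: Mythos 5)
Your endgame (the segment $\gamma(t)=y+tv$, the observation that a limit point in $\Man{k}$ would force $y\in x+V_k$ and hence $y\in\Man{k}$, and an induction on the dimension gap for intermediate strata) is sound and close in spirit to the paper's argument. The genuine gap is the ``intermediate claim'' $V_k\subset V_l^{(y)}$, on which your openness step entirely rests: you assert that ``one argues that $\overline{C_y}$ meets $\Man{k}$ in a relatively open piece'', but no argument is given, and none of \AFS-$(i)$--$(iii)$ yields it. \AFS-$(ii)$ only says $\Man{k}\subset\overline{\Man{l}}$, i.e.\ $\Man{l}$ \emph{as a whole} accumulates on $\Man{k}$; it gives no control on which connected component does so, so a priori $\overline{C_y}$ need not meet $\Man{k}$ at all, and even if it does, nothing you invoke rules out that it touches $\Man{k}$ only along a relatively thin set (the relative boundary of $C_y$ may also consist of points of $\Man{j}$ with $k<j<l$). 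In fact the statement $V_k\subset V_l^{(y)}$ is precisely the Whitney-type condition that the paper \emph{deduces} from this lemma (Remark~\ref{rem:whitney}); using it as an input, justified only by ``one argues that'', is essentially circular, and the degeneration of the local flatness radius near $\Man{k}$ that you yourself flag is exactly the unresolved difficulty.

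The open-plus-closed architecture is what forces this detour: openness of $S$ at a point $\gamma(t_0)\in\Man{l}$ needs $v\in V_l^{(\gamma(t_0))}$, i.e.\ the unproved claim. The paper avoids it by arguing by contradiction with a \emph{first crossing point}: if some $z\in B(x,r')\cap(y+V_k)$ were outside $\Man{l}$, the segment $[y,z]$ would contain a point $x_0\in\overline{\Man{l}}\setminus\Man{l}$, which lies in some $\Man{j}$ with $k\le j<l$; the case $j=k$ is excluded exactly by your $y\in x+V_k$ argument, and $k<j<l$ by the induction hypothesis --- using only the local affine structure of each stratum at its \emph{own} points, never a tangent inclusion between strata. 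Note also that in your closedness step the invocation ``apply the lemma at $\gamma(t^*)$'' is the wrong instance: the lemma based at $\gamma(t^*)\in\Man{j^*}$ controls $V_{j^*}$-translates of nearby $\Man{l}$-points and produces no contradiction; what you need is the induction hypothesis based at $x$ for the pair $(k,j^*)$, which puts $\gamma(t_n)\in\Man{j^*}$ and contradicts $\gamma(t_n)\in\Man{l}$. With that correction and the first-crossing argument in place of open/closed, the intermediate claim becomes unnecessary and your proof collapses onto the paper's.
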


\begin{proof} We first consider the case when $l=k+1$. We argue by contradiction
    assuming that there exists $z \in B(x,r') \cap (y+V_k)$, $z\notin
    \Man{k+1}$. We consider the segment $[y,z]=\{ty+(1-t)z,\ t \in [0,1]\}$.
    There exists $t_0 \in [0,1]$ such that $x_0:= t_0 y+(1-t_0)z \in
    \overline{\Man{k+1}}-\Man{k+1}$. But because of the \AFS conditions,
    $\overline{\Man{k+1}}-\Man{k+1} \subset \Man{k}$ since no point of
    $\Man{0},\Man{1},\cdots \Man{k-1}$ can be in the ball. Therefore $x_0$
    belongs to some $\Man{k}$, a contradiction since $B(x,r) \cap \Man{k} =
    B(x,r) \cap (x+V_k)$ which would imply that $y \in \Man{k}$.

    For $l>k+1$, we argue by induction. If we have the result for $l$, then we
    use the same proof as above if $y \in \Man{l+1}$: there exists $z \in
    B(x,r') \cap (y+V_k)$, $z\notin \Man{l+1}$ and we build in a similar way
    $x_0 \in \overline{\Man{l+1}}-\Man{l+1}=\Man{l}$. But this is again a
    contradiction with the fact that the result holds for $l$; indeed $x_0 \in
    \Man{l}$ and $y \in x_0 + V_k \in \Man{l+1}$.  
\end{proof}

\begin{figure}[H]\label{fig.1} 
    \begin{center}
   \includegraphics[width=8cm]{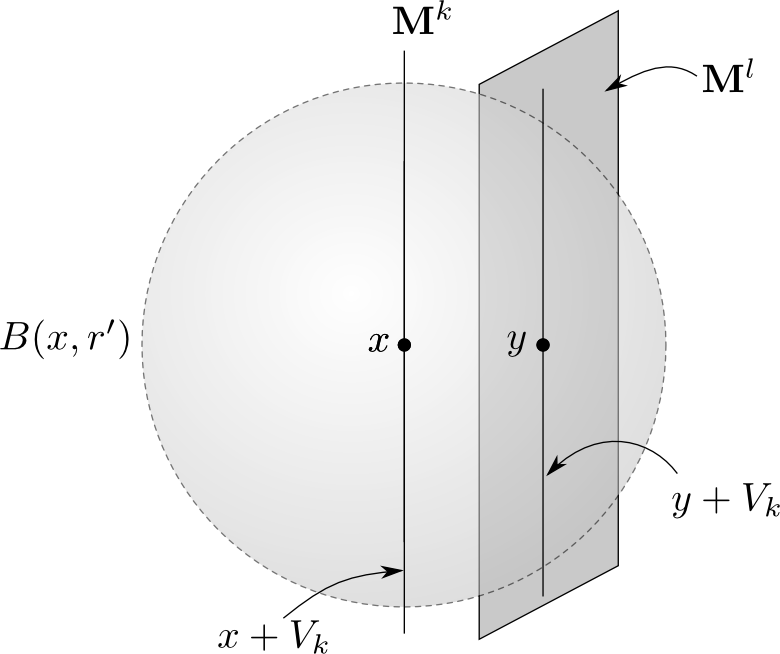}
   \caption{local situation}
   \end{center}
\end{figure}

    \begin{remark}\label{rem:whitney}
        In this flat situation, the tangent space at $x$ is $T_x:=x+V_k$
        while the tangent space at $y$ is $T_y:=y+V_l$, where $l>k$. The previous
        lemma implies that if $(y_n)_n$ is a sequence converging to $x$, then the
        limit tangent plane of the $T_{y_n}$ is $x+V_l$ and it contains $T_x$, which is exactly the
        Whitney condition ---see {\rm \cite{W1,W2}}.
    \end{remark}

\subsection{General Regular Stratification}

\begin{definition}\label{def:RS} We say that $\M=(\Man{k})_{k=0..N}$ 
    is a general regular stratification \RS of $\R^N$ if\\[2mm]
    $(i)$ the following decomposition holds:
    $\R^N=\Man{0}\cup\Man{1}\cup\cdots\cup\Man{N}$;\\[2mm]
    $(ii)$ for any $x \in \R^N$, there exists $r=r(x)>0$ and a
    $C^{1,1}$-change of coordinates $\Psi^x: B(x,r) \to \R^N$ such that the
    $\Psi^x(\Man{k}\cap B(x,r))$ form an \AFS in $\Psi^x(B(x,r))$.
\end{definition}

\begin{remark}
    If we need to be more specific, we also say that $(\M,\Psi)$ is a
    stratification of $\R^N$, keeping the reference $\Psi$ for the collection of
    changes of variable $(\Psi^x)_x$. This will be usefull in
    Section~\ref{NESR} when we consider sequences of stratifications.
\end{remark}

\noindent\textbf{Notations ---} 
The definition of regular stratifications (flat or not) allows to define, 
    for each $x\in\Man{k}$, the tangent space to $\Man{k}$ at $x$, denoted by
    $T_x\Man{k}$, which can be identified to $\R^{k}$.
    Then, if $x \in \Man{k}$ and if $r>0$ and $V_k$ are as in \AFS-$(i)$, we can
    decompose $\R^N = V_k \oplus V_k^\bot$, where $V_k^\bot$ is the orthogonal space to
    $V_k$ and for any $p \in \R^N$ we have $p= p_\top + p_\bot$ with $p_\top \in
    V_k$ and $p_{\bot}\in V_k^\bot$. In the special case
        $x\in\Man{0}$, we have $V_0=\{0\}$, $p=p_\bot$ and $T_x\Man{0}=\{0\}$.

At this stage, it remains to connect the stratification with the set-valued map
$\BL$. To do so, we first recall that the set function $\BL$ is said to be continuous
at $(x,t)\in \R^N\times\R_+$ if $\distH(\BL(y,s),\BL(x,t)) \to 0$ when $(y,s) \to (x,t)$,
where $\distH(\cdot,\cdot)$ stands for the Haussdorf distance between sets.
Now, given a regular stratification $\M=(\Man{k})_{k=0..N}$ of $\R^N$, let us
denote by $\BL|_k$ the restriction of $\BL$ to $\Man{k}\times[0,T]$
\begin{align*}\BL|_k: \Man{k}\times[0,T] & \to \mathcal{P}(\R^{N+1})\\
    (x,t) &\mapsto \BL(x,t)\cap (T_x\Man{k}\times\R)
\end{align*}

\begin{definition}
We say that the regular stratification $\M$ of $\R^N$ is
adapted to $\BL$ if for any $k\in\{0,...,N\}$, 
the restriction $\BL|_k$ is continuous on $\Man{k}\times[0,T]$.
In particular, the set of discontinuities of the restriction of
$\BL$ to any $\overline{\Man{k}}\times[0,T]$ is 
$(\Man{0}\cup\Man{1}\cup\cdots\cup\Man{k-1})\times[0,T]$.
\end{definition}

\subsection{Hamiltonians}

Considering a regular stratification $\M$ adapted to $\BL$, we introduce the
associated Hamiltonians: if $x\in \Man{k}$, $t\in [0,T]$ and $p \in T_x\Man{k}$,
the tangential Hamiltonian on the $\Man{k}$-submanifold is defined by
\begin{equation}\label{def:Hk}
H^k(x,t,p):=\sup_{\substack{(b,l)\in\BL(x,t)\\ b\in T_x\Man{k}}}\big\{ -
    b\cdot p - l\big\}\,.
\end{equation}
The continuity requirements on the maps $\BL|_k$ (see above) together with the
compactness of each $\BL(x,t)$ implies the continuity of $H^k$ in $(x,t,p)$, for
any $k$. In this definition (where we have implicitly identified $T_x\Man{k}$ as
a subspace of $\R^N$), it is clear that $H^k$ depends on $p$ only through its
projection on $T_x\Man{k}$ but we keep the notation $p$ to simplify the
notations.

Notice that in the special case $k=0$, since $T_x\Man{0}=\{0\}$ the Hamiltonian
reduces to:
$$H^0(x,t)=\sup_{\substack{(b,l)\in\BL(x,t)\\ b=0}}\big\{-l\big\}=
-\inf \big\{l: (0,l)\in\BL(x,t)\big\}\,.$$

In order to prove comparison for the complemented problem, we need some 
assumptions on the Hamiltonians that we formulate first in the case of an \AFS.

For any $x\in\R^N$, if $x \in \Man{k}$ and $r=r(x)$ is given by \AFS-$(i)$, there
exist three constants $C_i=C_i(x,r)$ ($i=1..3$) and a modulus of continuity
$m:[0,+\infty) \to [0,+\infty) $ with $m(0+)=0$  such that

\noindent \TC \textit{Tangential Continuity}: for any $0\leq k\leq j\leq N$, for
any $t,t'\in [0,T]$, if $y_1,y_2\in\Man{j}\cap B(x,r)$ with $y_1-y_2 \in V_k$,
then $$|H^j(y_1,t,p)-H^j(y_2,t',p)|\leq C_1\big\{|y_1-y_2|+m(|t-t'|)\big\}|p|
+m\big(|y_1-y_2|+|t-t'|\big)\,.$$
\ 
We point out the importance of Lemma~\ref{flat-lem} which implies that this is
actually an assumption on any $y_1$ (or $y_2$) of $\Man{j}$.\\

\noindent \NC \textit{Normal Controllability}: for any $0\leq k< j\leq N$, for
any $t\in [0,T]$, if $y\in\Man{j}\cap B(x,r)$ then $$H^j(y,t,p)\geq \delta
|p_\bot| - C_2(1+|p_\top|)\,.$$

\noindent In particular, in the special case $k=0$, we have $p=p_\bot$. So, \NC
implies the coercivity w.r.t $p$ of all the Hamiltonians $H^k$, $k=1..N$, in a
neighborhood of any point $x\in\Man{0}$ (recall that such points are isolated).

\ 

\noindent \LP \textit{Lipschitz continuity}: because of the
boundedness of $\BL$, there exists $C_3$ such that, for any $0\leq k\leq j\leq
N$, if $y\in\Man{j}\cap B(x,r)$ then 
$$|H^j(y,u,p)-H^j(y,u,q)|\leq C_3|p-q|\,.$$

It is worth pointing out that these assumptions (except perhaps \LP) are local
assumptions since they have to hold in a neighborrhood of each point in $\R^N$
and the different constants or modulus of continuity may depend on the
considered point. The strategy of proof for the comparison result will explain
this unusual feature and in particular Lemma~\ref{FLTG}.

\begin{definition}\label{def:AHG} Let $\M$ be a general regular stratification
    associated to $\BL$ and $(H^k)_{k=0..N}$ be the associated Hamiltonians.\\[2mm]
        $(i)$ In the case of an admissible flat stratification, we
        say that the associated Hamiltonians $(H^k)_{k=0..N}$ satisfy the
        \textit{Local Assumptions on the Hamiltonians in the Flat case} \LAHF if
        \TC, \NC and \LP are satisfied.\\[2mm]
    $(ii)$ In the general case, we say that the associated Hamiltonians satisfy
    the \textit{Assumption on the Hamiltonians in the general case} \AHG if the
    Hamiltonians $\tilde H^k (y,t,q):= H^k (\chi(y),t,\chi'(y)q)$ 
    satisfy the \LAHF, where $\chi=(\Psi^x)^{-1}$.
\end{definition}

In order to be complete, we give below sufficient conditions in terms of $\BL$
for \TC \& \NC to hold: the first one concerns regularity and the second one
ensures the normal coercivity of the Hamiltonians. \\

\noindent\TCBL For any $0\leq k\leq j\leq N$, for any $t\in [0,T]$, if
$y_1,y_2\in\Man{j}\cap B(x,r)$ with $y_1-y_2 \in V_k$,
$$\begin{cases}\distH\big(\B(y_1,t),\B(y_2,t)\big) \leq C_1|y_1-y_2|\,,\\[2mm] 
    \distH\big(\BL(y_1,t),\BL(y_2,t')\big) \leq
    m\big(|y_1-y_2|+|t-t'|\big)\,.\end{cases}$$

\noindent\NCBL There exists $\delta >0$ such that, for any $0\leq k< N$, for
any $t\in [0,T]$, if $y\in B(x,r)\setminus\Man{k}$ there holds
$$B(0,\delta) \cap V_k^\bot \subset \B(y,t) \cap V_k^\bot\,.$$

\noindent Here also, the case $k=0$ is particular: we impose a complete
controllability of the system in a neighborhood of $x\in\Man{0}$ since the
condition reduces to $B(0,\delta)\subset \B(y,t)$ because $V_k^\bot = \R^N$.

\noindent This normal controllability assumption plays a key role in all our
analysis: first, in the proof of Theorem~\ref{SubP} below, to obtain the
viscosity subsolution inequalities for the value function, in the comparison
proof to allow the regularization (in a suitable sense) of the subsolutions and,
last but not least, for the stability result.

\section{Control Problems on Stratified Domains (II):\\
    Subsolutions and Complemented Hamilton-Jacobi-Bellman Equations}\label{CHJB}

For the subsolution's property of $U$, the behaviour of the dynamic is going to
play a key role and we have to strengthen Assumption \hyp{\BL} by adding
continuity and controllability assumptions, \TCBL \& \NCBL which are equivalent
to \TC \& \NC. The main consequences of \TCBL \& \NCBL is the 
\begin{theorem}\label{SubP} {\bf (Subsolution's Property)}
    Under Assumptions
    \hyp{\BL}, \TCBL and \NCBL, the value-function $U$ satisfies\\[2mm]
        $(i)$ For any $k=0..(N-1)$, $U^* = (U|_{\Man{k}})^*$ on
        $\Man{k}$\,;\\[2mm] 
    $(ii)$ for any $k=0..(N-1)$, $U$ is a subsolution of 
    $$U_t+H^k(x,t,DU)=0\quad\text{on }\Man{k}\times(0,T)\,.$$
\end{theorem}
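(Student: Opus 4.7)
The strategy is standard for viscosity subsolution properties derived from an optimal control problem, with the essential twist that the DPP is applied along trajectories specifically constrained to $\Man{k}$. The inclusion $(U|_{\Man{k}})^*(x_0,t_0)\leq U^*(x_0,t_0)$ on $\Man{k}$ is trivial since the right-hand side is a $\limsup$ over a larger set; the content of $(i)$ is the reverse inequality, which relies on \NCBL. Once $(i)$ is available, $(ii)$ follows from a classical test-function argument, provided one can build trajectories staying on $\Man{k}$, and this is where \TCBL (through the continuity of $\BL|_k$) comes in. Throughout, I localize near $x_0\in\Man{k}$ and, via the chart $\Psi^{x_0}$ of Definition~\ref{def:RS} combined with \AHG, reduce to the flat case where $\Man{k}$ coincides locally with $x_0+V_k$.

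\emph{Proof of $(i)$.} Let $(x_n,t_n)\to(x_0,t_0)$ with $U(x_n,t_n)\to U^*(x_0,t_0)$; the only nontrivial case is $x_n\notin\Man{k}$ for all $n$. By \NCBL, the vector $b_n:=-\delta(x_n-\Pi x_n)/|x_n-\Pi x_n|$, where $\Pi$ is the orthogonal projection onto $x_0+V_k$, belongs to $B(0,\delta)\cap V_k^\bot\subset\B(x_n,t_n)$, and the same inclusion holds at every point of the segment from $x_n$ to $y_n:=\Pi x_n\in\Man{k}$ (which lies in $B(x_0,r)\setminus\Man{k}$ except at its endpoint, thanks to \AFS-$(i)$). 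A measurable selection of the corresponding running cost yields a trajectory $(X_n,L_n)$ of the $\BL$-differential inclusion, starting at $(x_n,0)$ and reaching $y_n$ at time $\tau_n=|x_n-\Pi x_n|/\delta\to 0$. Theorem~\ref{DPP} and \hyp{\BL}-$(ii)$ then give
\[
    U(x_n,t_n)\leq L_n(\tau_n)+U(y_n,t_n-\tau_n)\leq M\tau_n+U(y_n,t_n-\tau_n),
\]
and passing to $\limsup_n$ with $(y_n,t_n-\tau_n)\to(x_0,t_0)$, $y_n\in\Man{k}$, yields $U^*(x_0,t_0)\leq(U|_{\Man{k}})^*(x_0,t_0)$.

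\emph{Proof of $(ii)$.} Let $\phi\in C^1$ and assume $(x_0,t_0)\in\Man{k}\times(0,T)$ is a local maximum of $U^*-\phi$, which we take strict and normalize to $U^*\leq\phi$ on a neighborhood, with equality at $(x_0,t_0)$. By $(i)$ pick $(x_n,t_n)\in\Man{k}\times(0,T)$ with $(x_n,t_n)\to(x_0,t_0)$ and $U(x_n,t_n)\to U^*(x_0,t_0)=\phi(x_0,t_0)$. Fix any $(b,l)\in\BL(x_0,t_0)$ with $b\in T_{x_0}\Man{k}=V_k$. The continuity of $\BL|_k$ on $\Man{k}\times[0,T]$, together with its convex-compact values and \TCBL, enables a Filippov-type selection producing, for each fixed small $\tau>0$, a trajectory $(X_n,L_n):[0,\tau]\to\Man{k}\times\R$ of the differential inclusion, with $\dot X_n(s)\in T_{X_n(s)}\Man{k}$ a.e., starting at $(x_n,0)$ and satisfying $(X_n(\tau),L_n(\tau))\to(x_0+\tau b,\tau l)$ as $n\to\infty$. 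Theorem~\ref{DPP} yields
\[
    U(x_n,t_n)\leq L_n(\tau)+U(X_n(\tau),t_n-\tau).
\]
Passing to $\limsup_n$, using the upper semi-continuity of $U^*$ and then the local maximum property (note $x_0+\tau b\in\Man{k}$ for small $\tau$ since $b\in V_k$), we obtain
\[
    \phi(x_0,t_0)\leq\tau l+U^*(x_0+\tau b,t_0-\tau)\leq\tau l+\phi(x_0+\tau b,t_0-\tau).
\]
Dividing by $\tau$ and Taylor-expanding as $\tau\to 0^+$ yields $\phi_t(x_0,t_0)-b\cdot D\phi(x_0,t_0)-l\leq 0$; taking the supremum over admissible $(b,l)$ gives the required inequality $\phi_t(x_0,t_0)+H^k(x_0,t_0,D\phi(x_0,t_0))\leq 0$.

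\emph{Main obstacle.} The delicate point is the construction of the constrained trajectory in $(ii)$: the tangency $\dot X_n(s)\in T_{X_n(s)}\Man{k}$ must be enforced along the whole trajectory, not only at the starting point. This is precisely where the continuity of $\BL|_k$ on $\Man{k}\times[0,T]$ (included in the notion of $\M$ being adapted to $\BL$) is essential; upper semi-continuity of the full $\BL$ alone would let us build a trajectory of the ambient inclusion but not one staying on $\Man{k}$. In the non-flat case the same argument goes through after the $C^{1,1}$ transport $\Psi^{x_0}$ of \AHG, which preserves both the convex-compact upper semi-continuous structure of $\BL$ and the tangency constraints defining $\tilde H^k$.
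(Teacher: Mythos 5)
Your proof is correct and follows essentially the same route as the paper: normal controllability plus the DPP to push the approximating sequence onto $\Man{k}$ for $(i)$, then tangential trajectories built from the continuity of $\BL|_k$ plus the DPP and a standard test-function argument for $(ii)$. The only cosmetic differences are that the paper prescribes only the normal component $\frac{\delta}{2}(\tilde x_\e-x_\e)/|\tilde x_\e-x_\e|$ of the dynamic (invoking Lemma~\ref{flat-lem} to absorb the resulting tangential drift, whereas you take a purely normal velocity --- note it should have norm $<\delta$, not $=\delta$, to lie in the open ball of \NCBL) and that, in $(ii)$, the paper first treats $(b,l)$ in the interior of $\BL|_k(x,t)$ and then passes to the limit, where you instead invoke a Filippov/projection-type selection directly (which also works, up to an $o(\tau)$ correction in $(X_n(\tau),L_n(\tau))\to(x_0+\tau b,\tau l)$ that is harmless after dividing by $\tau$).
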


In this result, we point out -- even if it is obvious-- that $(ii)$ is a
viscosity inequality for an equation {\em restricted to $\Man{k}$}, namely it means that if $\phi$ is a smooth function on $\Man{k}\times(0,T)$ (or equivalently on $\R^N\times(0,T)$ by extension) and if $(x,t)\in \Man{k}\times(0,T)$ is a local maximum point of $U^*-\phi$ on $\Man{k}\times(0,T)$, then
$$
\phi_t(x,t) +H^k(x,t,D\phi(x,t))\leq 0\ \footnote{\label{fnt:ext} For the sake of simplicity, we have still denoted by $\phi$ the smooth extension of $\phi$ to $\R^N\times(0,T)$ and by $D\phi$ its gradient in $\R^N$ but because of the form of $H^k$, clearly only the part of $D\phi$ which is on the tangent space of $\Man{k}$ at $x$ plays a role in this inequality.}\; .
$$ 
This is
why point $(i)$ is an important fact since it allows to restrict everything
(including the computation of the usc envelope of $U$) to $\Man{k}$.

\begin{proof} We provide the proof in the case of an \AFS, the general case
    resulting from a simple change of variable.

We consider $x\in \Man{k}, t\in (0,T]$ and a sequence $(\xe,\te)\to (x,t)$
such that $$ U^*(x,t) = \lim_\e U(\xe,\te)\; .$$ We have to show that we
can assume that $\xe \in \Man{k}$. 

We assume that, on the contrary, $\xe \notin \Man{k}$ and we show how to build a sequence
of points $(\bar x_\e,\bar t_\e)_\e$ with  $\bar x_\e \in \Man{k}$ for any $k$ and with
$ U^*(x,t) = \lim_\e U(\bar x_\e,\bar t_\e)$.

By Theorem~\ref{DPP}, we have
$$U (\xe,\te) \leq \int_0^\tau 
    l\big(\X(s),t-s\big)\dt+U \big(\X(\tau),t-\tau\big)\,,$$
for any solution $(X,L)$ of the differential inclusion starting from $(\xe,0)$.
Let $\tilde x_\e$ be the projection of $\xe$ on $\Man{k}$; we have $\tilde
x_\e-\xe \in V_k^\bot$ and by \NCBL, there exists $b\in\B(y,s)$ for any $y\in
B(x,r)$ (the ball given by \AFS-$(i)$), such that $b_\bot:=\delta/2.(\tilde
x_\e-\xe)|\tilde x_\e-\xe|^{-1}$.
 
Choosing such a dynamic $b$ (with any constant cost $l$), it is clear that $X(s)\in B(x,r)$ for $s$ small enough (independent of $\e$) and for $s_\e=2|\tilde
x_\e-\xe|/\delta$, we have $\bar x_\e = X(s_\e)=\tilde x_e + y_\e$ where $y_\e \in V_k$, $|y_\e| = O(|\tilde x_\e-\xe|)$. Therefore $\bar x_\e \in \Man{k}$ by Lemma~\ref{flat-lem} and if we set $\bar t_\e = t_\e-s_\e$, we have
$$U (\xe,\te) \leq \int_0^{s_\e} l \dt+U
\big(\X(s_\e),t_\e-s_\e\big)= s_\e\, l+U \big(\bar x_\e,\bar t_\e \big)\,.$$
Finally since $s_\e \to 0$ as $\e \to 0$, we deduce that
$$ \limsup_\e U \big(\bar x_\e,\bar t_\e\big) \geq \limsup_\e
\, U (\xe,\te)=U^*(x,t)\; ,$$ which shows $(i)$ since $\bar x_\e \in
\Man{k}$.

To prove $(ii)$, we assume now that $\xe \in \Man{k}$ and we use again
Theorem~\ref{DPP} which implies 
$$U (\xe,\te) \leq \int_0^\tau 
    l\big(\X(s),t-s\big)\dt+U \big(\X(\tau),t-\tau\big)\,,$$
for any solution $(X,L)$ of the differential inclusion starting from
$(\xe,0)$. Using the continuity of $\BL|_k$, if $(b,l)$ is in the interior of $\BL|_k (x,t)$, the trajectory $X(s)$, starting from $\xe$ at time $\te$ remains on
$\Man{k}$ for $s\in[0,\tau]$ if $\tau$ is small enough (but
independent of $\e$). Thus, the viscosity inequality can
be obtained as in the standard case and we obtain the inequality for $(b,l)$ is in the whole $\BL|_k (x,t)$ by a simple passage to the limit.
\end{proof}

\
The sub and supersolution properties of the value function naturally leads us to
the following definition.
\begin{definition}\label{def:HJBSD}
    Let $\M$ be a regular stratification of $\R^N$ associated to a set-valued
    map $\BL$. 

    \noindent $\noindent (i)$ A bounded usc function $u:\R^N\times[0,T]\to\R$ is
    a viscosity subsolution of the Hamilton-Jacobi-Bellman in Stratified Domain
    {\rm [\HJBSD} for short{\rm ]}, if and only if it is a subsolution of 
    $$u_t + H^k (x,t, Du) = 0 \quad \hbox{on  } \Man{k}\times(0,T]\; ,$$
    for any $k=0..N$, i.e if, for any test-function $\phi \in
    C^1(\Man{k} \times[0,T])$ and for any local maximum point $(x,t) \in \Man{k}
    \times(0,T)$ of $u-\phi$ on $\Man{k} \times(0,T]$, we have $$\phi_t (x,t) +
    H^k (x,t, D\phi (x,t)) \leq  0 \; .$$

    \noindent $\noindent (ii)$ A bounded lsc function $v:\R^N\to\R$ is a viscosity
    supersolution of \HJBSD if it is a viscosity supersolution of
    $$v_t+H(x,t,Dv)=0  \quad \hbox{in  } \R^N \times(0,T]\; .$$
\end{definition}

    The same remark as above applies, see footnote~\ref{fnt:ext}: the extension
    of $\phi$ to all $\R^N\times(0,T)$ is still denoted by $\phi$, for the sake of
    simplicity of notations.

In the sequel, we also say that a function is a subsolution or a supersolution
of \HJBSD in a domain $D\subset \R^N \times (0,T]$ if the above properties hold
true either in $\Man{k} \times(0,T] \cap D$ or in $D$. We also say that $u$ is a
{\em strict} subsolution of \HJBSD in a domain $D\subset \R^N \times (0,T]$ if
the inequality $\leq 0$ is replaced by $\leq -\eta$ for some $\eta>0$.

\begin{remark} As in \cite{BBC1,BBC2}, we notice that additional subsolution conditions involving the tangential Hamiltonians $(H^k)_{k=0..N}$ are required on the manifolds $\Man{k}$'s . It might be surprising anyway that we have no subsolution condition related to trajectories which are leaving $\Man{k}$ for $k<N$. In fact, even if we are not going to enter into details here, these conditions can be deduced from the inequalities on $\Man{l}$ for $l>k$ in the spirit of \cite[Theorem 3.1]{BBC1}.
\end{remark}

\section{Comparison, Uniqueness and Continuity of the Value-Function}\label{CUCVF}

In this section, we provide our main comparison result for \HJBSD. Since the proof relies on proving comparison properties in different subdomains of $\R^N$, we introduce the following definition.
\begin{definition} We have a comparison result for \HJBSD in $Q=\Omega \times
    (t_1,t_2)$, where $\Omega$ is an open subset of $\R^N$ and $0\leq
    t_1<t_2\leq T$, if, for any bounded usc subsolution $u$ of \HJBSD in $Q$ and
    any bounded lsc supersolution $v$ of \HJBSD in $Q$, then
$$\| ( u- v)_+\|_{L^\infty(\overline Q)}\leq
    \| (u- v)_+\|_{L^\infty(\partial_p Q)}\,,$$
where $\partial_p Q$ denotes the parabolic boundary of $Q$, i.e. $\partial_p
Q:=\partial \Omega \times [t_1,t_2] \cup \overline \Omega \times \{t_1\}$.
\end{definition}

Our main result is the
\begin{theorem} \label{thm:main}
    We have a comparison result for \HJBSD in any subdomain $Q=\Omega \times
    (t_1,t_2)$ of $\R^N \times (0,T)$.  
\end{theorem}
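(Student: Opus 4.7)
The overall plan is a three-tier reduction: global comparison on $Q$ $\leadsto$ local comparison on small balls $\leadsto$ comparison in the flat case $\leadsto$ induction on the codimension $N-k$.

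First, I would reduce the statement on $Q$ to a local comparison result $\LCP(B(x_0, r) \times (t_1, t_2))$ on small balls. If the supremum of $u - v$ over $\overline Q$ exceeds the bound on $\partial_p Q$, then a doubling-of-variables argument localizes this supremum in both space and time to an arbitrarily small cylinder around some $(\bar x, \bar t) \in Q$, and the super-DPP of Lemma~\ref{lem:super.dpp} allows one to slide the time component slightly if needed. Hence it suffices to prove $\LCP$ on $B(x_0, r) \times (t_1, t_2)$ for every $x_0 \in \overline\Omega$ and every sufficiently small $r$.

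Second, I would push $u$ and $v$ through the chart $\chi := (\Psi^{x_0})^{-1}$ to flatten the stratification locally. The transported sub/supersolutions satisfy \HJBSD with Hamiltonians $\tilde H^k(y,t,q) := H^k(\chi(y), t, \chi'(y) q)$, which by \AHG satisfy \LAHF, and the transported stratification is an \AFS. Setting $\Man{\geq k} := \Man{k} \cup \Man{k+1} \cup \cdots \cup \Man{N}$, I would then argue by decreasing induction on $k$ that comparison holds on $B(x_0, r) \cap \Man{\geq k}$. The base case $k = N$ is classical: $\Man{N}$ is open and $H \equiv H^N$ there is continuous and Lipschitz in $p$ by \LP, so the standard parabolic HJ comparison applies.

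Third, the induction step from $k+1$ to $k$ is the core of the proof. If $u - v$ achieves a positive interior supremum on $B(x_0, r) \cap \Man{\geq k}$ but not on $\Man{\geq k+1}$, then it achieves one at some $(\bar x, \bar t) \in \Man{k}$. On $\Man{k}$, $u$ only satisfies the tangential inequality with $H^k$, which has no coercivity in the normal direction $V_k^\bot$, so direct doubling fails. Following the strategy of \cite{BBC2}, I would proceed in two steps: \emph{(i)} tangentially regularize $u$ by sup-convolution restricted to the $V_k$-directions---legitimate thanks to Lemma~\ref{flat-lem}, since small $V_k$-translates preserve membership in $\Man{k}$ and in the surrounding strata $\Man{\geq k+1}$ (the Whitney property, Remark~\ref{rem:whitney})---which by \TC yields a modified subsolution with a small controllable error, Lipschitz in the tangential variables; \emph{(ii)} perturb $v$ into $v^\rho := v - \rho\,\varphi(x_\bot)$ with $\varphi$ chosen so that the normal coercivity supplied by \NC makes $v^\rho$ a \emph{strict} supersolution of $v^\rho_t + H(x,t,Dv^\rho) \geq \eta > 0$ in a tubular neighborhood of $\Man{k}$ inside $\Man{\geq k+1}$. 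The induction hypothesis then forbids any maximum of $u^{\mathrm{reg}} - v^\rho$ in $\Man{\geq k+1}$, while the tangential sub/supersolution inequalities on $\Man{k}$ combined with the strict gain $\eta$ rule out such a maximum on $\Man{k}$ itself, a contradiction.

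The main obstacle will be the calibration in the induction step: choosing the tangential regularization scale, $\rho$, and $\eta$ consistently so that the subsolution error generated by the tangential sup-convolution is strictly dominated by the supersolution gain extracted from \NC, while accommodating only the $C^{1,1}$-regularity of $\Psi^{x_0}$. This is precisely where the fine interplay between \TC, \NC and the Whitney-type Lemma~\ref{flat-lem} must be exploited.
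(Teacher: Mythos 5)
Your reduction scheme (global $\to$ local comparison in small cylinders, flattening by the charts $\Psi^{x_0}$, backwards induction on $k$ with the tangential sup-convolution of the subsolution justified by Lemma~\ref{flat-lem}) matches the paper's architecture, and the regularization step \emph{(i)} is essentially the paper's Lemma~\ref{RegTan}. The gap is in step \emph{(ii)}, i.e.\ in how you dispose of a maximum point of $u^{\mathrm{reg}}-v$ sitting on $\Man{k}$. First, you invoke ``the tangential sub/supersolution inequalities on $\Man{k}$'', but a supersolution of \HJBSD (Definition~\ref{def:HJBSD}$(ii)$) is only required to satisfy $v_t+H(x,t,Dv)\geq 0$ with the \emph{full} Hamiltonian $H$; no inequality $v_t+H^k(x,t,Dv)\geq 0$ on $\Man{k}$ is part of the definition, and whether it holds at the maximum point is precisely the main difficulty. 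Second, the mechanism you propose to compensate for this --- turning $v$ into a strict supersolution $v^\rho=v-\rho\,\varphi(x_\bot)$ via \NC --- does not work: at a test point of $v^\rho$ one gets $\phi_t+H(x,t,D\phi+\rho D\varphi)\geq 0$ and one would need $H(x,t,D\phi)\geq H(x,t,D\phi+\rho D\varphi)+\eta$, i.e.\ a \emph{signed} decrease of $H$ under a normal perturbation of $p$. Normal coercivity only gives a lower bound $H^j(y,t,p)\geq\delta|p_\bot|-C_2(1+|p_\top|)$ and, combined with \LP, a two-sided Lipschitz bound $|H(p)-H(p+\rho D\varphi)|\leq C_3\rho|D\varphi|$ with no sign; such distance-type perturbations are useful for \emph{sub}solutions (to push maximum points onto $\Man{k}$, as in the proof of Theorem~\ref{thm:main.stability}), not to render a supersolution strict.

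What is missing is the control-theoretic dichotomy that replaces the nonexistent tangential supersolution inequality. The paper proves (Lemma~\ref{prop:super.alternative}, relying on the super-DPP of Lemma~\ref{lem:super.dpp} and a convex-analysis choice of a normal slope $\bar q$) that at a tangential minimum point of $v-\phi$ on $\Man{k}$, \emph{either} {\bf(A)} there exist points $x_n\to x$ off $\Man{k}$ and trajectories staying in $\Man{\geq k+1}$ up to a time $\bar\tau>0$ uniform in $n$ along which $v$ satisfies the super-dynamic programming inequality, \emph{or} {\bf(B)} $v_t+H^k(x,t,Dv)\geq 0$ holds in the viscosity sense. In case {\bf(B)} your argument closes (the regularized subsolution is $C^1$ on $\Man{k}$ and strict, giving an immediate contradiction). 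In case {\bf(A)} one must instead compare $u^{\mathrm{reg}}$ and $v$ \emph{along the trajectories}: the induction hypothesis \Q{k+1} yields, via the exit-time problem of Lemma~\ref{lem:sub.Qk.dpp}, a sub-optimality principle for $u^{\mathrm{reg}}$ in $\Man{\geq k+1}$, and combining it with the super-DPP for $v$ and the strictness $-\eta$ of the subsolution contradicts the maximality of $u^{\mathrm{reg}}-v$ at the point on $\Man{k}$. Without this alternative and the two dynamic programming principles, the induction step cannot be completed.
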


In order to guide the reader in the long and unusual proof (despite it has some
common features with the global strategy in Bressan \& Hong \cite{BH} and uses
locally the ideas of \cite{BBC2}), we indicate the main steps.
\label{page:recurrence}

\begin{itemize}
\item We first show that, instead of proving a ``global'' comparison result, we
      can reduce to comparison results in ``small'' balls. Essentially this
      first step allows us to reduce to the case of ``flat stratifications'',
      namely \AFS.
\item Then we argue by induction on the dimension of the submanifolds which are
      contained in the small ball: if the small ball is included in $\Man{N}$,
      this means that there is no no discontinuities and we have a standard comparison
      result. The next step consists in proving a comparison result in the
      case when the ball intersects both $\Man{N}$ and $\Man{N-1}$, which is
      actually already done in \cite{BBC2}. Therefore the induction consists in
      proving that if we have a comparison result for any ball intersecting
      (possibly) $\Man{N},\dots, \Man{k+1}$, then it is also true for any ball
      intersecting $\Man{N},\dots,\Man{k+1},\Man{k}$.
\item To perform the proof of this result, we use three key ingredients: for
      the subsolution, the regularization by sup-convolution and then by usual
      convolution in the tangent direction to $\Man{k}$ (and this is where the
      \AFS structure is playing a key role, see Lemma~\ref{flat-lem}) together with
      the fact that a comparison result in $\Man{k+1}\cup \Man{k+2} \cdots \cup
      \Man{N}$ implies that subsolutions satisfy a sub-optimality principle in
      this domain. On the other hand, for the supersolution, the DPP allows us
      to prove an analogous ``magic lemma'' as in \cite{BBC1,BBC2}.
\end{itemize}

In order to formulate the induction, let us introduce the following statement,
where $k\in\{0,...,N\}$
\begin{itemize} 
\item[\Q{k}:]
    {\it For any ball $B\subset \Man{k}\cup\Man{k+1}\cup\dots\cup\Man{N}$,
        for any $0\leq t_1 <t_2 \leq T$ and for
        any strict subsolution $u$ of \HJBSD in $B\times (t_1,t_2]$ and any
        supersolution $v$ of \HJBSD in $B\times (t_1,t_2]$, $u-v$ cannot
        have a local maximum point in $B\times (t_1,t_2]$.} 
\end{itemize} 

    \begin{remark} We use a localized formulation of Property \Q{k} in any
        ball because we apply it below to functions which, at level $k$, are only
        subsolutions in such specific balls. 
    \end{remark}

\subsection{From local to global comparison}

Our first result consists in showing that we can
reduce the global comparison result in $\R^N\times [0,T]$ to ``local''
comparison results. Let us introduce the following version of the Local
Comparison Principle in a cylinder $\Omega\times[0,T]\subset\R^N\times[0,T]$

\noindent $\LCP(\Omega)$: \textit{for any $(x,t) \in \Omega \times (0,T]$, 
    there exists $\bar r,\bar h >0$ such that $B_{\bar r}(x) \subset \Omega$,
    $\bar h \leq t$ and one has a comparaison result in $B(x,r) \times (t-h,t)$
    for any $r\leq \bar r$ and $h\leq \bar h$.  }

\begin{lemma}\label{FLTG}
    Assume \hyp{\BL}. We have a comparaison result in $Q:=\Omega \times (0,T]$ if
    and only if $\LCP(\Omega)$ holds true. 
\end{lemma}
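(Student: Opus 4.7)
\emph{Proof plan.} The direct implication $\Rightarrow$ is immediate, as the cylinders $B(x,r)\times(t-h,t)$ appearing in $\LCP(\Omega)$ are particular subdomains of the same type $\Omega'\times(t_1,t_2)$ as $Q$, and any comparison statement phrased in that generality descends to them. I focus on the converse.

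Assume $\LCP(\Omega)$ and argue by contradiction: suppose there exist a bounded usc subsolution $u$ and a bounded lsc supersolution $v$ of \HJBSD{} in $Q=\Omega\times(0,T]$ with
\[
    M:=\sup_{\bar Q}(u-v)\,>\,M_0:=\|(u-v)_+\|_{L^\infty(\partial_p Q)}\,.
\]
Pick $\eta>0$ small enough that $M-\eta T>M_0$. Setting $\phi:=u-v-\eta t$ (usc, bounded), we still have $M^\eta:=\sup_{\bar Q}\phi>M_0$. For unbounded $\Omega$, a standard penalization $-\beta|x|^2$ confines the positive supremum of $\phi$ to a compact set, so I argue below as if $\bar Q$ were compact. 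Since $t\mapsto\sup_x\phi(\cdot,t)$ is upper semi-continuous on $[0,T]$, the time
\[
    \tau:=\min\Big\{t\in[0,T]:\sup_{x\in\bar\Omega}\phi(x,t)=M^\eta\Big\}
\]
is well-defined and attained at some $(x_0,\tau)$. The strict bound $M^\eta>M_0$ precludes $(x_0,\tau)\in\partial_p Q$, so $x_0\in\Omega$ and $\tau>0$.

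To force a strict gap on the lateral boundary of a small cylinder centred at $(x_0,\tau)$, introduce the quadratic localization $\tilde u(x,t):=u(x,t)-\eta t-\alpha|x-x_0|^2$ for a small $\alpha>0$. A direct computation using the Lipschitz dependence of each $H^k$ in $p$ (which is inherited from the boundedness of $\BL$ in \hyp{\BL}) shows that $\tilde u$ remains a subsolution of \HJBSD{} in $B(x_0,r)\times(\tau-h,\tau)$ provided $\alpha r\leq \eta/(2L_H)$, where $L_H$ denotes a Lipschitz constant. Apply $\LCP(\Omega)$ at $(x_0,\tau)$ to obtain $\bar r,\bar h>0$; fix $r\leq\bar r$ and $h\leq\bar h$ small enough, and set $C:=B(x_0,r)\times(\tau-h,\tau)$. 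The comparison result in $C$ then gives
\[
    M^\eta=(\tilde u-v)(x_0,\tau)\leq\sup_{\bar C}(\tilde u-v)_+\leq\sup_{\partial_p C}(\tilde u-v)_+\,.
\]
On the bottom face $\{t=\tau-h\}$, the minimality of $\tau$ forces $\sup_x\phi(\cdot,\tau-h)<M^\eta$, and hence $(\tilde u-v)\leq\phi<M^\eta$ there; on the lateral face $\{|x-x_0|=r\}$, the quadratic penalty yields $(\tilde u-v)\leq\phi-\alpha r^2\leq M^\eta-\alpha r^2<M^\eta$. Since $\partial_p C$ is compact and $\tilde u-v$ is upper semi-continuous, this forces $\sup_{\partial_p C}(\tilde u-v)_+<M^\eta$, contradicting the previous display.

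The main obstacle is precisely this \emph{strict} separation on the lateral face at the top time $\tau$: without the localization $\alpha|x-x_0|^2$, other maximum points of $\phi(\cdot,\tau)$ could cluster near $x_0$ and make $\sup_{\partial_p C}\phi=M^\eta$, leaving no room for a contradiction, a phenomenon due to having only semi-continuity on $u$ and $v$. The Lipschitz dependence of the $H^k$ on $p$ provided by \hyp{\BL} is exactly what is needed to absorb the extra gradient term $-2\alpha(x-x_0)$ produced by the penalty into the subsolution inequality, so that $\LCP$ still applies to $\tilde u$.
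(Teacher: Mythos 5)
Your converse argument is correct and essentially reproduces the paper's own proof: you take a maximum point of the (strictified) difference at the minimal time, perturb the subsolution by a space penalty vanishing there --- which stays a subsolution thanks to the Lipschitz continuity in $p$ of the Hamiltonians coming from \hyp{\BL} --- and contradict the comparison provided by $\LCP(\Omega)$ on the parabolic boundary of a small cylinder, exactly the mechanism of the paper (which uses the penalty $\beta(\bar C(s-t)+(|y-x|^2+1)^{1/2}-1)$ where you use $-\eta t-\alpha|x-x_0|^2$). The only soft spot is the unbounded case: $-\beta|x|^2$ has unbounded gradient, so $u-\eta t-\beta|x|^2$ is not automatically a subsolution where you need it (you would first have to bound the location of the penalized maximum so that $2\beta|x|$ is small there), whereas the paper's penalization $\alpha\big(Ct+(1+|x|^2)^{1/2}\big)$ with $C$ large works directly since its spatial gradient is bounded by $\alpha$.
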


\begin{proof} Let $u,v$ be respectively a bounded usc subsolution $u$ and a
    bounded lsc supersolution $v$ of \HJBSD in $Q$. We consider
    $M=\sup_{\bar Q} (u-v)$. If $M\leq 0$ then we have
    nothing to prove, hence we may assume that $M>0$.

In order to replace the ``$\sup$'' by a ``$\max$'' if $\Omega$ is unbounded, we argue as in
\cite{BBC1,BBC2} and we replace $u$ by 
$$ u_\alpha (x,t):= u(x,t) -\alpha (Ct +(1+|x|^2)^{1/2}\; ,$$ 
for $0\leq \alpha \ll 1$. Proving the comparison inequality for $u_\alpha$
instead of $u$ provides the result by letting $\alpha$ tend to $0$.

With this argument, we can consider $M_\alpha=\max_{\bar Q}(u_\alpha-v)$
and we denote by $(x,t)$ a maximum point of $u_\alpha-v$. In
addition, we may assume that $t$ is the minimal time for which there exists such
a maximum point. If $t=0$ or $x \in \partial \Omega$ then the result is proved,
hence we may also assume that $x\in \Omega$ and $t>0$.

Using the assumption, we know that there exists $\bar r,\bar h >0$ such that
such that $B_{\bar r}(x) \subset \Omega$, $\bar h \leq t$ and one has a
comparaison result in $B(x,r) \times (t-h,t)$ for any $r\leq \bar r$ and $h\leq
\bar h$.

Thus, in $Q_{r,h}:=B(x,r) \times (t-h,t)$ (where $r$ and $h$ will be chosen later), we
change $u_\alpha (y,s)$ into
$$
u_{\alpha,\beta} (y,s):=u_\alpha (y,s)-\beta(\bar
C(s-t)+(|y-x|^2+1)^{1/2}-1)\; ,
$$
where $0<\beta\ll 1$. If $\bar C$ is large
enough, $u_{\alpha,\beta}$ is still a subsolution in $Q_{r,h}$ and as a
consequence of the comparison property, we have 
$$ u_{\alpha}(x,t)-v(x,t) \leq \max_{\partial_p Q_{r,h}}(u_\alpha (y,s)-\beta(\bar
(s-t)+(|y-x|^2+1)^{1/2}-1)-v(y,s))\; .$$ 
But if $y\in \partial B(x,r)$
$$
\beta(\bar C(s-t)+(|y-x|^2+1)^{1/2}-1) = \beta(\bar C(s-t)+(r^2+1)^{1/2}-1)\geq
\beta(-\bar Ch+(r^2+1)^{1/2}-1)\; ,
$$
and, since $(r^2+1)^{1/2}-1>0$, if we choose (and fix) $h$ small enough, we have
$\beta(-\bar Ch+(r^2+1)^{1/2}-1)>0$. Therefore, for such $h$,
$$
\max_{\partial B(x,r) \times [t-h,t]} ( u_\alpha (y,s)-\beta(\bar
C(s-t)+(|y-x|^2+1)^{1/2}-1)-v(y,s))<M_\alpha\; .
$$

On the other hand, for $s=t-h$, since $t$ is the minimal time for which the
maximum $M_\alpha$ is achieved, we have $u_\alpha (y,s)-v(y,s) <M_\alpha$ and
$\beta(\bar C(s-t)+(|y-x|^2+1)^{1/2}-1)\geq -\beta \bar C h$. Since $h$ is
fixed, choosing $\beta$ small enough, we have $$ \max_{\overline
    \Omega}(u_\alpha (y,t-h)-\beta(-\bar
Ch+(|y-x|^2+1)^{1/2}-1)-v(y,t-h))<M_\alpha\; .$$

This shows that $\max_{\partial_p Q_{r,h}}(u_\alpha (y,s)-\beta(\bar
C(s-t)+(|y-x|^2+1)^{1/2}-1)-v(y,s))< M_\alpha$, a contradiction since
$u_{\alpha}(x,t)-v(x,t)=M_\alpha$. Therefore the maximum of $u_{\alpha}-v$ is
achieved either on $\partial \Omega$ or for $t=0$ and the complete comparison
result is obtained by letting $\alpha$ tend to $0$.  \end{proof}

In the direction of getting local comparison, we use below that
under \Q{k} we have a partial local comparison result 
for any ball which does not intersect the $\Man{j}$ for $j<k$
\begin{proposition} Let $B$ be a ball in $\R^N$ such that
    $B\cap\Man{j}=\emptyset$ for any $j<k$. If \Q{k} holds, then one has a
    comparison between sub and supersolutions of \HJBSD in $B\times
    (t_1,t_2]$ for any $0\leq t_1 <t_2 \leq T$, namely $$\| ( u-
    v)_+\|_{L^\infty(\bar Q)}\leq
    \| (u- v)_+\|_{L^\infty(\partial_p Q)}\,,$$
    where $Q:=B\times (t_1,t_2)$.
\end{proposition}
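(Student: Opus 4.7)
The plan is to reduce the desired comparison to the no-interior-maximum statement \Q{k} via the standard trick of turning $u$ into a strict subsolution by time penalization. Set $M_\partial:=\|(u-v)_+\|_{L^\infty(\partial_p Q)}$ and $M:=\max_{\bar Q}(u-v)$. Since $B$ is a bounded ball, $\bar Q$ is compact and this maximum is attained (by upper semicontinuity of $u-v$). I argue by contradiction and assume $M>M_\partial$.

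First I would perturb: define $u_\eta(x,t):=u(x,t)-\eta t$ for $\eta>0$. The hypothesis $B\cap\Man{j}=\emptyset$ for $j<k$ means exactly that $B\subset \Man{k}\cup\Man{k+1}\cup\cdots\cup\Man{N}$, so the only strata meeting $B$ are those with index $j\geq k$. On each such $\Man{j}\cap B$, if $\phi$ is a test function for which $u_\eta-\phi$ has a local maximum at $(x,t)$, then $u-(\phi+\eta t)$ has a local maximum there, hence the subsolution inequality for $u$ yields $\phi_t+\eta+H^j(x,t,D\phi)\leq 0$, i.e. $\phi_t+H^j(x,t,D\phi)\leq -\eta$. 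Thus $u_\eta$ is a strict subsolution of \HJBSD in $Q$ with rate $\eta$.

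Next I would localize. Since $u_\eta\leq u$, one has $\|(u_\eta-v)_+\|_{L^\infty(\partial_p Q)}\leq M_\partial$, while on the other hand $\max_{\bar Q}(u_\eta-v)\geq M-\eta\,t_2$. Choosing $\eta<(M-M_\partial)/t_2$, the maximum of $u_\eta-v$ on $\bar Q$ strictly exceeds $M_\partial$ and therefore cannot be attained on $\partial_p Q$; hence there exists an interior maximum point $(x_\eta,t_\eta)\in B\times(t_1,t_2]$. This is precisely a local maximum of $u_\eta-v$ in $B\times(t_1,t_2]$, with $u_\eta$ a strict subsolution and $v$ a supersolution of \HJBSD on a ball included in $\Man{k}\cup\cdots\cup\Man{N}$; applying \Q{k} gives a contradiction, so $M\leq M_\partial$ as required.

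The only verification needed is that the time-penalization preserves the strict subsolution inequality simultaneously on every stratum met by $B$, which is immediate from the definition; all the substantial content is packaged into the inductive hypothesis \Q{k}, so this step is essentially routine.
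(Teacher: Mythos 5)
Your proof is correct and follows essentially the same route as the paper: replace $u$ by the strict subsolution $u-\eta t$, use \Q{$k$} to exclude maximum points of $(u-\eta t)-v$ in $B\times(t_1,t_2]$, and conclude (the paper lets $\eta\to 0$ directly rather than arguing by contradiction with a fixed small $\eta$, but this is the same argument).
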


\begin{proof} For any $\eta >0$, $u-\eta t$ is a strict subsolution of \HJBSD
    in $B\times (t_1,t_2]$. Looking at a maximum point of $(u-\eta t) -
    v$ in $\bar Q$, we see that \Q{k} implies that such a maximum point cannot
    be in $B\times (t_1,t_2]$. Therefore all the maximum points are on
    $\partial_p Q$ and therefore 
    $$\| ( u-\eta t- v)_+\|_{L^\infty(\bar Q)}\leq
    \| (u-\eta t - v)_+\|_{L^\infty(\partial_p Q)}\,.$$
    Letting $\eta$ tends to $0$ provides the result.  
\end{proof}

\subsection{Properties of sub and supersolutions}

A consequence of the partial local comparison result deriving from \Q{k}
is a sub-dynamic programming principle for subsolutions

\begin{lemma}\label{lem:sub.Qk.dpp}
Let $u$ be an usc subsolution of \HJBSD and assume that \Q{k} is true for
some $k\in\{0,...,N\}$. Then, for any bounded domain $\Omega\subset\R^N$ such that
    $\bar \Omega\cap\Man{j}=\emptyset$ for any $j< k$, the subsolution 
    $u$ satisfies a sub-dynamic programming principle in
    $\bar \Omega\times[0,T]$: namely, for any $\sigma\in[0,t]$, 
 \begin{equation}\label{ineq:sub.Qk.dpp}
        u(x,t)\leq \inf_{(\X,L)\in\cT(x,t)}
        \sup_{\theta\in\mathcal{S}(\Omega)}\int_0^{\theta\wedge \sigma}
        l\big(\X(s),t-s\big)\ds+u\big(\X(\theta\wedge
        \sigma),t-(\theta\wedge\sigma)\big)\,,
    \end{equation}
where  $\mathcal{S}(\Omega)$ is the set of all stopping times $\theta$
such that $\X(\theta)\in\partial \Omega$.
\end{lemma}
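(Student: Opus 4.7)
The plan is to build an auxiliary function $W$ on $\bar\Omega\times[t-\sigma,T]$ which is a supersolution of \HJBSD on its interior, to verify $u\le W$ on the parabolic boundary, and then to conclude by the comparison result made available by \Q{k}. Setting $t_0:=t-\sigma$, I would define for $(y,s)\in\bar\Omega\times[t_0,T]$
\begin{equation*}
W(y,s):=\inf_{(\X,L)\in\cT(y,s)}\sup_{\theta\in\mathcal{S}(\Omega)}\Big\{\int_0^{\theta\wedge(s-t_0)} l(\X(\tau),s-\tau)\,d\tau + u\big(\X(\theta\wedge(s-t_0)),s-\theta\wedge(s-t_0)\big)\Big\},
\end{equation*}
with the convention that when the trajectory never reaches $\partial\Omega$, the value $\theta=+\infty$ (so that $\theta\wedge(s-t_0)=s-t_0$) is included in the supremum. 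The target inequality is then exactly $u(x,t)\le W(x,t)$.

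The substantive step is to show that (the lsc envelope of) $W$ is a supersolution of $w_t+H(y,s,Dw)=0$ in $\Omega\times(t_0,T]$. This would follow from the super-DPP that $W$ enjoys by construction: for small $\tau>0$ and $(y,s)\in\Omega\times(t_0,T]$ with $\tau<s-t_0$,
\begin{equation*}
W(y,s)\ge \inf_{(\X,L)}\sup_{\theta}\Big\{\int_0^{\tau\wedge\theta} l(\X(r),s-r)\,dr+W\big(\X(\tau\wedge\theta),s-\tau\wedge\theta\big)\Big\},
\end{equation*}
obtained by splitting the time interval at $\tau$ and bounding the tail value by $W$ itself. Passing from this super-DPP to the viscosity supersolution inequality is the standard converse of the argument used in the proof of Lemma~\ref{lem:super.dpp}.

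The parabolic boundary check is direct: at $s=t_0$, the identity $s-t_0=0$ forces $\theta\wedge(s-t_0)=0$, so the integral vanishes and $W(y,t_0)=u(y,t_0)$; and for $y\in\partial\Omega$ the choice $\theta=0$ is admissible (since $\X(0)=y\in\partial\Omega$) and yields $W(y,s)\ge u(y,s)$. To close the argument, observe that $\bar\Omega\cap\Man{j}=\emptyset$ for $j<k$ means that every point of $\bar\Omega$ has a neighborhood ball contained in $\Man{k}\cup\cdots\cup\Man{N}$; by the proposition preceding this lemma, partial local comparison holds in each such ball, so $\LCP(\Omega)$ holds and Lemma~\ref{FLTG} (applied after a time shift to the interval $(t_0,T]$) upgrades it to comparison in $\Omega\times(t_0,T]$. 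Hence $u\le W$ throughout $\bar\Omega\times[t_0,T]$, and evaluating at $(x,t)$ gives (\ref{ineq:sub.Qk.dpp}).

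The hard part will be establishing the supersolution property of $W$: the $\sup_{\theta}$ makes $W$ a game-type value rather than a pure control value, so deriving the super-DPP needs some care in the handling of the stopping times. What saves us is that we only need the ``super'' direction, which is naturally compatible with the outer supremum over $\theta$ — restricting the stopping time to its intersection with $[0,\tau]$ and bounding the tail by $W$ itself produces the required lower bound, after which a standard viscosity argument converts it into the pointwise HJB inequality.
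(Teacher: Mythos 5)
Your overall architecture is the same as the paper's: both proofs compare $u$ with the value function of the exit-time problem on $\Omega$ whose exit cost is (a version of) $u$, and both use \Q{k} to make that comparison legitimate. The difference is that you take $u$ itself as the exit cost, whereas the paper first replaces $u$ by a decreasing sequence $(u_n)$ of \emph{continuous} functions with $u_n\downarrow u$, proves $u\le w_n$ for the value function $w_n$ with exit cost $u_n$ (invoking the maximal-subsolution characterization of \cite[Thm 5.7]{Ba}), and only then passes to the limit $n\to\infty$ along near-optimal stopping times. This is not a cosmetic difference: it is exactly where your argument has a gap.

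The gap is in the final comparison step. The comparison result you want to invoke (Definition 5.1 together with the Proposition and Lemma~\ref{FLTG}) compares a bounded usc subsolution with a bounded \emph{lsc} supersolution, and controls $(u-v)_+$ on $\bar Q$ by its values on $\partial_p Q$. Your $W$ is the value of an exit-time problem whose exit cost is only usc; such a $W$ is in general neither lsc nor usc, so to apply the comparison you must replace it by its lsc envelope $W_*$. Your boundary check establishes $u\le W$ on $\partial_p Q$ (via the admissible choice $\theta=0$ on $\partial\Omega$ and $\theta\wedge 0=0$ at $s=t_0$), but what the comparison actually requires is $u\le W_*$ on $\partial_p Q$, and this can fail: if $u$ has an usc ``spike'' at a boundary point $y_0$ (i.e.\ $u(y_0,s)$ strictly exceeds $\limsup_{y\to y_0,\,y\neq y_0}u(y,s)$), then $W(y_j,s_j)$ for interior points $y_j\to y_0$ only sees the values of $u$ at the actual exit points of trajectories, which generically miss the spike, so $W_*(y_0,s)<u(y_0,s)$. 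The comparison then only yields $u(x,t)\le W_*(x,t)+\sup_{\partial_p Q}(u-W_*)_+$ with a possibly positive error, not \eqref{ineq:sub.Qk.dpp}. (A related, smaller issue is that deriving the viscosity supersolution inequality from your super-DPP also has to be carried out for $W_*$ rather than $W$, which again needs some care when $W$ is not lsc.) The monotone continuous approximation $u_n\downarrow u$ in the paper is precisely the device that removes both problems: with continuous exit cost the exit-time value function is well understood and directly dominates every subsolution lying below $u_n$ on $\partial_p Q$, and the inequality for $u$ is then recovered in the limit using $\limsup_n u_n(z_n)\le u(\bar z)$ for $z_n\to\bar z$, which is where the upper semicontinuity of $u$ is actually used. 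If you incorporate such an approximation, the rest of your plan goes through.
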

It is clear that if $\tau_\Omega:=\sup\big\{s >0: \X(s)\in \Omega\big\}$ is the
first exit time from $\Omega$ and $\tau_{\bar \Omega}:=\sup\big\{s >0: \X(s)\in
    \bar \Omega\big\}$ the first exit time from $\bar \Omega$, we have
$\tau_\Omega\leq\theta\leq\tau_{\bar \Omega}$.

\begin{proof}
    Since $u$ is usc, we can approximate it by a decreasing sequence $\{u_n\}$ of
    continuous functions. Then we consider initial-boundary value problem
    (associated to an exit time control problem) 
    $$\begin{cases}
      w_t+H(x,t,Dw)=0 & \text{in }Q:=\Omega\times(0,T)\,,\\
      w(x,0)=u_n(x,0) & \text{on }\bar \Omega\,,\\
      w(x,t) = u_n(x,t) & \text{on } \partial \Omega\times(0,T)\,.
    \end{cases}$$
Since $u$ is an usc subsolution of \HJBSD and since $u \leq u_n$ on
$\partial_p Q$, then $u$ is a subsolution of this problem. On the other hand,
using that \Q{k} is true, the arguments of \cite{Ba} (Section 5.1.2, see Thm 5.7)
show that
$$ w_n(x,t):= \inf_{(\X,L)\in\cT(x,t)}
        \sup_{\theta\in\mathcal{S}(\Omega)}\bigg[\int_0^{\theta\wedge \sigma}
        l\big(\X(s),t-s\big)\ds+u_n\big(\X(\theta\wedge
        \sigma),t-(\theta\wedge\sigma)\big)\bigg]\,,
$$
is the maximal subsolution (and solution) of this initial value problem.
Therefore $u \leq w_n$ in $\bar Q$.  In order to obtain the result, we choose
any (fixed) trajectory $\X$ and any
cost $L$ and write that, by the above inequality 
$$ 
u(x,t) \leq\int_0^{\theta_n\wedge \sigma}
        l\big(\X(s),t-s\big)\ds+u_n\big(\X(\theta_n \wedge
        \sigma),t-(\theta_n \wedge\sigma)\big)\,,
$$
where $0\leq \theta_n \leq T$ is the stopping time where the supremum is
achieved. But $\theta_n$ is bounded and $\X(\theta_n \wedge \sigma)\in \partial
\Omega$ which is compact. Therefore extracting some subsequence we may assume
that $\theta_n \to \bar\theta$ and $\X(\theta_n \wedge  \sigma) \to \X(\bar
\theta \wedge \sigma)$. But, by using that $(u_n)_n$ is a decreasing sequence,
it is easy to prove that
$$
\limsup_n\, u_n\big(\X(\theta_n \wedge \sigma),t-(\theta_n\wedge\sigma)\big)
\leq u\big(\X(\bar \theta \wedge \sigma),t-(\theta\wedge\sigma)\big)\; ,
$$ 
and therefore
$$ 
  u(x,t) \leq \int_0^{\bar \theta\wedge \sigma}
  l\big(\X(s),t-s\big)\ds+u\big(\X(\bar\theta \wedge
  \sigma),t-(\bar\theta \wedge\sigma)\big)\,,
$$
Passing to the supremum in the right-hand and using that this is true for any
choice of $\X,L$ yields the result. 
\end{proof}

\begin{remark} 
    It is worth pointing out that, if $x\in \Omega$ then there exists $\eta >0$
    such that $\tau_\Omega >\eta$ for any trajectory $\X$. This is a consequence
    of the boundedness of $\BL$. Therefore, if we take $\sigma <\eta$, we
    clearly have
    $$ u(x,t)\leq \inf_{(\X,L)\in\cT(x,t)}\left\{
      \int_0^{\sigma} l\big(\X(s),t-s\big)\ds+u\big(\X(
        \sigma),t-(\sigma)\big)\right\}\,,
    $$
    a more classical formulation of the Dynamic Programming Principle.
\end{remark}

The next step is the
\begin{lemma}\label{RegTan} Let $x$ be a point in $\Man{k}$ and $t,h >0$. 
    There exists $r'>0$ such that if $u$ is a subsolution of \HJBSD in $B(x,r') \times
    (t-h,t)$, then for any $a\in(0,r')$, there exists a sequence of usc functions
    $(u^\e)_\e$ in $\overline{B}(x,r'-a) \times (t-h/2,t)$ such that \\[2mm] 
    $(i)$ the $u^\e$ are subsolutions of \HJBSD in $B(x,r'-a) \times
    (t-h/2,t)$,\\[2mm]
    $(ii)$  $\limssup u^\e = u$.
    \footnote{\,We recall that $\displaystyle \limssup u^\e (x,t)=
        \limsup_{\HRL} u^\e (y,s)$.}\\[2mm]
    $(iii)$ The restriction of $u^\e$ to
    $\Man{k}\cap \Big[B(x,r'-a)  \times
    (t-h/2,t)\Big]$ is $C^1$.
\end{lemma}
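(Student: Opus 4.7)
The plan is first to reduce, via the change of variables $\chi=(\Psi^x)^{-1}$ provided by the \RS structure and Definition~\ref{def:AHG}, to the case of an \AFS, after which $x\in\Man{k}$ sits in the affine subspace $x+V_k$ and the \LAHF assumptions on the Hamiltonians $H^j$ hold in $B(x,r')\times(t-h,t)$. I choose $r'$ small enough so that Lemma~\ref{flat-lem} applies: translations in $V_k$ by vectors of length smaller than a fixed constant preserve each stratum $\Man{j}$ ($j\geq k$) intersecting $B(x,r')$. I then work directly in this flat setup and perform a two-step regularization.

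\emph{First step: directional sup-convolution.} For $\eps>0$, set
$$
u^\eps(y,s):=\sup_{(\xi,\tau)\in V_k\times\R}\Big\{u(y-\xi,s-\tau)-\eps^{-2}(|\xi|^2+\tau^2)\Big\},
$$
the supremum being restricted to those $(\xi,\tau)$ with $(y-\xi,s-\tau)\in B(x,r')\times(t-h,t)$. I emphasize that the sup-convolution is performed \emph{only} in the tangent direction $V_k$ and in time, not in $V_k^\bot$. Standard arguments give $u^\eps$ semi-convex in the tangent and time variables, Lipschitz of order $O(\eps^{-1})$, and $\limssup u^\eps=u$ on a reduced cylinder as soon as the maximizers $(\xi^\ast,\tau^\ast)$ are of order $O(\eps)$, which forces me to shrink the domain by $a\gg\eps$. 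The crucial point is that at a local maximum $(y,s)\in \Man{j}\times(t-3h/4,t)$ of $u^\eps-\phi$ with $j\geq k$, Lemma~\ref{flat-lem} together with $\xi^\ast\in V_k$ forces $y-\xi^\ast\in \Man{j}$; the subsolution property of $u$ on $\Man{j}$ at the shifted point, combined with \TC applied to $H^j$, then produces the perturbed viscosity inequality
$$
\phi_t(y,s)+H^j\bigl(y,s,D\phi(y,s)\bigr)\leq o_\eps(1).
$$

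\emph{Second step: tangential mollification.} Let $\rho^\delta$ be a standard mollifier on $V_k\times\R$ supported in a ball of radius $\delta$, and define
$$
u^{\eps,\delta}(y,s):=\int_{V_k\times\R} u^\eps(y-\zeta,s-\theta)\,\rho^\delta(\zeta,\theta)\,\d\zeta\,\d\theta.
$$
Lemma~\ref{flat-lem} ensures once more that for $(y,s)\in \Man{j}\times(t-h/2,t)$ with $j\geq k$, the integrand only samples points whose spatial component remains in $\Man{j}$. Using the convexity of $H^j$ in $p$ (inherited from the definition of $H^j$ as a sup), together with \LP and \TC to absorb the $O(\delta)$-tangential shift in the Hamiltonian, one gets
$$
(u^{\eps,\delta})_t+H^j\bigl(y,s,Du^{\eps,\delta}\bigr)\leq o_\eps(1)+O(\delta/\eps)
$$
in the viscosity sense on $\Man{j}\cap (B(x,r'-a)\times(t-h/2,t))$. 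Choosing $\delta=\delta(\eps)\ll\eps$ makes the right-hand side $o_\eps(1)$, and subtracting a small $\eta(\eps)\,s$-term restores an honest subsolution of \HJBSD (up to relabeling). Since $\Man{k}\cap B(x,r')=(x+V_k)\cap B(x,r')$ and $\rho^\delta$ smooths in every variable parametrizing $\Man{k}\times\R$, the restriction of $u^{\eps,\delta}$ to $\Man{k}\cap (B(x,r'-a)\times(t-h/2,t))$ is $C^\infty$, and in particular $C^1$. The identity $\limssup u^{\eps,\delta}=u$ is inherited from $\limssup u^\eps=u$ via an $\eps$-dependent choice of $\delta$.

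The main obstacle is the compatibility of the two regularization errors: sup-convolution inflates the Lipschitz constant of $u$ to $O(\eps^{-1})$, which is then multiplied by $O(\delta)$ by the tangential mollification through \TC. Keeping the ordering $\delta(\eps)\ll\eps\ll a$ while preserving the subsolution inequality \emph{simultaneously} on every stratum $\Man{j}$ with $j\geq k$ is the delicate bookkeeping. Both steps rest crucially on Lemma~\ref{flat-lem}, which confines every shift to stay within-stratum --- a property that would fail if I convolved in the $V_k^\bot$ direction --- and on the \LAHF hypotheses, which provide the uniform $p$-Lipschitz modulus and the tangential continuity modulus needed to turn ``test on $\Man{j}$ at a shifted point'' into ``test on $\Man{j}$ at the original point''.
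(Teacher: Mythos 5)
Your overall architecture coincides with the paper's: reduce to an \AFS by the change of variables, sup-convolve only in the $V_k$- and time-directions, then mollify in those same directions, with Lemma~\ref{flat-lem} guaranteeing that both operations only mix values of $u$ taken on a fixed stratum, and finally subtract a small linear-in-time correction. The genuine gap is in your first step, where you assert that ``standard arguments'' give $\phi_t+H^j(y,s,D\phi)\leq o_\e(1)$ for the plain sup-convolution with penalization $\e^{-2}(|\xi|^2+\tau^2)$. Under \TC, i.e. $|H^j(y_1,t,p)-H^j(y_2,t',p)|\leq C_1\{|y_1-y_2|+m(|t-t'|)\}|p|+m(\cdot)$, and with $u$ merely bounded and usc (no continuity is available here --- that is the whole point of the lemma), the error incurred when moving the Hamiltonian from the shifted point back to $(y,s)$ is $C_1|\xi^*|\,|D\phi(y,s)|+C_1m(|\tau^*|)|D\phi(y,s)|+m(\cdot)$. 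At the maximum point the tangential part of $D\phi(y,s)$ is the penalization gradient $2\xi^*/\e^2$ associated with an optimal shift, so the first term is of order $|\xi^*|^2/\e^2$, which is only bounded by the oscillation of $u$ and is \emph{not} $o_\e(1)$: for a usc function the optimal shift need not satisfy $|\xi^*|^2/\e^2\to0$ uniformly (it would if $u$ were uniformly continuous, but precisely that hypothesis is forbidden). This is why the paper's penalization carries the weight $\exp(K\,\cdot)$ with $K$ large compared with $C_1$: the time-derivative of the weighted penalization produces a good-sign term $K e^{K\cdot}\bigl(|\xi^*|^2/\e_1^2+|\tau^*|^2/\alpha_1^2\bigr)$ which absorbs these quadratic errors (Ishii's trick, as in \cite{BBC1,BBC2}). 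Without this, or an equivalent device, your step-one inequality fails.

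Two related points. First, you use a single parameter $\e$ for space and time, whereas \TC is Lipschitz-type (times $|p|$) only in the tangential space variables and merely of modulus type in time; the resulting time-error is $m(C\e)\cdot O(1/\e)$, which need not vanish. The paper insists on two parameters with $\alpha_1\ll\e_1$ exactly for this reason. Second, for $j>k$ the Hamiltonian $H^j$ also sees the components of $D\phi$ normal to $\Man{k}$, and $\phi_t$ enters as well; these are controlled through the coercivity in \NC (which is also what yields the Lipschitz continuity of the sup-convolution in the $y_2$-variables), a quantitative use of \NC that your argument never makes. Your second step (mollification with $\delta\ll\e$, kept within-stratum by Lemma~\ref{flat-lem}) and the conclusions $(ii)$--$(iii)$ are fine once step one is repaired along these lines.
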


\begin{proof}
The proof is strongly inspired from \cite{BBC2}, with the
additional use of Lemma~\ref{flat-lem}. In fact, by using the definition of a
regular stratification (Definition~\ref{def:RS}), we can prove the result for
$\tilde u(y):=u(\Psi^{-1}(y))$ in the case of an \AFS and then make the
$\Psi$-change back to get the $u^\e$'s in the real domain.

Therefore, from now on, we assume that we are in the case of an \AFS and we still
denote by $u$ the function $\tilde u$ which is defined above. We are also going
to assume that $k\geq 1$ and we will make comments below on the easier $(k=0)$--case.

    We first pick a $r_0>0$ small enough so that $r_0<r(x)$ as in the Definition
    of Regular Stratifications, Definition~\ref{def:RS}. Then we take
    $0<r'<r_0$ so that $\Psi^x(B(x,r'))\subset B(\Psi^x(x),r)$ where $r$
    is defined in \AFS-$(i)$. This way, we make sure that we can use
    Lemma~\ref{flat-lem}, which will be needed below.

The next step is a sup-convolution in the $\Man{k}$-direction. Without loss of
generality, we can assume that $x=0$, and writing the coordinates in $\R^N$ as
$(y_1,y_2)$ with $y_1\in \R^k$, $y_2 \in \R^{N-k}$ we may assume that
$\Man{k}:=\{(y_1,y_2):\ y_2=0\}$.

With these reductions, the sup-convolution in the $\Man{k}$ directions (and also
the time direction) can be written as 
$$
u^{\e_1, \alpha_1}_1(y_1,y_2,s):=\max_{z_1 \in \R^k, s' \in
    (t-h,t)}\Big\{u(z_1,y_2,s')-\exp(Kt) 
    \left(\frac{|z_1-y_1|^2}{\e_1^2}+\frac{|s-s'|^2}{\alpha_1^2}\right)\Big\}\;,
$$
for some large enough constant $K>0$. We point out here that in the case $k=0$,
this sup-convolution is done in the $t$-variable only, as for the 
the usual convolution below.

By classical arguments, the function
$u^{\e_1, \alpha_1}_1$ is Lipschitz continuous in $y_1$ and $s$ and the normal
controllability assumption implies both that $u^{\e_1, \alpha_1}_1$ is Lipschitz
continuous in $y_2$ and allows to prove that, for $\e_1$ small enough and
$\alpha_1\ll \e_1$, $u^{\e_1, \alpha_1}_1(y,s)-c(\e_1,\alpha_1)s$ is still a
subsolution of \HJBSD in $B(x,r'-a/2) \times (t-3h/4,t)$ for some constant
$c(\e_1,\alpha_1)$ converging to $0$ as $\e_1\to 0$ and $\alpha_1\to 0$ with
$\alpha_1 \ll \e_1$,  we refer to \cite{BBC1,BBC2} for more
    details.
We point out that we need different parameters for this
sub-convolution procedure in space and in time because of the different
regularity of the Hamiltonian $H^k$ in space and time: while we require some
Lipschitz continuity in $y_1$ (up to the $|p|$-term), we have only the
continuity in $s$.

In this last statement, Lemma~\ref{flat-lem} plays a key role since it can be
translated as: if $(y_1,y_2) \in \Man{l} \cap B(x,r')$ for some $l\geq k$, then 
in the sup-convolution, the points $(z_1,y_2)$ with $z_1\in\R^k$ which are in
$B(x,r')$ still belong to $\Man{l}$. In other words, if $(y_1,y_2)\in\Man{l}$, 
for $\eps_1$ small enough the sup-convolution only takes into account values of 
$u$ taken on $\Man{l}$.

Thus, checking the subsolution condition $\tilde H^l\leq0$ for
$u^{\eps_1,\alpha_1}_1-c(\e_1,\alpha_1)t$ at $(y_1,y_2)\in\Man{l}$, is done by
considering the similar subsolution condition $\tilde H^l\leq0$ for $u$ at
points $(z_1,y_2)\in\Man{l}$.  We drop the details since the proof follows
classical arguments.

Next we regularize $u^{\eps_1,\alpha_1}_1-c(\e_1,\alpha_1)t$ by a standard
mollification argument, and still in the $(y_1,s)$-variables. If
$(\rho_{\e_2})_{\e_2}$ is a sequence of mollifiers in $\R^{k+1}$, $\rho_{\e_2}$
having a support in $B_k(0,\e_2)\times (-\e_2,0)$, where $B_k(0,\e_2)$ is the
ball of center $0$ and radius $\e_2$ in $\R^k$, we set 
$$
u^{\e_2}_2 (y_1,y_2,s):= \int_{\R^{k+1}}\, [u^{\e_1, \alpha_1}_1(z_1,y_2,s')-c(\e_1,\alpha_1)s']
\rho_{\e_2}(y_1-z_1,s-s')dz_1ds'\; . $$
By standard arguments, $u^{\e_2}_2$ is $C^1$ in $y_1$ and $s$, for all $y_2$ and
by the same argument as above, this convolution is done $\Man{l}$ by $\Man{l}$
(or $H^l$ by $H^l$: there is no interference between $H^l$ and $H^{l'}$ for
$\eps_2$ small enough), and, for $\e_2$ small enough, $u^{\e_2}_2
(y_1,y_2,s)-d(\e_2)s$ is still a subsolution of \HJBSD for some $d(\e_2)$
converging to $0$ as $\e_2 \to 0$; hence the proof is classical.

The conclusion follows from the fact that $u^{\e_2}_2 \to u^{\e_1, \alpha_1}_1$
uniformly as $\e_2 \to 0$ and $u^{\e_1, \alpha_1}_1 \downarrow u$ as $\e_1 \to
0$. Therefore we can take, for $u^\e$, $u^{\e_2}_2-d(\e_2)s$ with $\e_1$ small
and $\e_2$ small compared to $\e_1$.
\end{proof}

Concerning the supersolutions now, a key argument that was used in
\cite{BBC1,BBC2} is that they satisfy
an alternative: either the trajectories are leaving the discontinuity set, or
there is a strategy which allows to remain on this set and we deduce an
inequation for the tangential Hamiltonian there.

The situation is more complex here since the discontinuity set is composed of
submanifolds with different dimensions. But still, a similar  alternative can be
derived. In order to formulate it, let us introduce some notations.

Consider a point $x_0\in\Man{k}$ for some
$k\in\{0,...,(N-1)\}$, $t_0\in[0,T]$ and a sequence $(x_n,t_n)\to (x_0,t_0)$.  For any
$j\in\{0,...,(N-1)\}$, we denote by $\tau_n(j)$ the reaching time 
$$
\tau_n(j):=\inf\{s\geq0:X_{x_n,t_n}(s)\in\Man{j}\}\,
$$
where $(X_{x_n,t_n},L_{x_n,t_n})$ is a given solution of the differential
inclusion such that $(X_{x_n,t_n}(t_n),L_{x_n,t_n}(t_n))=(x_n,0)$.  Notice that
by \AFS-$(i)$, $\tau_n(j)>0$ for any $j<k$. 

\begin{lemma}\label{prop:super.alternative}
Let $v$ be a bounded lsc viscosity supersolution of \HJBSD and $\phi \in
C^{1}(\R^N\times (0,T])$ be a test-function such that the restriction of
$v-\phi$ to $\Man{k}\times(0,T]$ has a local minimum point at
$(x,t)\in\Man{k}\times(0,T]$. Then the following alternative
holds

\noindent{\bf A)} either there exists $\bar\tau>0$, a sequence $(x_n,t_n)\to
(x,t)$ and a sequence of trajectories $(X_{x_n,t_n},L_{x_n,t_n})$ satisfying
$\tau_n(j)\geq\bar\tau$
for any $j\leq k$ and
$$v(x_n,t_n)\geq \int_0^{\bar\tau} l\big(X_{x_n,t_n}(s),t_n-s\big)\ds +
v\big(X_{x_n,t_n}(\bar\tau),t_n-\bar\tau\big)\,;$$
{\bf B)} or $v_t(x,t)+H^k\big(x,t,Dv(x)\big)\geq0$ in the viscosity sense\,.
\end{lemma}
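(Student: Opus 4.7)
The proof is by contradiction, based on the super-dynamic programming principle (Lemma~\ref{lem:super.dpp}) applied at $(x,t)$. Suppose alternative (B) fails: after perturbing $\phi$ by a small quadratic $\delta(|y-x|^2+(s-t)^2)$, we may assume that $(x,t)$ is a strict local minimum of $v-\phi$ on $\Man{k}\times(0,T]$ with $v(x,t)=\phi(x,t)$, and that $\phi_t(x,t)+H^k(x,t,D\phi(x,t))\leq-2\eta$ for some $\eta>0$. By the continuity of $H^k$ on $\Man{k}$ (inherited from the continuity of $\BL|_k$) and of $\phi$, this strict inequality propagates to a neighborhood $W$ of $(x,t)$: for $(y,s)\in W\cap(\Man{k}\times(0,T])$,
$$\phi_t(y,s)+H^k(y,s,D\phi(y,s))\leq-\eta. \qquad(\star)$$
The goal is to show that (A) must then hold.

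Apply Lemma~\ref{lem:super.dpp} at $(x,t)$ with $\sigma>0$ so small that any admissible trajectory stays in a ball $B(x,r)$ in which, by \AFS-(i), $\Man{k}$ is closed and is the only stratum of dimension $\leq k$. By standard compactness for differential inclusions with compact convex images (implicit in the proof of Lemma~\ref{lem:super.dpp}), there exists an optimal trajectory $(X,L)\in\cT(x,t)$ with $X(0)=x$ satisfying
$$v(x,t)\geq\int_0^\sigma l(X(s),t-s)\,\ds+v(X(\sigma),t-\sigma).$$
Let $A:=\{s\in[0,\sigma]:X(s)\in\Man{k}\}$, which is closed and contains $0$, with open complement $A^c$; set $\alpha^*:=\inf A^c\in[0,\sigma]$. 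Two regimes arise.

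If $\alpha^*>0$ or $A^c=\emptyset$, then $X(s)\in\Man{k}$ on $[0,\alpha^*]$, so by Rademacher and the closedness of $\Man{k}$ in $B(x,r)$, $(\dot X,\dot L)(s)\in\BL|_k(X(s),t-s)$ a.e.~on this interval. Combining super-DPP with $v\geq\phi$ on $\Man{k}$ at the endpoint $X(\alpha^*)\in\Man{k}$ and $v(x,t)=\phi(x,t)$, and then integrating $\phi$ along $X$, yields
$$0\geq\int_0^{\alpha^*}\big[l(s)+D\phi(X,t-s)\cdot\dot X(s)-\phi_t(X,t-s)\big]\,\ds.$$
Since $-\dot X\cdot D\phi-l\leq H^k$ along $X$ by definition of $H^k$, and $(\star)$ gives $-H^k-\phi_t\geq\eta$ along $X$, the integrand is at least $\eta$, so $0\geq\eta\alpha^*>0$, a contradiction. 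Therefore $\alpha^*=0$ with $A^c\neq\emptyset$: the trajectory leaves $\Man{k}$ arbitrarily close to $s=0$. Provided a component $(\alpha_n,\beta_n)$ of $A^c$ has $\alpha_n\to 0$ with $\beta_n-\alpha_n$ bounded below by some $c>0$, setting $(x_n,t_n):=(X(\alpha_n+(\beta_n-\alpha_n)/2),t-\alpha_n-(\beta_n-\alpha_n)/2)$ produces $x_n\to x$ with $x_n\in\Man{k+1}\cup\cdots\cup\Man{N}$, and by Bellman's principle the corresponding restriction of $(X,L)$ is an optimal trajectory from $(x_n,t_n)$ staying off $\Man{0}\cup\cdots\cup\Man{k}$ for time $\bar\tau:=c/2$, verifying (A). The main obstacle is the \emph{chattering} scenario where $A^c$ accumulates at $0$ with components of vanishing length: there, a weak-$L^\infty$ analysis of $(\dot X,\dot L)$ together with convexity of $\BL|_k(x,t)$ identifies the effective averaged control as tangential to $\Man{k}$, reducing back to the first regime's integration argument and recovering the same contradiction with $(\star)$.
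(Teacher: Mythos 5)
There is a genuine gap, and it sits exactly where the lemma is hard. Your whole argument runs along a single trajectory issued from $(x,t)\in\Man{k}$ that is optimal for the super-DPP of Lemma~\ref{lem:super.dpp} at a fixed horizon $\sigma$, and then (i) invokes the inequality \fer{ineq:super.dpp} at the intermediate time $\alpha^*$ (or at return times) along that same trajectory, and (ii) in the exit regime asserts ``by Bellman's principle'' that the restriction of $(X,L)$ starting from $(x_n,t_n)=\big(X(\theta),t-\theta\big)$ satisfies $v(x_n,t_n)\geq\int_0^{\bar\tau}l\,\ds+v(X(\bar\tau+\theta),t_n-\bar\tau)$, which is precisely what alternative \textbf{A)} demands. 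Neither step is available for a mere lsc supersolution: $v$ is not a value function, \fer{ineq:super.dpp} is a one-sided inequality valid only at the starting point for which it was invoked, and tails or restrictions of a minimizing trajectory need not satisfy it. Re-applying Lemma~\ref{lem:super.dpp} at $(x_n,t_n)$ would repair the inequality but produce \emph{new} trajectories, which may dive back into $\Man{k}$ immediately, destroying the lower bound $\tau_n(k)\geq\bar\tau$. This is why the paper never follows a trajectory from $(x,t)$ itself: it perturbs the test function into $\phi_\eps(z,s)=\phi(z,s)+\bar q\cdot(z-x)-\dist(z,\Man{k})^2/\eps^2$, obtains genuine minimum points $(x_\eps,t_\eps)$ of $v-\phi_\eps$ in $\R^N\times(0,T)$ which may lie off $\Man{k}$, and applies the super-DPP afresh at those points; alternative \textbf{A)} is then formulated with those points and with whatever trajectories the super-DPP furnishes there.

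The second gap is the case you call chattering, which you dismiss in one sentence although it is where the mechanism of the proof lives. Negating \textbf{B)} only controls the \emph{tangential} Hamiltonian $H^k$, i.e.\ pairs $(b,l)\in\BL$ with $b$ tangent to $\Man{k}$; during excursions off $\Man{k}$ the bound $-\dot X\cdot D\phi-\dot L\leq H^k$ is simply not available, and saying that the averaged control is tangential does not close the argument: one still has to handle endpoints off $\Man{k}$ (where $v\geq\phi$ is unknown, the minimum being taken only on $\Man{k}$) and to compare an averaged $(b,l)$, which is only close to $\BL(x,t)$ with a nearly tangential $b$, against the \emph{full} Hamiltonian at a gradient carrying a normal component. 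The paper resolves exactly this by choosing $\bar q$ minimizing $q\mapsto\phi_t(x,t)+H(x,t,D\phi(x,t)+q)$ over $(\Man{k})^\bot$ and proving via Danskin/convex duality that $\phi_t(x,t)+H^k(x,t,D\phi(x,t))\geq\varphi(\bar q)$; failure of \textbf{B)} then gives strict negativity of the full Hamiltonian at the corrected gradient $D\phi+\bar q$, and the sign of the $\dist^2/\eps^2$ term lets the averaging step (usc and convexity of $\BL$, their Step~2) produce the contradiction. Your proposal contains no substitute for this device, so the chattering case --- and hence the lemma --- is not proved. (Secondary points: the existence of an exactly optimal trajectory in Lemma~\ref{lem:super.dpp} and of excursion components with length bounded below are also asserted rather than justified, but they are minor compared with the two gaps above.)
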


\begin{proof} 
    Since the result is local, we can prove it only in the case of an
    \AFS, the general result being obtained by changing variables. Therefore, we
    assume in particular in the sequel that $\Man{k}$ is a subspace and even
    that
    $$\Man{k}=\{x\in \R^N;\ x_{k+1}=x_{k+2}=\cdots =x_N=0\}.$$

    If $(x,t)\in\Man{k}$ is a local minimum point of $v-\phi$ on
    $\Man{k}\times(0,T)$, we can assume that it is a strict local minimum point
    by standard arguments. As we already noticed, \hyp{\bf M}-$(iv)$
    implies that there exists $\tau_0>0$ such that
    $\tau_n(j)\geq\tau_0$ for all $j<k$. In order to ``push'' the minimum point
    away from $x\in\Man{k}$, we construct the following test-function
    $$
    \phi_\eps(z,s):=\phi(z,s)+q\cdot(z-x)-\frac{{\rm
    dist}(z;\Man{k})^2}{\eps^2}\,,
    $$
    where $\eps >0$ and $q \in (\Man{k})^\bot$, where $(\Man{k})^\bot$ is the
    vector space which is orthogonal to $\Man{k}$. We point out that
    $(\Man{k})^\bot$ can be identified with $\R^{N-k}$.

    In order to choose $q$, we introduce the function $\chi:\R^N\to\R$ defined by
    $$\chi(q):=\phi_t(x,t)+ H\big(x,t,D\phi(x,t)+ q\big)\,,$$
    which is convex and coercive in $\R^N$. In fact, we are interested in the
    restriction of $\chi$ to $(\Man{k})^\bot$ and we denote by $\varphi
   :=\chi_{/(\Man{k})^\bot}$. If the minimum of $\varphi$ is achieved at
    $\bar{q}\in (\Man{k})^\bot$, then the classical property for the
    subdifferential of a convex function at a minimum point ($0\in \partial
    \varphi(\bar{q})$) can be reinterpreted here as 
    $$ \Man{k} \cap \partial \chi(\bar{q})\neq \emptyset\; ,$$
    since $\partial(\chi_{/(\Man{k})^\bot})=(\partial\chi)_{/(\Man{k})^\bot}\;
    .$ This fact can easily be proved using the identification between
    $(\Man{k})^\bot$ and $\R^{N-k}$, the fact that, in $\R^{N-k}$, we have $0$
    in the subdifferential of $\varphi$ can be interpreted as the existence of
    an element in $\partial \chi(\bar{q})$ which is in $\Man{k}$.

    Finally, taking into account the definition of $H$, the fact that $\BL(x,t)$
    is convex and classical result on convex function, namely Danskin's Theorem which has to be
    translated again from $\R^{N-k}$ to $(\Man{k})^\bot$, then there exists
    $(b,l)\in\BL(x,t)$ such that 
    $$ \chi(\bar{q}) = \phi_t(x,t)-b\cdot (D\phi(x,t)+\bar q) - l \; ,$$
    and $b\in \Man{k}\cap \partial \chi(\bar{q})$.
 
    But we are in the \AFS case where $T_x\Man{k} = \Man{k}$ and the above
    property yields 
    $$\phi_t(x,t)+H^k(x,t,D\phi(x,t))=
      \phi_t(x,t)+ \sup_{\substack{(b,l)\in\BL(x,t)\\ b\in T_x\Man{k}}}\{-b\cdot
      D\phi(x,t)-l\}\geq \varphi(\bar{q})\,.
    $$
    If $\varphi(\bar{q}) \geq 0$, then {\bf B)} holds and we are done. Hence we
    may assume that $\varphi(\bar{q}) <0$.

    From now on we consider the function $\phi_\eps$ with the choice
    $q=\bar{q}$. Notice that, in this case $\phi_\eps=\phi$ on
    $\Man{k}\times(0,T]$: the distance term clearly vanishes and $\bar q$ is
    orthogonal to $z-x$ if $z\in\Man{k}$.

    Since $(x,t)$ is a strict local minimum point of $v-\phi$ on $\Man{k}\times
    (0,T)$, there exists a sequence $(x_\eps,t_\eps)$ of local minimum points of
    $v-\phi_\eps$ in $\R^N\times (0,T)$ which converges to $(x,t)$.  There are
    two possibilities.

    \bigskip

    \noindent \textbf{First case:} assume that for $\eps >0$ small enough,
    $(x_\eps,t_\eps)\in\Man{k}\times(0,T)$.
    
    On the one hand, $(v-\phi)$ and $(v-\phi_\eps)$ coincide
    on $\Man{k}\times(0,T)$ and $(v-\phi)$ has a strict local minimum at
    $(x,t)$, say in $V(x,t):=B(x,\eta)\times(t-h,t+h)$.
    On the other hand, $(v-\phi_\eps)$ has a local minimum at
    $(x_\eps,t_\eps)$ which converges to $(x,t)$. Hence, for $\eps$ small
    enough, $(x_\eps,t_\eps)\in V(x,t)$ and we deduce that necessarily for such
    $\eps$, $(x_\eps,t_\eps)=(x,t)$ by the strict local minimum property.

    Then, writing the supersolution viscosity inequality reads
    $$ 
    0\leq \phi_t(x,t)+ H\big(x,t,D\phi(x,t)+ \bar{q}\big) = \varphi(\bar{q})<0\; ,
    $$
    which is a contradiction.

    \bigskip

    \noindent\textbf{Second case:} there exists a subsequence of
        $(x_{\eps},t_\eps)$ denoted by $(x_n,t_n)$ such that $x_n\notin\Man{k}$.
    
    \noindent\textsc{Step 1 ---}  Notice first that necessarily we have
    $\tau_n(k)\to0$.  Thus, between times $t=0$ and $t=\tau_n(k)$, $X_{x_n,t_n}(s)$
    remains inside a ball $B\subset\R^N$ such that 
    $B\cap\Man{j}=\emptyset$ for any $j\leq k$. By  
    Lemma~\ref{lem:super.dpp} we can use the super dynamic programmation principle for
    $v(x_n,t_n)$ between times $0$ and
    $\tau_n\wedge\tau_B$, where we write $\tau_n$ for $\tau_n(k)$. 
    Taking $n$ large enough so that $\tau_n<\tau_B$, we get
    \begin{equation}\label{eq:rep.v}
        \frac{v(x_n,t_n)-v\big(X_{x_n,t_n}(\tau_n),t-\tau_n\big)}{\tau_n}\geq\frac{1}{\tau_n}
        \int_0^{\tau_n} l(X_{x_n,t_n}(s),t_n-s)\ds\,.
    \end{equation}
    Since $(X_{x_n,t_n},L)$ satisfies the differential inclusion, we have
    $$X_{x_n,t_n}(\tau_n)={x_n}+\int_0^{\tau_n}
    b(X_{x_n,t_n}(s),t_n-s)\ds$$ 
    for some function $b$ such that for any $s\in(0,\tau_n)$,
    $b(X_{x_n,t_n}(s),t_n-s)\in\B(X_{x_n,t_n}(s),t_n-s)$.
    Hence, taking the test-function $\phi_\eps$ we have, on one hand
    $$
    v(x_n,t_n)-v\big(X_{x_n,t_n}(\tau_n),t-\tau_n\big) \leq \phi_\eps(x_n,t_n)-
    \phi_\eps\big(X_{x_n,t_n}(\tau_n),t_n-\tau_n\big)\; ,
    $$
    and, on the other hand
    $$
    {\rm dist}(x_n;\Man{k})^2 - {\rm dist}\big(X_{x_n,t_n}(\tau_n);\Man{k})^2 = {\rm
        dist}(x_n;\Man{k})^2 \geq 0\; .
    $$
    Therefore 
    \begin{equation}\label{eq:rep.phi}
        \phi_\eps\big(X_{x_n,t_n}(\tau_n),t_n-\tau_n\big)-\phi_\eps(x_n,t_n) \geq
        \partial_t\phi (x_n,t_n)\tau_n + (D\phi(x_n,t_n) +\bar{q})\cdot
        \int_0^{\tau_n} b(X_{x_n,t_n}(s),t_n-s)\ds+o(\tau_n)\,.
    \end{equation}
    Combining \eqref{eq:rep.v} with the above properties, we get
    \begin{align}
        \partial\phi(x_n,t_n)\geq \label{eq:phi}
        \big(D\phi(x_n,t_n)+\bar q\,\big)+ & \frac{1}{\tau_n}\int_0^{\tau_n} 
        b(X_{x_n,t_n}(s),t_n-s)\ds \\ \nonumber &+ \frac{1}{\tau_n}\int_0^{\tau_n}
        l(X_{x_n,t_n}(s),t_n-s)\ds+o(1)\,.
    \end{align}

    \noindent\textsc{Step 2 ---}
    By the properties \hyp{\BL}, we claim that there exists a
    couple $(b,l)\in\BL(x,t)$ such that, at least along a subsequence
    $$
    \frac{1}{\tau_n}\int_0^{\tau_n} b(X_{x_n,t_n}(s),t_n-s)\ds\to b\,,\ 
    \frac{1}{\tau_n}\int_0^{\tau_n} l(X_{x_n,t_n}(s),t_n-s\big)\ds\to l\,.
    $$
    Indeed, notice first that as $\tau_n\to0$, $X_{x_n,t_n}(\cdot)\to x$
    and $(t_n-\cdot)\to t$, both convergences being uniform on $[0,\tau_n]$.
    
    Then, there exists a sequence $\eps_n\to0$ and
    $$(b_n,l_n) \in(\BL)_{\eps_n}(x,t):=\overline{\mathrm{co}}\,\Big(
    \bigcup_{\substack{|z-x|\leq\eps_n \\ |s-t|\leq \eps_n}}\BL(z,s)\Big)
    $$
    such that
    $$
    \frac{1}{\tau_n}\int_0^{\tau_n}b(X_{x_n,t_n}(s),t_n-s)\ds=b_n\quad , \quad
    \frac{1}{\tau_n}\int_0^{\tau_n} l(X_{x_n,t_n}(s),t_n-s\big)=l_n
    \,.
    $$
    By the bounds for $b_n,l_n$, we deduce that at least along a
    subsequence still denoted by $b_n,l_n$, we have $(b_n,l_n)\to (b,l)$ for
    some $(b,l)\in\R^N\times \R$. Now, since the images of the $\BL(z,s)$ are
    convex and since $\BL$ is upper semi-continuous,
    $\distH((\BL)_{\eps_n}(x,t),\BL(x,t))\to0$ as $\eps_n \to 0$ and we deduce
    that $(b,l)\in \BL(x,t)$.
        
    \noindent\textsc{Step 3 ---} Passing to the limit in \eqref{eq:phi} as
    $\tau_n\to0$ yields
    $$\partial_t\phi(x,t)\geq \big(D\phi(x,t)+\bar q\,\big)\cdot b + l\,.$$
    But this is in contradiction with the assumption that $\varphi(\bar{q})<0$.
    Hence, either \textbf{A)} holds or this second case cannot happen
    and then \textbf{B)} holds. This ends the proof.
\end{proof}

\subsection{Proof by induction on the dimension of $\Man{k}$}

As we already noticed above, \Q{N} necessarily holds true since in this case the
ball does not intersect any discontinuity. Moreover, we proved in \cite{BBC2}
that \Q{N-1} is also true. Of course, \Q{0} means that we have a comparison
result without any restriction on the submanifolds $\Man{k}$ which intersects
$B(x,r)$.  Thus, the proof of Theorem~\ref{thm:main} is reduced to the following
backwards induction property

\begin{proposition}\label{prop:induction.tilde}
    Assume that \Q{k} is true for some $k\in\{1,...,N-1\}$. Then \Q{k-1} is
    also true.
\end{proposition}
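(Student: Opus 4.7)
The plan is to argue by contradiction, following the blueprint laid out on page~\pageref{page:recurrence}. First, Lemma~\ref{FLTG} lets me reduce \Q{k-1} to a local comparison in small balls; composing with the $C^{1,1}$-charts of Definition~\ref{def:RS}, I may further assume that the stratification is an \AFS and that the Hamiltonians satisfy \LAHF. Fix such a ball $B\subset \Man{k-1}\cup\Man{k}\cup\cdots\cup\Man{N}$, a time interval $(t_1,t_2]$, a strict subsolution $u$ of \HJBSD with margin $\eta>0$ and a supersolution $v$, and suppose for contradiction that $u-v$ has a local maximum point at some $(x_0,t_0)\in B\times(t_1,t_2]$. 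If $x_0\notin \Man{k-1}$ then a smaller ball around $x_0$ lies entirely in $\Man{k}\cup\cdots\cup\Man{N}$, and the inductive hypothesis \Q{k} directly produces the contradiction, so I may assume $x_0\in\Man{k-1}$.

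Since we have no regularity on $u$, the next step is to regularize it in the tangent directions of $\Man{k-1}$ via Lemma~\ref{RegTan}. This yields subsolutions $u^\eps$ of \HJBSD in a slightly smaller parabolic cylinder whose restriction to $\Man{k-1}$ is $C^1$, with $\limssup u^\eps=u$. Because the shifts $c(\eps_1,\alpha_1)$ and $d(\eps_2)$ produced by the sup-convolution and mollification vanish with $\eps$, the functions $u^\eps$ remain strict subsolutions with margin $\eta/2$ for $\eps$ small. After a small quadratic perturbation to make $(x_0,t_0)$ a strict local maximum of $u-v$, the standard half-relaxed limit argument produces local maximum points $(x_\eps,t_\eps)\to(x_0,t_0)$ of $u^\eps-v$; and, reusing \Q{k} whenever $(x_\eps,t_\eps)\notin\Man{k-1}$, one may further suppose $(x_\eps,t_\eps)\in\Man{k-1}$ for all small $\eps$.

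Because $u^\eps$ is $C^1$ on $\Man{k-1}$, I then apply the supersolution alternative of Lemma~\ref{prop:super.alternative} to $v$ at $(x_\eps,t_\eps)$ with test function $u^\eps$. In alternative \textbf{B)}, the viscosity inequality $u^\eps_t(x_\eps,t_\eps)+H^{k-1}(x_\eps,t_\eps,Du^\eps(x_\eps,t_\eps))\geq 0$ directly contradicts the strict subsolution inequality, which forces the same quantity to be $\leq -\eta/2$, and we are done. Alternative \textbf{A)} supplies $\bar\tau>0$, a sequence $(x_n,t_n)\to(x_\eps,t_\eps)$ and trajectories $(X_n,L_n)$ with $\tau_n(j)\geq\bar\tau$ for every $j\leq k-1$, so that each $X_n\big([0,\bar\tau]\big)$ remains in $\Man{k}\cup\cdots\cup\Man{N}$, together with the super-optimality bound
$$v(x_n,t_n)\geq \int_0^{\bar\tau} l_n\big(X_n(s),t_n-s\big)\ds + v\big(X_n(\bar\tau),t_n-\bar\tau\big).$$

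The crux of the argument is to match this with a sub-dynamic programming bound for $u^\eps$ along the \emph{same} trajectory. By compactness and the \AFS structure, each $X_n([0,\bar\tau])$ sits in a relatively compact open set $\Omega_n$ with $\overline{\Omega_n}\cap\Man{j}=\emptyset$ for every $j<k$, so the inductive hypothesis \Q{k} makes Lemma~\ref{lem:sub.Qk.dpp} applicable in $\Omega_n$. Specializing the resulting sub-DPP to the specific pair $(X_n,L_n)$ and using that $u^\eps+(\eta/2)s$ is an honest subsolution, I would obtain
$$u^\eps(x_n,t_n)+\frac{\eta}{2}\bar\tau\leq \int_0^{\bar\tau} l_n\big(X_n(s),t_n-s\big)\ds + u^\eps\big(X_n(\bar\tau),t_n-\bar\tau\big).$$
Subtracting this from the bound on $v$ and using the local max property $u^\eps-v\leq M^\eps:=u^\eps(x_\eps,t_\eps)-v(x_\eps,t_\eps)$ near $(x_\eps,t_\eps)$, then passing to the limit $n\to\infty$ --- where the strict-minimum structure of $v-u^\eps$ at $(x_\eps,t_\eps)$ guarantees $v(x_n,t_n)\to v(x_\eps,t_\eps)$ --- yields $M^\eps+(\eta/2)\bar\tau\leq M^\eps$, the desired contradiction. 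The main obstacle is precisely this last step: reconciling the trajectory supplied by the supersolution's alternative with the sub-DPP for $u^\eps$, and ensuring enough uniform separation between the trajectory tubes $\Omega_n$ and the manifolds $\Man{j}$ with $j<k$ to legitimately invoke Lemma~\ref{lem:sub.Qk.dpp}.
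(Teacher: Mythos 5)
Your proposal is correct and follows essentially the same route as the paper's proof: localize the putative maximum onto $\Man{k-1}$ using \Q{k}, regularize the subsolution tangentially via Lemma~\ref{RegTan}, and then dispatch the two branches of Lemma~\ref{prop:super.alternative} --- case \textbf{B)} against the strict subsolution inequality for the $C^1$ regularization, and case \textbf{A)} by pairing the supersolution's super-DPP with the sub-DPP of Lemma~\ref{lem:sub.Qk.dpp} along the same trajectory. The delicate points you flag at the end (using the strict-minimum structure to get $v(x_n,t_n)\to v(x_\eps,t_\eps)$, and keeping the trajectories away from $\Man{j}$, $j<k$, on a uniform time interval so the sub-DPP applies) are exactly the ones the paper's proof relies on as well.
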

\begin{proof}  We consider a ball $B\subset \Man{k-1}\cup \Man{k} \cup\cdots\cup \Man{N}$, $0\leq t_1 <t_2 \leq T$, an
    usc function $u$ which is a strict subsolution  of \HJBSD in $B\times
    (t_1,t_2]$ and a lsc supersolution $v$ of \HJBSD in $B\times (t_1,t_2]$.

In order to check \Q{k-1} we have to show that $u-v$ cannot have a maximum point $(\bar x, \bar t)$ in $B\times (t_1,t_2]$.
But by \Q{k}, $\bar x$ cannot belong to
    any $\Man{j}$ for $j\geq k$. Therefore, we are left with the case where
    $\bar x\in\Man{k-1}$.
Using \RS and \AFS-$(i)$,  we consider a smaller ball $B'$ such that $\bar
B'\subset B$ 
still containing $\bar x$ and such that
$B'\cap\Man{j}=\emptyset$ for any $j<k-1$. 

Using that the Hamiltonians $H$ and $H^{j}$ are Lipschitz continuous in $p$,
we can replace $u$ by $\bar u(x,t):=u(x,t)-\delta ((t-\bar t)^2+|x-\bar
x|^2)$ for $\delta >0$ small enough: this new function is still a strict
subsolution and $(\bar x, \bar t)$ is a strict local maximum point of $\bar u
-v$.

Next we use Lemma~\ref{RegTan} for the subsolution $\bar u$ and for $r,h>0$ small
enough: since there exists a sequence $(u^\e)_\e$ of subsolutions such that
$\limssup u^\e = \bar u$, there exists an usc subsolution $u_\flat$ defined in
$B(\bar x,r)\times (\bar t-h,\bar t)\subset B'\times(t_1,t_2)$ and a maximum
point $(x_\flat,t_\flat)$ of $u_\flat-v$ which is also as close as we want to
$(\bar x,\bar t)$.

We can therefore assume that $x_\flat\in B'$ and since \Q{k} holds true, 
necessarily $x_\flat\in\Man{k-1}$ for the same reason as for $\bar x$ above.

Consider now Lemma~\ref{prop:super.alternative} for $v$ at $x_\flat$. If we are in
case \textbf{A)} of the alternative, we get a sequence $x_n\to x_\flat$ which
remains in $\Omega:=B'\setminus \Man{k-1}$, and $\bar\Omega$  does not intersect any
    $\Man{j}$ for $j\leq k-1$. Moreover, the reaching times of trajectories issued
from the $x_n$ are controled from below.

Next, we use in conjunction Lemma~\ref{lem:sub.Qk.dpp} in $\Omega$: the 
sub-optimality principle satisfied by $u_\flat$ in $\Omega$ implies that
for some $\sigma\in(0,h)$ small enough (but uniform with respect to $n$)
$$ u_\flat(x_n,t_n)- v(x_n,t_n) \leq
    u_\flat (X_{x_n,t_n}(\sigma),t_n-\sigma)-
    v(X_{x_n,t_n}(\sigma),t_n-\sigma)-\eta\sigma\,,$$
where $\eta$ comes from the strict subsolution property for $u_\flat$.
    Passing to the limit as $x_n\to x_\flat$ we obtain
    $$ u_\flat (x_\flat,t_\flat) - v(x_\flat,t_\flat) \leq
    u_\flat (X_{x_\flat,t_\flat}(\sigma),\bar t-\sigma)-
    v(X_{x_\flat,t_\flat}(\sigma),\bar t-\sigma)-\eta\sigma \,.$$
and this contradicts the fact that $(x_\flat,t_\flat)$ is a local maximum
point of $u_\flat -v$.

In case \textbf{B)}, since by  Lemma~\ref{RegTan} $u_\flat$ is $C^1$ on $\Man{k}$, we have 
$$ u_\flat(x_\flat,t_\flat)_ t + H^k(x,t,Du_\flat(x_\flat,t_\flat)) \geq 0 \; .$$
But this is also a contradiction since $u_\flat$ is a strict subsolution and therefore
$$u_\flat(x_\flat,t_\flat)_ t + H^k(x^\e,t^\e,Du_\flat(x_\flat,t_\flat)) \leq -\eta <0
\,.$$
Hence, such a maximum point $(x_\flat,t_\flat)$ cannot exist, which implies that
if $\bar x$ exists, it has to be located on $\Man{j}$ for some $j<k-1$, thus and \Q{k-1}
holds true.
\end{proof}

\section{A Stability Result}\label{NESR}

In  this section we prove a stability result when we have a sequence of problems
on stratified domains $\HJBSD_\eps$.  An important issue here is that, not only
do the corresponding Hamiltonians depend on $\eps$, but also the stratification
of space does. More precisely, for each $\eps >0$ we are given a regular
stratification $\M_\eps$ and a notion of convergence is required.

This is the purpose of the following definition.
\begin{definition}\label{def:conv.strat} 
    We say that a sequence $(\M_\eps)_\eps$ of regular stratification of $\R^N$.
    converges to a regular stratification $\M$ if, for each $x\in\R^N$, there exists
    $r>0$, an \AFS $\M^\star=\M^\star(x,r)$ in $\R^N$ and, for any $\eps >0$, changes of
    coordinates $\Psi^x_\eps, \Psi^x $ as in Definition~\ref{def:RS} such that
    $\Psi^x_\eps(x)= \Psi^x(x)$ and \\[2mm]
    $(i)$ $\Psi^x_\eps(\Man{k}_\eps\cap B(x,r))= \M^\star \cap
    \Psi^x_\eps(B(x,r))$, $\Psi^x(\Man{k}\cap B(x,r))=
    \M^\star\cap\Psi^x(B(x,r))$.\\ $(ii)$ the changes of coordinates
    $\Psi^x_\eps$ converge in $C^{1}(B(x,r))$ to $\Psi^x$ and their inverses
    $(\Psi^x_\eps)^{-1}$ defined on $\Psi^x(B(x,r))$ also converge in $C^1$ to
    $(\Psi^x)^{-1}$.\\[2mm]
    We denote this convergence by $\M_\eps \toRS \M$.    
\end{definition}

Thus, the manifolds $\Man{k}_\eps$ ($k=0..N$) can vary with $\eps$ but after suitable
changes of variable $\Psi^x_\eps$, they are flat and constant. The important issue
is that we do not want to create/destroy/intersect manifolds when they move.

Then we also consider, for each $\eps >0$, the associated
Hamilton-Jacobi-Bellman problem in the stratified domain $\M_\eps$, that we
denote by $\HJBSD_\eps$. The meaning of sub and supersolutions is the one that
is introduced in Definition~\ref{def:HJBSD}, with the family of Hamiltonians
$H_\eps$ and $(H^k_\eps)$ that are constructed from $\M_\eps$ and some family
$\BL_\eps$. 

In order to formulate the following stability result, we have to define limiting
Hamiltonians for the $H^k_\eps(x,t,p)$ but the difficulty is that they are
defined for $x\in \Man{k}_\eps$ which depends on $\eps$. In order to turn around
this difficulty, we use the change of variables of
Definition~\ref{def:conv.strat} which leads to consider the Hamiltonians
$\tilde{H}^k_\eps$, defined for $x\in \M^\star\cap\Psi^x(B(x,r))$, a
domain which does not depend on $\eps$. We make a slight abuse of notations by
saying that $H^k=\liminf_*H^k_\eps$ if the associated rectified
Hamiltonians satisfy $\tilde{H}^k=\liminf_* \tilde{H}^k_\eps$.

\begin{theorem}\label{thm:main.stability}
Assume that $(\M_\eps)_\eps$ is a sequence of \RS in $\R^N$ such that $\M_\eps
\toRS \M$, then the following holds 
  \begin{itemize} 
       \item[$(i)$] if, for all $\eps >0$, $v_\eps$ is a lsc supersolution of
            $\HJBSD_\eps$, then $ \underline{v}=\liminf_* v_\eps $ is a  lsc
            supersolution of  \HJBSD, the HJB problem associated with
            $H=\limssup \,H_\eps$.
      \item[$(ii)$] If, for $\eps >0$, $u_\eps$ is an usc subsolution of
          $\HJBSD_\eps$ and if the Hamiltonians $(H^k_\eps)_{k=0..N}$ satisfy
           \NC and \TC with uniform constants, 
          then $\bar{u} = \limsup^*u_\eps$ is a subsolution of
           \HJBSD with $H^k=\limiinf H^k_\eps$ for any $k=0..N$.
   \end{itemize}
\end{theorem}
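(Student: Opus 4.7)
The plan is to treat (i) and (ii) separately. Part (i) reduces to the classical Barles--Perthame half-relaxed limits argument, since the supersolution condition in \HJBSD (Definition~\ref{def:HJBSD}$(ii)$) is simply the standard viscosity supersolution condition for $v_t+H(x,t,Dv)=0$ on $\R^N\times(0,T]$, with no stratum-specific requirement. At a strict local minimum $(x,t)$ of $\underline v-\phi$ one extracts minimum points $(x_\eps,t_\eps)\to(x,t)$ of $v_\eps-\phi$, and the inequality $\phi_t(x_\eps,t_\eps)+H_\eps(x_\eps,t_\eps,D\phi(x_\eps,t_\eps))\geq 0$ passes to the limit via $H=\limssup H_\eps$. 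The varying stratifications play no role here.

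For (ii), the first step is a flattening reduction. Around each $x_0\in\R^N$, the $C^1$-convergence of the charts $\Psi^{x_0}_\eps$ and of their inverses given by Definition~\ref{def:conv.strat} allows to set $\tilde u_\eps := u_\eps\circ(\Psi^{x_0}_\eps)^{-1}$ and to view the $\tilde u_\eps$ as subsolutions of \HJBSD associated with the \emph{fixed} \AFS $\M^\star$, with transformed Hamiltonians $\tilde H^k_\eps$ still satisfying \NC and \TC with uniform constants (up to an $o(1)$ perturbation due to chart convergence). Since $\limsup^*\tilde u_\eps$ corresponds to $\bar u$ via the limit chart, I may assume from now on that the stratification is flat and fixed.

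Let $(x,t)\in\Man{k}\times(0,T)$ be a strict local maximum of $\bar u-\phi$ on $\Man{k}\times(0,T]$, with $\phi$ extended smoothly to $\R^N\times(0,T)$, and suppose for contradiction that $\phi_t(x,t)+H^k(x,t,D\phi(x,t))\geq\eta>0$. Using $H^k=\liminf_* H^k_\eps$, this yields $\phi_t(y,s)+\tilde H^k_\eps(y,s,p)\geq\eta/2$ for $\eps$ small and $(y,s,p)$ close to $(x,t,D\phi(x,t))$ with $y\in\Man{k}$. To push approximate maxima of $\tilde u_\eps$ toward $\Man{k}$, I introduce
\[
\Phi_\eps(y,s):=\phi(y,s)+\frac{|y_\bot|^2}{\alpha^2},
\]
with $y_\bot$ the orthogonal projection of $y-x$ on $V_k^\bot$, and let $(y_\eps,s_\eps)$ be a maximum of $\tilde u_\eps-\Phi_\eps$ in a small neighborhood of $(x,t)$. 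Standard penalization arguments give $(y_\eps,s_\eps)\to(x,t)$ and $\tilde u_\eps(y_\eps,s_\eps)\to\bar u(x,t)$ as $\eps\to0$ and then $\alpha\to0$. If $y_\eps\in\Man{j}$ with $j>k$, writing the subsolution inequality on $\Man{j}$ at $(y_\eps,s_\eps)$ with test-function $\Phi_\eps$ and using the uniform coercivity $\tilde H^j_\eps(y,s,p)\geq\delta|p_\bot|-C(1+|p_\top|)$ from \NC, one obtains $|y_{\eps,\bot}|=O(\alpha^2)$, so $y_\eps$ is much closer to $\Man{k}$ than $\alpha$. Arguing then as in the proof of Theorem~\ref{SubP}$(i)$ but uniformly in $\eps$ (which is permitted because \NCBL holds uniformly), the controller drives a trajectory normally from $y_\eps$ to a point $\bar y_\eps\in\Man{k}$ in time $\tau_\eps=O(\alpha^2)$, with a corresponding $O(\alpha^2)$ variation of $\tilde u_\eps$. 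The shifted sequence $(\bar y_\eps,s_\eps-\tau_\eps)\in\Man{k}\times(0,T)$ still converges to $(x,t)$ with $\tilde u_\eps\to\bar u(x,t)$, and after a small Ekeland-type adjustment of $\phi$ on $\Man{k}$ the subsolution inequality on $\Man{k}$ yields $\phi_t+\tilde H^k_\eps\leq 0$ at these adjusted maxima; passing to $\liminf_{\eps\to 0}$ with $H^k=\liminf_* H^k_\eps$ contradicts the standing assumption.

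The main obstacle is the \emph{transfer to $\Man{k}$}: replacing a maximum point in $\Man{j}$ for $j>k$ by a nearby point on $\Man{k}$ without losing the convergence $\tilde u_\eps\to\bar u(x,t)$. This relies crucially on the uniform \NC for $\tilde\BL_\eps$ and on a careful coupling of $\alpha$ and $\eps$ (typically $\alpha=\alpha(\eps)\to 0$ slowly enough that the bound $|y_{\eps,\bot}|=O(\alpha^2)$ translates into a genuine convergence on $\Man{k}$). Once this localization on $\Man{k}$ is secured, the rest is a routine viscosity stability argument for the restricted problem on the stratum.
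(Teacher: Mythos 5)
Part $(i)$ is fine and matches the paper (it is the standard half-relaxed limits argument, since the supersolution condition involves only $H$), and your flattening reduction at the start of $(ii)$ is also exactly what the paper does. The divergence, and the gap, is in how you force the approximate maxima onto $\Man{k}$. Your quadratic penalization $|y_\bot|^2/\alpha^2$ combined with the uniform coercivity from \NC only yields $|y_{\eps,\bot}|=O(\alpha^2)$; it does not exclude maximum points on $\Man{j}$, $j>k$, and the normal part of the test-function gradient, $2y_{\eps,\bot}/\alpha^2$, stays of order one, so nothing can be extracted from the $\Man{j}$-inequality in the limit. Everything therefore hinges on your ``transfer to $\Man{k}$'' step, and that step is not justified: you invoke the proof of Theorem~\ref{SubP}$(i)$ ``uniformly in $\eps$'', but that proof is about the \emph{value function} and rests on the Dynamic Programming Principle (Theorem~\ref{DPP}); an arbitrary usc subsolution $u_\eps$ of $\HJBSD_\eps$ has no DPP. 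The only substitute available in the paper is the sub-optimality principle of Lemma~\ref{lem:sub.Qk.dpp}, which requires the comparison property \Q{k} for each $\eps$-problem (and \NCBL-type assumptions on $\BL_\eps$ that are not among the hypotheses of the theorem, which are stated on the Hamiltonians $H^k_\eps$), and even granting it, the point $\bar y_\eps\in\Man{k}$ produced by following a normal dynamic is not a maximum point of $u_\eps-\phi$ restricted to $\Man{k}$, so you cannot write the viscosity inequality there; the ``Ekeland-type adjustment'' that is supposed to repair this, with control on how the adjusted gradients converge to $D\phi(x,t)$, is precisely the missing argument, together with the unspecified coupling $\alpha=\alpha(\eps)$.

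The paper avoids all of this with a simpler device: test $u_\eps$ against $\phi(y,s)+L\,\dist(y,\Man{k})$ with $L$ a large \emph{fixed} constant. Since $D\big[\dist(\cdot,\Man{k})\big]$ is a unit vector of $V_k^\bot$ away from $\Man{k}$, the uniform \NC makes the subsolution inequality on any $\Man{l}$, $l>k$, impossible once $L>\delta^{-1}\big(C_2(1+|D\phi_\top|)+|\phi_t|+\delta|D\phi_\bot|\big)$ locally; hence the maximum point $(x_\eps,t_\eps)$ lies on $\Man{k}$ outright, where the distance term vanishes identically, so $(x_\eps,t_\eps)$ is a genuine local maximum of $u_\eps-\phi$ on $\Man{k}$ and the inequality $\phi_t+H^k_\eps(x_\eps,t_\eps,D\phi(x_\eps,t_\eps))\leq 0$ passes to the limit using $H^k=\limiinf H^k_\eps$ and the strict maximum. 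If you replace your quadratic penalization and trajectory transfer by this first-order penalization, your argument closes without any appeal to DPP, comparison, or Ekeland-type perturbations.
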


\begin{proof} Result $(i)$ is standard since only the $H_\eps / H$-inequalities
    are involved and therefore $(i)$ is nothing but the standard stability result
    for discontinuous viscosity solutions with discontinuous Hamiltonians, see
    \cite{Idef}.

For $(ii)$, because of the definition of the convergence of the \RS, we can
assume without loss of generality that the \RS is fixed and is in fact an \AFS.
Then if $(x_0,t_0) \in \Man{k} \times (0,T)$ is a {\em strict} local maximum
point of $\bar u -\phi$ on $\Man{k}$, where $\phi$ is a $C^1$ function in
$\R^N$, we consider the functions $$u_\eps (x,t) -\phi (x,t) -
L\cdot\dist(x,\Man{k})$$ where $\dist(\cdot,\Man{k})$ denotes the distance to
$\Man{k}$.

For $\eps$ small enough, this function has a maximum point $(x_\eps,t_\eps)$ near
$(x_0,t_0)$. If $x_\eps \in \Man{l}$ for $l>k$, we have (because $u_\eps$ is an
usc subsolution of $\HJBSD_\eps$) $$ \phi_t (x_\eps,t_\eps) + H^l_\eps
\Big(x_\eps,t_\eps, D\phi(x_\eps,t_\eps) +  L\cdot
D\big[\dist(x_\eps,\Man{k})\big]\Big)\leq 0\;
.$$

Next we remark that, on the one hand, $D\big[\dist(x_\eps,\Man{k})\big] \in
V_k^\bot$ (recall that we are in the \AFS case) and on the other hand
$\big|D\big[\dist(x_\eps,\Man{k})\big]\big|=1$; therefore we can use \NC and
choose $L$ large enough in order that this inequality cannot hold. Notice that
this choice does not depend neither on $\eps$ nor on $l$, but we use that the
distance to $\Man{k}$ is smooth if we are not on $\Man{k}$.

Therefore $x_\eps \in \Man{k}$ for $l>k$, and $(x_\eps,t_\eps)$ is a local
maximum point of $u_\eps (x,t) -\phi (x,t)$ on
$\Man{k}$ (we can drop the distance since we look at the function only on
$\Man{k}$). Hence $$ \phi_t (x_\eps,t_\eps) + H^k_\eps \Big(x_\eps,t_\eps,
D\phi(x_\eps,t_\eps) \Big)\leq 0\; .$$

But using that $\bar{u} = \limsup^*u_\eps$ and that $(x_0,t_0)$ is a strict
local maximum point of $\bar u -\phi$ on $\Man{k}$, classical arguments imply
that $(x_\eps,t_\eps) \to (x_0,t_0)$ and the conclusion of the proof follows as
in the standard case.  \end{proof}

We conclude this section with some sufficient conditions on $\BL$ for the
stability of solutions.

\begin{lemma}  \label{lem:stab.bl.ham}
  For any $\eps >0$, let $\BL_\eps$  satisfy \hyp{\BL}, \TCBL and \NCBL
  with constants independent of $\eps$ and assume that $\M_\star$ is a fixed
  \AFS adapted to every $\BL_\eps$. Assume that $\BL_\eps\to\BL$ in the sense of
  the Haussdorf distance. Then for every $k\in\{0,...,N\}$, 
  $H^k_\eps\to H^k$ locally uniformly in $\Man{k}_\star\times(0,T)\times\R^N$.
\end{lemma}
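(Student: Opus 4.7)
The plan is to first establish pointwise convergence $H^k_\eps(x,t,p) \to H^k(x,t,p)$ on $\Man{k}_\star \times (0,T) \times \R^N$, and then upgrade it to local uniform convergence via Arzelà-Ascoli, using equi-continuity inherited from the fact that \TCBL, \NCBL, and \LP hold with uniform constants.

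For the upper bound $\limsup_\eps H^k_\eps(x,t,p) \leq H^k(x,t,p)$, I would pick, for each $\eps$, an $\eta$-maximizer $(b_\eps, l_\eps) \in \BL_\eps(x,t)$ with $b_\eps \in T_x\Man{k}_\star$ realizing (up to $\eta$) the supremum defining $H^k_\eps(x,t,p)$. By the uniform bounds in \hyp{\BL}, along a subsequence $(b_\eps, l_\eps) \to (b,l)$. The pointwise Hausdorff convergence $\BL_\eps(x,t) \to \BL(x,t)$, together with the closedness of the linear space $T_x\Man{k}_\star$, forces $(b,l)\in \BL(x,t)\cap(T_x\Man{k}_\star\times\R)$; passing to the limit in $-b_\eps\cdot p - l_\eps$ then yields the bound.

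The lower bound $\liminf_\eps H^k_\eps(x,t,p) \geq H^k(x,t,p)$ is the main point. Given $(b,l)\in \BL(x,t)$ with $b\in T_x\Man{k}_\star$ and $-b\cdot p - l \geq H^k(x,t,p) - \eta$, Hausdorff convergence provides $(\tilde b_\eps, \tilde l_\eps) \in \BL_\eps(x,t)$ with $(\tilde b_\eps, \tilde l_\eps)\to(b,l)$. Decomposing $\tilde b_\eps = t_\eps + n_\eps$ along $V_k \oplus V_k^\perp$, we have $t_\eps \to b$ and $n_\eps \to 0$. To enforce the tangential constraint I would combine \NCBL with a convex-combination trick: first, upper semi-continuity of $\BL_\eps$ together with \NCBL at nearby points off $\Man{k}_\star$ yields $B(0,\delta)\cap V_k^\perp \subset \B_\eps(x,t)$, so there exists $l^0_\eps\in[-M,M]$ with $\bigl(-\delta n_\eps/|n_\eps|, l^0_\eps\bigr) \in \BL_\eps(x,t)$. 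Setting $\alpha_\eps := \delta/(\delta + |n_\eps|) \to 1$, convexity of $\BL_\eps(x,t)$ gives
$$ (b_\eps, l_\eps) := \alpha_\eps(\tilde b_\eps,\tilde l_\eps) + (1-\alpha_\eps)\bigl(-\delta n_\eps/|n_\eps|, l^0_\eps\bigr) \in \BL_\eps(x,t), $$
and a direct computation shows that the $V_k^\perp$-component of $b_\eps$ vanishes, while $(b_\eps, l_\eps) \to (b,l)$. Consequently $H^k_\eps(x,t,p) \geq -b_\eps\cdot p - l_\eps \to -b\cdot p - l$; taking the supremum over admissible $(b,l)$ and letting $\eta \to 0$ gives the desired inequality.

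To pass from pointwise to local uniform convergence, I would note that the uniformity of the constants in \TCBL, \NCBL, and \LP makes $(H^k_\eps)_\eps$ uniformly bounded and equi-continuous in $(x,t,p)$ on compact subsets of $\Man{k}_\star \times (0,T) \times \R^N$; Arzelà-Ascoli combined with the already established pointwise convergence then delivers the conclusion. The main obstacle, as one might expect, is the lower bound: Hausdorff convergence of $\BL_\eps$ does not in general imply Hausdorff convergence of the tangential slices $\BL_\eps(x,t) \cap (T_x\Man{k}_\star\times\R)$, and it is precisely the normal controllability \NCBL, exploited via the above convex combination, that repairs this defect.
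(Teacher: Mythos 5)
Your proof is correct, and it is in fact more complete than the paper's own argument, which is two lines long: the paper simply asserts that Hausdorff convergence of $\BL_\eps$ implies locally uniform convergence of the restrictions $(\BL_\eps)|_k$ to $\BL|_k$ and then passes the convergence through the supremum defining $H^k$. You identify precisely the point this glosses over---convergence of the full sets does not by itself give convergence of the tangential slices $\BL_\eps(x,t)\cap(T_x\Man{k}_\star\times\R)$---and you repair it with the right tool: \NCBL (extended to points of $\Man{k}_\star$ via upper semi-continuity and compactness of the images) plus convexity of $\BL_\eps(x,t)$, through the convex combination with weight $\alpha_\eps=\delta/(\delta+|n_\eps|)$ that exactly cancels the normal component; this is the same controllability-plus-convexity mechanism the paper uses elsewhere (e.g.\ in Theorem~\ref{SubP}) but never spells out here. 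Your route also differs in how uniformity is obtained: the paper claims locally uniform convergence of the slices directly, whereas you prove pointwise convergence of the $H^k_\eps$ and recover local uniformity from equicontinuity (uniform \TCBL constants giving \TC uniformly in $\eps$, and the uniform bound $M$ giving \LP) together with Arzel\`a--Ascoli; this buys you a proof that works even if the Hausdorff convergence is only assumed pointwise in $(x,t)$. Two cosmetic points: the corrector $-\delta n_\eps/|n_\eps|$ has norm exactly $\delta$, so either use $\delta'<\delta$ or note that the closed normal ball is contained in the compact set $\B_\eps(x,t)\cap V_k^\bot$ by a limiting argument; and for $k=N$ there is no tangency constraint, so the correction step is simply vacuous there.
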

\begin{proof}
    Since we are in a flat (and static) situation, let us first notice that the
  Hamiltonians $H^k_\eps$ are all defined on the same set. Then the convergence
  of $\BL_\eps$ implies that $(\BL_\eps)|_k$ (the restriction to
  $\Man{k}_\star\times[0,T]$) converges locally uniformly to $\BL|_k$. 
  It follows directly that
  $$H^k(x,u,p):=\sup_{\substack{(b,l)\in\BL_\eps(x,t) \\ b\in
          T_x\Man{k}_\star}}
      \big\{-b\cdot p-l \big\}\longrightarrow
      \sup_{\substack{(b,l)\in\BL(x,t) \\ b\in T_x\Man{k}_\star}}
      \big\{-b\cdot p-l \big\}
  = H^k(x,u,p)\,.$$
\end{proof}

\begin{corollary}\label{cor:stability}
  For any $\eps >0$, let $\BL_\eps$  satisfy \hyp{\BL} with constants independent
  of $\eps$, and consider an associated regular stratification
  $(\M_\eps,\Psi_\eps)$. We assume that $\BL_\eps\to\BL$ in the sense of
  Haussdorf distance and that $\M_\eps\toRS\M$. Let $U_\eps$ be 
  the unique solution of $\HJBSD_\eps$. Then
  $$U_\eps\to U\quad\text{locally uniformly in }\R^N\times[0,\infty)\,,$$
  where $U$ is the unique solution of the limit problem \HJBSD. 
\end{corollary}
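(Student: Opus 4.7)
The plan is to apply the classical half-relaxed limits method: use the stability Theorem~\ref{thm:main.stability} to produce a sub- and a supersolution of the limit problem \HJBSD, then invoke the comparison result Theorem~\ref{thm:main} to identify them with the unique solution $U$ of \HJBSD.

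First I would note that since each $\BL_\eps$ satisfies \hyp{\BL} with uniform constants and $g$ is bounded, the family $(U_\eps)$ is uniformly bounded on $\R^N\times[0,T]$. The half-relaxed limits
$$\bar U:=\limssup U_\eps,\qquad \underline U:=\liminf_* U_\eps$$
are therefore well defined bounded functions satisfying $\underline U\leq\bar U$ pointwise. Moreover, by Lemma~\ref{lem:stab.bl.ham} applied in the local charts provided by Definition~\ref{def:conv.strat}, the rectified tangential Hamiltonians $\tilde H^k_\eps$ converge locally uniformly to $\tilde H^k$, so in particular $H=\limssup H_\eps$ and $H^k=\liminf_* H^k_\eps$ in the sense required by Theorem~\ref{thm:main.stability}.

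Next I would apply Theorem~\ref{thm:main.stability}(i) to deduce that $\underline U$ is a lsc supersolution of \HJBSD, and Theorem~\ref{thm:main.stability}(ii) to deduce that $\bar U$ is an usc subsolution of \HJBSD. For the initial data, the uniform estimate $|U_\eps(x,t)-g(x)|\leq Mt$ (with $M$ from \hyp{\BL}) together with the uniform continuity of $g$ yields $\bar U(\cdot,0)=\underline U(\cdot,0)=g$. Applying Theorem~\ref{thm:main} on $\R^N\times(0,T]$ with matching initial datum $g$ then gives $\bar U\leq\underline U$; combined with the reverse inequality, this forces $\bar U=\underline U=:U_\infty$, which is therefore continuous. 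The equality of the half-relaxed limits of a locally uniformly bounded sequence then implies $U_\eps\to U_\infty$ locally uniformly, and since $U_\infty$ is both a sub- and a supersolution of \HJBSD with datum $g$, the uniqueness statement of Theorem~\ref{thm:main} identifies it with $U$.

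The step I expect to be the main obstacle is verifying the hypotheses of Theorem~\ref{thm:main.stability}(ii): namely, the \NC and \TC constants for the rectified Hamiltonians $\tilde H^k_\eps$ must be uniform in $\eps$. This requires combining the $C^1$ convergence of the charts $\Psi^x_\eps$ from Definition~\ref{def:conv.strat} with uniform \TCBL and \NCBL bounds on $\BL_\eps$ (which should be read as implicit in the ``uniform constants'' assumption of the corollary). Once this uniformity is secured, the remainder of the argument reduces to standard applications of the stability and comparison machinery already proved.
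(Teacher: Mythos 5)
Your proposal is correct and follows essentially the same route as the paper: reduce to a fixed local \AFS via the charts, apply Lemma~\ref{lem:stab.bl.ham} to get convergence of the rectified Hamiltonians, invoke Theorem~\ref{thm:main.stability} to show the half-relaxed limits are sub/supersolutions, and conclude by comparison. You actually supply more detail than the paper's one-paragraph proof (the initial-data matching via $|U_\eps(x,t)-g(x)|\leq Mt+\omega_g(Mt)$, and the need for $\eps$-uniform \TC/\NC constants for the rectified Hamiltonians), and these added points are accurate.
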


\begin{proof}
    The proof is immediate: by the convergence of $\BL_\eps$ and $\M_\eps$,
    after a suitable change of variables we are reduced to considering the case
    of a constant local \AFS, $\M_\star$. Then we apply Lemma~\ref{lem:stab.bl.ham}
    which implies that the $(\tilde{H}^k_\eps)_k$ converge to the
    $(\tilde{H}^k)_k$. We invoke Theorem~\ref{thm:main.stability} which
    says that the half-relaxed limits of the $U_\eps$ are sub and
    supersolutions of the limit problem, \HJBSD. And finally, the comparison
    result implies that all the sequence converges to $U$.
\end{proof}

\section{Examples and extensions}

\subsection{Examples}

\noindent\textsc{Example 1:  a straight line in $\R^3$  ---} 
    \textit{This example is a typical example which is out of the scope of
        \cite{BBC1,BBC2} since the discontinuity set is not a
        $(N-1)$-dimensional manifold, but a lower dimensional one. We take the
        opportunity of this simple example to describe the way our assumption
        have to be read.}

We consider the line $\Gamma=\{x_1=x_2=0,
    x_3\in\R\}\subset\R^3$ and two bounded and continuous functions $(b,l)$ defined on
$(\R^3\setminus\Gamma)\times[0,\infty)\times A$ where $A$ is a control set.
We set as above $BL(x,t):=\{(b,l)(x,t,a):a\in A\}$ on
$(\R^3\setminus\Gamma)\times[0,\infty)\times A$ and 
$$\BL(x,t):=\begin{cases}
    BL(x,t) & \text{if }x\in\R^3\setminus\Gamma\,,\\
    \overline{\mathrm{co}}\Big(\limsup\limits_{y\to x\atop
        y\notin\Gamma}BL(x,t)\Big) & \text{if
    }x\in\Gamma\,.
\end{cases}$$
The natural stratification is simply $\Man{3}=\R^3\setminus\Gamma$,
$\Man{1}=\Gamma$ and $\Man{2}=\Man{0}=\emptyset$. An interesting point here is
the assumptions on $b,l$ which ensures \TCBL and \NCBL.

For \TCBL, the functions $b$ and $l$ have to be continuous in
$\R^3\setminus\Gamma \times [0,T] \times A$, $b$ being locally Lipschitz
continuous in $x$ with (locally) a uniform constant in $t$ and $a$. Of course,
they have to be bounded to have \hyp{\BL}. Moreover, in a neighborhood of each
point $(0,0,x_3)$, the functions $(x_3,t,a) \mapsto b((x_1, x_2,x_3), t,a)$ and
$(x_3,t,a) \mapsto l((x_1, x_2,x_3), t,a)$ are equicontinuous for $(x_1,x_2)$ in
a neighborhood of $(0,0)$ and, in the same way, the functions $x_3 \mapsto
b((x_1, x_2,x_3), t,a)$ are equi-Lipschitz continuous. In that way, if for any
sequence $(x^\e_1,x^\e_2)$ converging to $(0,0)$ such that
$$b((x^\e_1,x^\e_2,x_3), t,a)\to \bar b(x_3, t,a)\quad \hbox{ and }\quad
l((x^\e_1,x^\e_2,x_3), t,a)\to \bar l(x_3, t,a)\; ,$$ then $\bar b, \bar l$
satisfy classical assumptions, namely they are continuous and $\bar b$ is
locally Lipschitz continuous in $x_3$, uniform in $t$ and $a$. With this remark,
it is rather easy to show that $H^1$ defined on $\Gamma$ satisfies the right
continuity assumptions in $x_3$ and $t$.

In this example, it is clear that $x_3$ (and in a slightly different way $t$)
plays the role of the tangential derivatives while $(x_1,x_2)$ are the normal
ones.

For \NCBL, we write $b=(b_1, b_2,b_3)$ and the condition is that in a
neighborhood of each point $(0,0,x_3)$, there exists $\delta = \delta (x_3)$
such that $$ B(0,\delta) \subset \{(b_1,b_2)(x,t,a):a\in A\}\; ,$$
where $ B(0,\delta)$ is here a ball in $\R^2$.

Notice that, as we did it above in the checking of \TCBL, the dynamic and
cost on $\Gamma$ are obtained as the limits of the dynamic and cost on
$\R^3\setminus \Gamma$. But, of course, specific dynamic and cost can also
exist on $\Gamma$.

Under these conditions, we have a unique solution for \HJBSD.

\

\noindent \textsc{Example 2: the cross problem in $\R^2$ ---} 
    \textit{This example is another typical example which could not be treated in
        \cite{BBC1,BBC2}, with a more complex geometry: the discontinuity set
        contains an intersection of straight lines, that is, a point.}

In $\R^2$ we consider four domains as follows
$$\R^2=\big(\Omega_1\cup\Omega_2\cup\Omega_3\cup\Omega_4\big)\cup\Gamma\,,$$
where $\Gamma=\{x_1=0\}\cup\{x_2=0\}$ and each $\Omega_i$ is an open quadrant.
Then consider a control set $A$ and we 
assume that we have four vector fields $(b_i)_{i=1..4}$ and running costs
$(l_i)_{i=1..4}$, all bounded such that
$(b_i,l_i):\overline{\Omega_i}\times[0,\infty)\times A\to(\R^2\times\R)$
is continuous with respect to the first two variables, $(x,t)$.  

We then define the associated stratification $\Man{2}:=\cup_{i=1}^4\Omega_i$,
$\Man{1}:=\{x_1>0,x_2=0\}\cup\{x_1<0,x_2=0\}\cup\{x_1=0,x_2>0\}\cup\{x_1=0,x_2<0\}$
and finally $\Man{0}=\{(0,0)\}$. For $x\in\overline{\Omega_i}$, we set
$BL_i(x,t)= \big\{(b_i,l_i)(x,t,a): a\in A\big\}$ and finally
$$\BL(x,t):= \begin{cases}
    BL_i(x,t) & \text{if }x\in\Man{2}\,,\\
    \overline{\mathrm{co}}(BL_i(x,t)\cup BL_j(x,t)) & \text{if }x\in \Man{1}\\
    \overline{\mathrm{co}}(\cup_{i=1}^4 BL_i(0,t) & \text{if }x\in\Man{0}\,,
\end{cases}
$$
where of course the indices $i$ and $j$ are chosen accordingly to which portion
of $\Man{2}$ or $\Man{1}$ the point $x$ belongs to. With this setting we have a
\HJBSD which has a unique solution provided the assumptions on the $(b_i,l_i)$
are satisfied. These (local) conditions on $\Man{2},\Man{1}$ are analogous to
the ones described in Example 1. In a neighborhood of $(0,0)$, we need the
system to be fully controllable and the condition on $\Man{0}$ reduces to 
$$ u_t
\leq \inf\Big\{\sum_{i=1}^4 \mu_i l_i(0,t) : 
    \sum_{i=1}^4 \mu_i b_i(0,t) =0\Big\}.
$$

\

\noindent\textsc{Example 3: specific control problem on the discontinuity set ---}
 \textit{in this last example, we add specific control problems on the
        various submanifolds of positive codimension.}

We start from a continuous dynamic-cost map
$BL$ defined in $(\R^3\setminus\Gamma)\times[0,T]$, but we also put specific control
problems on $\Man{2}$, $\Man{1}$ and $\Man{0}$ according to the
stratification in $\R^3$ corresponding to Figure 1 (see Section \ref{sect:afs}). 

Hence, for $k=0,1,2$, we introduce a set-valued map $BL_k(\cdot,\cdot)$ which is
continuous on $\Man{k}\times[0,T]$. In order to have a global \HJBSD, we define
$\BL$ by setting 
$$\BL(x,t):=\begin{cases}
    BL(x,t) & \text{if }x\in\R^3\setminus(\Man{0}\cup\Man{1}\cup\Man{2})\,,\\
        \overline{\mathrm{co}}\Big(\limsup\limits_{y\to x\atop
        y\notin\Gamma} BL(x,t)\cup BL_k(x,t)\Big) & 
        \text{if }x\in\Man{k}\,,\ k=0,1,2\,.
\end{cases}
$$
The map ${\BL}$ satisfies \hyp{\BL}{} and provided each $BL_k$
and $BL$ satisfy \NCBL, we have an \HJBSD which has a unique solution.

\

\subsection{Applications \& Extensions}

\noindent\textsc{The Filippov approximation ---} a way to build a solution of
$u_t+H(x,u,Du)=0$ in $\R^N$ in presence of discontinuities consists in using the
Filippov approximation for the corresponding control problem: for each $\eps >0$ 
we consider 
$$\BL_\eps(x,t):=\overline{\mathrm{co}}\,
\bigg(\bigcup_{{|z-x|+|t-s|\leq\eps}}\Big(1-\frac{|z-x|}{\eps}-
\frac{|s-t|}{\eps}\Big)\,\BL(z,s)+
\Big(\frac{|z-x|}{\eps}+\frac{|s-t|}{\eps}\Big)\,\BL(x,t)\bigg)
\,.$$
The construction of $\BL_\eps$ comes from several considerations\\
$(i)$ for each $\eps >0$, $\BL_\eps$ is a continuous set-valued map with convex,
compact images;\\
$(ii)$ $\BL_\eps(x,t)$ also takes into account the specific dynamic-cost at
$(x,t)$;\\
$(iii)$ $\BL_\eps(x,t)$ takes into account dynamics-costs coming from a
neighborhood of $(x,t)$.

Notice first that by construction, $\BL_\eps$ is a continuous set-valued map
which satisfies \hyp{\BL}{} and \NCBL,\TCBL. Therefore there exists a unique solution
$U_\eps$ of $\HJBSD_\eps$, associated to $\BL_\eps$.

Since $\BL_\eps$ is continuous, if $\M$ is a stratification adapted to $\BL$, it
can be seen as a stratification adapted also to $\BL_\eps$, for any $\eps$,
even if there is no discontinuity for $\BL_\eps$.  Thus, $\BL_\eps\toRS\BL$ and
the stability result (Corollary \ref{cor:stability} yields that $U_\eps$
converges to the unique solution of \HJBSD).  This result extends \cite[Thm.
6.1]{BBC2} where the convergence of Filippov's approximation was proved for an
$(N-1)$-dimensional discontinuity set.

\

\noindent\textsc{Infinite horizon problems ---} we derived a complete study of
parabolic \HJBSD which correspond to finite horizon control problems. In the same way,
we can handle similarly the case of infinite horizon problems, leading to
stationary \HJBSD as in \cite{BBC1}.

This amounts to considering a set-valued map $x\mapsto \BL(x)$ and introduce the 
Hamiltonians $H^k(x,u,p)=\sup(\lambda u -p\cdot b-l)$, where the supremum is
taken over $\BL(x)$, with $b\in T_x\Man{k}$. The adaptations are quite
straightforward: under \TCBL,\NCBL (which have to be considered as independent
of $t$) we get comparison for the complemented problem; and the value function
of the associated control problem is the unique viscosity solution of this
complemented problem.

\

\noindent\textsc{Time-depending stratifications ---} throughout this paper, we
assumed that the discontinuities of the set-valued map $\BL(\cdot,\cdot)$ is
independent of the time-variable. This is a simplification which can be relaxed
at (almost) no cost in some situations: following the ideas of the stability
result, we assume that  for each $t>0$ we have a stratification $\M(t)$ adapted
to $\BL(\cdot,t)$ with the following property

\noindent \textit{for each $(x,t)\in\R^N \times [0,T]$, there exists $r>0$, an \AFS $\M^\star$ in
    $\R^N$ and a local change of coordinates $\Psi^{(x,t)}: B(x,r)\times (-r,r)
    \to \R^N \times \R$ as in Definition~\ref{def:RS} such that
    $$\Psi^{(x,t)}\Big(\Man{k}\cap \big(B(x,r)\times(-r,r)\big)\Big)=
    (\M^\star \times \R)\cap \Psi^{(x,t)}\Big(B(x,r)\times (-r,r)\Big)\,.$$}

    This means that, up to the local changes of variables $\Psi^{(x,t)}$, we are in a
flat and time-independent situation. All the constructions and results that we
derived thus apply with slight modifications. Notice that of course the
dependance of $\Psi^{(x,t)}$ with respect to the time variable should be regular
enough so that the rectified equation keeps the suitable properties \TC,\NC,\LP,
\textit{i.e.} $C^1$ in the $t$-variable, and $W^{2,\infty}$ or $C^1$ in the
$x$-variable (depending on the controllability assumptions).

\

\noindent\textsc{A word on the maximal solution ---} by focusing on the
complemented \HJBSD problem, the unique solution we select is the minimal
solution of $u_t+H(x,t,Du)=0$ in $\Man{N}$, complemented with the Ishii
conditions on $\Gamma=\Man{N-1}\cup\cdots\cup\Man{0}$. This solution is denoted
by $\mathbb{U}^-$ in \cite{BBC1,BBC2}.

The maximal solution, $\mathbb{U}^+$, was identified in \cite{BBC1,BBC2} but
only in the specific case of a $(N-1)$-dimensional discontinuity
set: $\Gamma=\Man{N-1}$, \textit{i.e.} $\Man{k}=\emptyset$ for any $k=0..(N-2)$.
The reason is that the identification of $\mathbb{U}^+$ through a suitable
control problem (involving only ``regular controls'') requires a reflection-type
argument on $\Gamma$. Thus, the problem is linked to the very definition of this
maximal solution and in the context of general HJB problems on stratified
domains, the methods used in \cite{BBC1,BBC2} do not seem to be adaptable
(except in special cases). This is to our point of view a very interesting
problem to identify this maximal solution in the general case (at least in a
framework as general as possible).

\thebibliography{99}

\bibitem{AF} J-P. Aubin and H. Frankowska, Set-valued analysis. Systems \&
  Control: Foundations \& Applications, 2. Birkh\"auser Boston, Inc., Boston,
  MA, 1990.

\bibitem{ACCT} Y. Achdou, F. Camilli, A. Cutri, N. Tchou, Hamilton-Jacobi equations constrained on networks,  NoDea Nonlinear Differential Equations Appl.  20 (2013), 413--445.

\bibitem{AMV} Adimurthi, S. Mishra and G. D. Veerappa Gowda, Explicit Hopf-Lax
  type formulas for Hamilton-Jacobi equations and conservation laws with
  discontinuous coefficients. (English summary) J. Differential Equations 241
  (2007), no. 1, 1-31.

\bibitem{BCD} M. Bardi, I. Capuzzo Dolcetta, {Optimal control and viscosity
  solutions of Hamilton-Jacobi- Bellman equations}, Systems \& Control:
  Foundations \& Applications, Birkhauser Boston Inc., Boston, MA, 1997.

\bibitem{Ba} G. Barles,  {Solutions de viscosit\'e des  \'equations de
  Hamilton-Jacobi}, Springer-Verlag, Paris, 1994.

\bibitem{BBC1} G. Barles, A. Briani and E. Chasseigne, \textit{A Bellman
  approach for two-domains optimal control problems in $\R^N$},  ESAIM COCV, 19 (2013), 710-739.
 
\bibitem{BBC2} G. Barles, A. Briani and E. Chasseigne, \textit{A Bellman approach for regional optimal control problems in $\R^N$},  SIAM J. Control Optim. 52 (2014), no. 3, 1712–1744.

\bibitem{BBCT} G. Barles, A. Briani,  E. Chasseigne and N. Tchou \textit{Homogenization Results for a Deterministic Multi-domains Periodic Control Problem}, in preparation.

\bibitem{BJ:Rate} G.~Barles and E.~R.~Jakobsen.  \newblock On the convergence
  rate of approximation schemes for Hamilton-Jacobi-Bellman equations.
  \newblock {\em M2AN Math. Model. Numer. Anal.} 36(1):33--54, 2002.

\bibitem{BP2} G. Barles and B. Perthame: {\sl Exit time problems in optimal
  control and vanishing viscosity method.} SIAM J. in Control and Optimisation,
  26, 1988, pp. 1133-1148.
  
 \bibitem{BaWo} R. Barnard and P. Wolenski: {\sl Flow Invariance on Stratified Domains.} Preprint (arXiv:1208.4742).

\bibitem{Bl1} A-P. Blanc, Deterministic exit time control problems with
  discontinuous exit costs. SIAM J. Control Optim. 35 (1997), no. 2, 399--434.

\bibitem{Bl2} A-P. Blanc, Comparison principle for the Cauchy problem for
  Hamilton-Jacobi equations with discontinuous data. Nonlinear Anal. 45 (2001),
  no. 8, Ser. A: Theory Methods, 1015--1037.

\bibitem{BH} A. Bressan and Y. Hong, {Optimal control problems on
  stratified domains}, Netw. Heterog. Media 2 (2007), no. 2, 313-331
  (electronic) and Errata corrige: "Optimal control problems on stratified domains''. Netw. Heterog. Media 8 (2013), no. 2, 625.

\bibitem{ScCa} F.~Camilli and 
D.~Schieborn: Viscosity solutions of Eikonal equations on topological networks,  Calc. Var. Partial Differential Equations, 46 (2013), no.3,   671--686.

\bibitem{CaMaSc} F.~Camilli, C. Marchi and 
D.~Schieborn:
Eikonal equations on ramified spaces. Interfaces Free Bound. 15 (2013), no. 1, 121–140. 

\bibitem{CaSo} F Camilli and A. Siconolfi, {Time-dependent measurable
  Hamilton-Jacobi equations}, Comm. in Par. Diff. Eq. 30 (2005), 813-847.

\bibitem{Clarke} F.H. Clarke,  Optimization and nonsmooth analysis, Society of Industrial Mathematics, 1990.  

\bibitem{CR} G. Coclite and N. Risebro, Viscosity solutions of Hamilton-Jacobi
  equations with discontinuous coefficients.  J. Hyperbolic Differ. Equ. 4
  (2007), no. 4, 771--795.

\bibitem{DeZS} C. De Zan and P. Soravia, Cauchy problems for noncoercive
  Hamilton-Jacobi-Isaacs equations with discontinuous coefficients.  Interfaces
  Free Bound. 12 (2010), no. 3, 347--368.

\bibitem{DE} K. Deckelnick and C. Elliott, Uniqueness and error analysis for
  Hamilton-Jacobi equations with discontinuities.  Interfaces Free Bound. 6
  (2004), no. 3, 329--349.

\bibitem{Du} P. Dupuis, A numerical method for a calculus of variations problem
  with discontinuous integrand. Applied stochastic analysis (New Brunswick, NJ,
  1991), 90--107, Lecture Notes in Control and Inform. Sci., 177, Springer,
  Berlin, 1992.

\bibitem{Fi} A.F. Filippov, {Differential equations with discontinuous
  right-hand side}. Matematicheskii Sbornik,  51  (1960), pp. 99--128.  American
  Mathematical Society Translations,  Vol. 42  (1964), pp. 199--231 English
  translation Series 2.

\bibitem{fs} W.H.\ Fleming, H.M.  \ Soner, Controlled Markov
Processes and Viscosity Solutions, {Applications of Mathematics,}
Springer-Verlag, New York, 1993.

\bibitem{GS1} M. Garavello and P. Soravia, Optimality principles and uniqueness
  for Bellman equations of unbounded control problems with discontinuous running
  cost. NoDEA Nonlinear Differential Equations Appl. 11 (2004), no. 3, 271-298.

\bibitem{GS2} M. Garavello and P. Soravia, Representation formulas for solutions
  of the HJI equations with discontinuous coefficients and existence of value in
  differential games.  J. Optim. Theory Appl. 130 (2006), no. 2, 209-229.

\bibitem{GGR} Y. Giga, P. G\`orka and P. Rybka, A comparison principle for
  Hamilton-Jacobi equations with discontinuous Hamiltonians. Proc. Amer. Math.
  Soc. 139 (2011), no. 5, 1777-1785.

\bibitem{IMZ} C. Imbert, R. Monneau, and H. Zidani : {A Hamilton-Jacobi approach to junction problems and application to traffic flows}. ESAIM: Control, Optimisation, and Calculus of Variations; DOI 10.1051/cocv/2012002, vol. 19(01), pp. 129--166, 2013.

\bibitem{IM-ND} C. Imbert and R. Monneau: {Quasi-convex Hamilton-Jacobi
    equations posed on junctions: the multi-dimensional case}, HAL (2014). 

\bibitem{IM} C. Imbert and R. Monneau : {Flux-limited solutions for quasi-convex
    Hamilton-Jacobi equations on networks}. Preprint (2014).

\bibitem{Idef} H. Ishii: 
Hamilton-Jacobi Equations with discontinuous Hamiltonians on arbitrary open
sets. Bull. Fac. Sci. Eng. Chuo Univ. {\bf 28} (1985), pp~33-77.

\bibitem{IPer}H. Ishii:
{Perron's method for Hamilton-Jacobi
Equations.} Duke Math.  J. {\bf 55} (1987), pp~369-384.

\bibitem{L} Lions P.L. (1982) {Generalized Solutions of Hamilton-Jacobi
  Equations}, Research Notes in Mathematics 69, Pitman, Boston.

\bibitem{RaZi} Z. Rao and H. Zidani: { Hamilton-Jacobi-Bellman Equations on Multi-Domains}. Control and Optimization with PDE Constraints, International Series of Numerical Mathematics, vol. 164, BirkhÃ€user Basel, 2013.

\bibitem{RaSiZi} Z. Rao, A. Siconolfi and H. Zidani: Transmission conditions on interfaces for Hamilton-Jacobi-Bellman equations.
J. Differential Equations, vol. 257(11), pp. 3978--4014, 2014   

\bibitem{R} R.T.~Rockafellar, Convex analysis.  Princeton Mathematical Series,
  No. 28 Princeton University Press, Princeton, N.J. 1970 xviii+451 pp.

\bibitem{Son} H.M. Soner, { Optimal control with state-space constraint} I,
  SIAM J. Control Optim. 24 (1986), no. 3, 552-561.

\bibitem{So} P. Soravia, { Degenerate eikonal equations with discontinuous
  refraction index}, ESAIM Control Op- tim. Calc. Var. 12 (2006).

\bibitem{Wa} T. Wasewski,  Syst\`emes de commande et \'equation au contingent,
  Bull. Acad. Pol. Sc., 9, 151-155, 1961.

  \bibitem{W1} H. Whitney, Tangents to an analytic variety, 
  Annals of Mathematics 81, no. 3, pp. 496--549, 1965.

  \bibitem{W2} H. Whitney, Local properties of analytic varieties. Differential
  and Combinatorial Topology (A Symposium in Honor of Marston Morse) pp.
  205--244, Princeton Univ. Press, Princeton, N. J., 1965.

\end{document}